\newtheorem{thm}{Theorem}[section]
\newtheorem*{thma}{Theorem A}
\newtheorem*{thmb}{Theorem B}
\newtheorem*{thmc}{Theorem C}
\newtheorem*{thmd}{Theorem D}
\newtheorem{cor}[thm]{Corollary}
\newtheorem{lem}[thm]{Lemma}
\newtheorem{prop}[thm]{Proposition}
\theoremstyle{definition}
\newtheorem{defn}[thm]{Definition}
\theoremstyle{remark}
\newtheorem{rem}[thm]{Remark}
\numberwithin{equation}{section}
\begin{document}

\title[CR Yamabe equation on the Heisenberg group]{CR Yamabe Equation on the Heisenberg Group\\
via the method of moving spheres}
\author{Congwen Liu}
\address{School of Mathematical Sciences, University of Science and Technology of China, Hefei, Anhui 230026, China}
\email {cwliu@ustc.edu.cn}
\thanks{
This work was supported by the National Natural Science Foundation of China grant 12371084.}

\subjclass[2020]{35J61 (Primary), 32V20, 35B33  (Secondary).}
\keywords{Heisenberg group, CR Yamabe equation, Liouville theorem, Moving-sphere method}

\date{}

\begin{abstract}
In this paper, we classify positive solutions to the CR Yamabe equation on the Heisenberg group. We show that all such solutions are Jerison-Lee bubbles, without imposing any finite-energy or a priori symmetry assumptions.
This result can be regarded as an analogue for $\mathbb{H}^n$ of the celebrated Caffarelli-Gidas-Spruck classification theorem in $\mathbb{R}^n$. To establish this, we develop a systematic approach to implement the method of moving spheres in the setting of the Heisenberg group.
\end{abstract}

\maketitle

\section{Introduction}
\label{ch:intro}

In this paper,  we consider positive solutions to the CR Yamabe equation
\begin{equation} \label{eqn:cr_yamabe}
-\Delta_{\! H} u = u^{\frac{Q+2}{Q-2}} \qquad \text{in } \mathbb{H}^n,
\end{equation}
where $\mathbb{H}^n$ is the Heisenberg group,  $u$ is a smooth, real and positive function defined in $\mathbb{H}^n$,  $\Delta_{\! H} u$ is the sub-Lapacian of $u$ (see the definition in Section 2) and
$Q:=2n+2$ is the homogeneous dimension of $\mathbb{H}^n$.

The equation \eqref{eqn:cr_yamabe} is closely related to the CR Yamabe problem in $\mathbb{H}^n$,
which states that: find a choice of contact form on $\mathbb{H}^n$ for which the pseudo-Hermitian 
scalar curvature is constant. Indeed, given the standard contact form $\Theta$ on $\mathbb{H}^n$, 
consider the conformal contact form $\theta=u^{\frac{2}{n}}\Theta$, then the pseudo-Hermitian 
scalar curvature associated to $\theta$ is the positive constant $R\equiv 4n(n+1)$ if and only if 
$u$ solves the equation
\begin{equation}\label{eqn:cr_yamabe2}
-\Delta_{\! H} u = 2n^2u^{\frac{Q+2}{Q-2}}. 
\end{equation}

In their celebrated paper \cite{JL88}, Jerison and Lee established the following classification of positive finite-energy solutions of \eqref{eqn:cr_yamabe}.

\begin{thma}[Jerison, Lee] \label{thm:JL}
Let $u \in C^2 \cap L^{\frac{2Q}{Q-2}}(\mathbb{H}^n)$ be a positive solution of \eqref{eqn:cr_yamabe}. Then
$u$ is of the form
\begin{equation}\label{eqn:JLbubble}
u(z,t) = K\bigl| t + i|z|^2 + \mu \cdot z + \kappa \bigr|^{-\frac{Q-2}{2}},
\end{equation}
where $K>0$, $\mu\in\mathbb{C}^n$, $\kappa\in\mathbb{C}$ with $\operatorname{Im}\kappa > \frac {|\mu|^2}{4}$.
\end{thma}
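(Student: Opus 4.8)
The plan is to reduce Theorem~A to a rigidity statement on the compact CR sphere $S^{2n+1}$, where the finite-energy hypothesis can be exploited cleanly, and then to run the Jerison--Lee integral identity there. First I would use the conformal covariance of the sub-Laplacian under the Cayley transform $\mathcal C\colon\mathbb H^n\to S^{2n+1}\setminus\{\zeta_0\}$: with $J$ the associated Jacobian-type factor, $v:=J\cdot(u\circ\mathcal C^{-1})$ solves the round constant-scalar-curvature equation $\mathcal L_{\Theta_S}v=c\,v^{(Q+2)/(Q-2)}$ on $S^{2n+1}\setminus\{\zeta_0\}$, where $\mathcal L_{\Theta_S}$ is the CR Yamabe operator of the standard structure. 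Since $u\in L^{2Q/(Q-2)}(\mathbb H^n)$, the transplanted solution $v$ is in $L^{2Q/(Q-2)}$ near $\zeta_0$; as a point has zero capacity, a standard subelliptic removable-singularity argument (test the equation against cutoffs, use the Folland--Stein embedding, and bootstrap) promotes $v$ to a positive smooth solution on all of $S^{2n+1}$. This is the one place the finite-energy assumption is genuinely needed.

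Next, writing $\tilde\theta=v^{2/n}\Theta_S$ one obtains a pseudo-Hermitian structure on $S^{2n+1}$ of constant scalar curvature, and the goal becomes to show it is CR-equivalent, up to a conformal factor, to the round one. Following Jerison--Lee, I would introduce the tensor $D_{\alpha\beta}$ equal to the trace-free symmetric $(2,0)$-part of the second covariant derivative $f_{,\alpha\beta}$ of $f:=v^{-4/(Q-2)}$ (the reciprocal of the conformal factor) with respect to the Tanaka--Webster connection of the round structure, and derive a divergence identity of the schematic form $\operatorname{div}(V)=|D|^2\cdot(\text{positive weight})+(\text{terms that vanish by the equation})$, obtained by repeatedly commuting covariant derivatives — carefully tracking the torsion of the Webster connection — and substituting the PDE into the top-order terms. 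Integrating over the closed manifold $S^{2n+1}$ kills the divergence and forces $\int_{S^{2n+1}}|D|^2(\cdots)\,dV=0$, hence $D\equiv0$.

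The vanishing of $D$ is an overdetermined system asserting that the horizontal Hessian of $f$ is pure trace; combined with the equation this pins down the trace and yields a rigid explicit form for $f$. Transporting back through $\mathcal C$ and absorbing the CR-automorphism ambiguity, the original solution satisfies $Z_\alpha Z_\beta\bigl(u^{-4/(Q-2)}\bigr)=0$ for the standard $(1,0)$ frame $Z_\alpha=\partial_{z_\alpha}+i\bar z_\alpha\partial_t$, which — using also the structure relation $[Z_\alpha,\bar Z_\beta]=-2i\,\delta_{\alpha\beta}\,\partial_t$ and the equation — integrates to $u^{-4/(Q-2)}=c\,\bigl|\,t+i|z|^2+\mu\cdot z+\kappa\,\bigr|^{2}$, i.e.\ precisely \eqref{eqn:JLbubble}; positivity of $u$ on all of $\mathbb H^n$ forces $\operatorname{Im}\kappa>|\mu|^2/4$. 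Equivalently, one may check directly that \eqref{eqn:JLbubble} is exactly the orbit of the standard bubble $u_0=c\,(t^2+(1+|z|^2)^2)^{-(Q-2)/4}$ under Heisenberg dilations, left translations, unitary rotations of $z$, and the Kor\'anyi inversion — which together generate the full CR conformal group of $\mathbb H^n$ — so the list is complete and intrinsically CR-invariant.

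The main obstacle is the integral identity in the second step: choosing the correct tensor $D$ and the correct power of $v$, and verifying that after integration by parts on the closed sphere every term regroups into a manifestly signed quantity. Since the Webster connection carries torsion, commuting derivatives spawns numerous curvature terms, and organizing their cancellation is the real technical heart of the argument (this is the Jerison--Lee computation). One alternative I would consider is to run the same identity directly on $\mathbb H^n$, bypassing the sphere; but this needs a weighted version together with decay estimates for $u$ — again coming from finite energy — in order to discard the boundary terms at infinity, which is precisely the limitation that the moving-sphere method developed in this paper is designed to remove.
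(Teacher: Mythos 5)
Your proposal is essentially a reconstruction of the original Jerison--Lee argument, and it is a viable route to Theorem A; but it is a genuinely different route from the one this paper takes. The paper never reproves Theorem A by the sphere/identity method: it proves the stronger Theorem \ref{thm:main} (no finite-energy or decay hypothesis) by the method of moving spheres, built on the generalized CR inversions $\Phi_{\xi,\lambda}^{\beta}$ of Definition \ref{def:gndcrinv}, the two calculus lemmas of Li--Nirenberg--Zhu type (Theorems \ref{thm:lnzlem1} and \ref{thm:lnzlem2}), and the Terracini-type integral inequality of Lemma \ref{lem:terracini} in place of the maximum principle; Theorem A then follows as a special case. What your approach buys is a clean use of compactness: after the Cayley transform and a removable-singularity step (this is exactly where $u\in L^{2Q/(Q-2)}$ enters), integration of the divergence identity over the closed sphere kills all boundary terms. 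What the paper's approach buys is precisely the removal of that hypothesis: no transplantation to the sphere, no decay at infinity needed to start, since the comparison is with the generalized Kelvin transform on balls and the calculus lemmas convert the sphere-stopping alternative directly into the explicit bubble profile.

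Two caveats on your write-up. First, the entire weight of your argument rests on the divergence identity $\operatorname{div}(V)=|D|^2(\cdots)+\cdots$, which you invoke but do not derive; as you yourself note, organizing the torsion and curvature terms so that the right-hand side is signed is the real content of Jerison--Lee, so as written the proposal is a roadmap that is only as complete as that computation (and the final passage from $D\equiv 0$ to $Z_\alpha Z_\beta\bigl(u^{-4/(Q-2)}\bigr)=0$ needs the accompanying rigidity analysis, not just the schematic statement). Second, in the removable-singularity step you should be explicit that conformal invariance of the $L^{2Q/(Q-2)}$ norm gives $v\in L^{2Q/(Q-2)}$ near $\zeta_0$, that zero capacity only yields that $v$ solves the equation distributionally across $\zeta_0$, and that boundedness then comes from a Brezis--Kato/Moser iteration adapted to the subelliptic setting via the Folland--Stein inequality; this is standard but is a genuine step, not automatic. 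With those two items filled in, your argument is the classical proof of Theorem A; it does not, however, touch the paper's actual contribution, which is dispensing with the finite-energy assumption altogether.
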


The functions in \eqref{eqn:JLbubble} are usually called \emph{Jerison–Lee bubbles}; they are the only extremals of 
the sharp Sobolev inequality on $\mathbb{H}^n$ and play the same role as the Aubin–Talenti bubbles in the original 
Yamabe problem. 

It has been a long‑standing open question whether the finite-energy assumption $u \in L^{\frac{2Q}{Q-2}}(\mathbb{H}^n)$
can be removed. Little progress has been made over decades, we only refer to \cite{GV01} where the authors 
obtained a Liouville result under the assumption of cylindrical symmetry on groups of Heisenberg type.

A major breakthrough was recently achieved by Catino, Li, Monticelli and Roncoroni \cite{CLMR23}, who provided a 
complete answer in the one-dimensional case and made significant progress in higher dimensions.

\begin{thmb}[Catino, Li, Monticelli, Roncoroni] \label{thm:CLMR}
\begin{enumerate}
\item[(i)] Let $u$ be a positive solution to \eqref{eqn:cr_yamabe} in $\mathbb{H}^1$. Then $u$ is a Jerison–Lee bubble.
\item[(ii)] Let $u$ be a positive solution to \eqref{eqn:cr_yamabe} in $\mathbb{H}^n$, $n\geq 2$ such that
\[
u(\xi)\leq\frac{C}{1+|\xi|^\frac{Q-2}{2}}\quad\forall\xi\in \mathbb{H}^n,
\]
for some $C>0$. Then $u$ is a Jerison–Lee bubble.
\end{enumerate}
\end{thmb}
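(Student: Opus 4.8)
\emph{Overall strategy.} I would prove the full classification (which in particular contains Theorem~B) by running the \emph{method of moving spheres} directly on $u$, following the Caffarelli--Gidas--Spruck and Li--Zhang treatment of $-\Delta u=u^{(n+2)/(n-2)}$ in $\mathbb{R}^n$ but transplanted to the subelliptic setting. The basic device is the Heisenberg analogue of the Kelvin transform: using the CR (Kor\'anyi) inversion, a conformal automorphism of $\mathbb{H}^n$ which interchanges the superlevel sets of Folland's fundamental solution $\Gamma$ of $-\Delta_H$, one attaches to each center $\xi_0\in\mathbb{H}^n$ and radius $\lambda>0$ a ``reflected'' function $u_{\xi_0,\lambda}$, obtained by composing $u$ with the inversion $I_{\xi_0,\lambda}$ of center $\xi_0$ and radius $\lambda$ and multiplying by the induced conformal factor. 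By the conformal covariance of the CR Yamabe operator, $u_{\xi_0,\lambda}$ again solves \eqref{eqn:cr_yamabe} on the image domain, and $u_{\xi_0,\lambda}=u$ on the gauge-type sphere $\partial B_{\xi_0,\lambda}$ of that radius; hence $w_{\xi_0,\lambda}:=u-u_{\xi_0,\lambda}$ satisfies a linear equation $-\Delta_H w_{\xi_0,\lambda}=c_{\xi_0,\lambda}\,w_{\xi_0,\lambda}$ with $c_{\xi_0,\lambda}\ge 0$ (mean value theorem, $u>0$). Since this coefficient has the ``wrong'' sign for a direct comparison, on the exterior region $\mathbb{H}^n\setminus\overline{B_{\xi_0,\lambda}}$ one must use the decay $c_{\xi_0,\lambda}(\xi)\lesssim|\xi|^{-4}$ together with a Heisenberg Hardy-type inequality to make the strong maximum principle applicable, and already here the degeneracy of the Hardy weight along the characteristic variety intrudes.

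\emph{Preliminary asymptotics at infinity.} The genuinely hard analytic input, needed before any sphere can be moved, is the a priori bound $u(\xi)\le C(1+|\xi|)^{-(Q-2)}$ (with $u$ globally bounded); equivalently, the Kelvin transform of $u$ about a fixed point extends past the puncture as a solution with at worst a removable singularity. I would establish this by the Caffarelli--Gidas--Spruck scheme: invert $u$ about a point, note the transformed function solves \eqref{eqn:cr_yamabe} on $\mathbb{H}^n\setminus\{0\}$, and analyze the isolated singularity via a subelliptic Harnack inequality and an analysis of averages over gauge spheres, showing it is removable (or, failing that, identifying the singular profile and excluding it by positivity). This is precisely the step at which \cite{CLMR23} had to take $n=1$ (their part (i)) or impose the decay hypothesis (their part (ii)), and I expect making it unconditional in every dimension to be the \textbf{main obstacle}: the Heisenberg Harnack inequality and the classification of isolated singularities near the characteristic points of gauge spheres are substantially more delicate than in $\mathbb{R}^n$.

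\emph{Moving the spheres.} Granted the asymptotics, smoothness and positivity of $u$ together with an interior gradient bound near $\xi_0$ yield $w_{\xi_0,\lambda}\ge 0$ on all of $\mathbb{H}^n\setminus B_{\xi_0,\lambda}$ for every small $\lambda>0$, and one sets $\bar\lambda(\xi_0):=\sup\{\lambda>0:\ w_{\xi_0,\mu}\ge 0 \text{ on } \mathbb{H}^n\setminus B_{\xi_0,\mu}\ \text{for all } 0<\mu\le\lambda\}$. If $\bar\lambda(\xi_0)=+\infty$ for every $\xi_0$, then letting $\lambda\to\infty$ forces enough lower bounds on $u$ near infinity, from every center, that a calculus argument yields $u\equiv\text{const}$ — impossible, since no positive constant solves \eqref{eqn:cr_yamabe}. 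Hence $\bar\lambda(\xi_0)<\infty$ for some $\xi_0$; at the critical radius, the strong maximum principle and a Hopf-type boundary lemma (which must be proved in a form valid near the characteristic points of $\partial B_{\xi_0,\bar\lambda(\xi_0)}$) give $w_{\xi_0,\bar\lambda(\xi_0)}\equiv 0$, i.e. $u_{\xi_0,\bar\lambda(\xi_0)}\equiv u$. This identity pins down the sharp decay $u(\xi)\asymp|\xi|^{-(Q-2)}$, and a second round of the procedure upgrades it to: $\bar\lambda(\xi_0)<\infty$ and $u_{\xi_0,\bar\lambda(\xi_0)}\equiv u$ for \emph{every} $\xi_0\in\mathbb{H}^n$.

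\emph{Rigidity and conclusion.} Finally, a purely algebraic lemma on $\mathbb{H}^n$ — the exact analogue of the Li--Zhang calculus lemma — asserts that any positive $u$ with $u_{\xi_0,\lambda(\xi_0)}\equiv u$ for all $\xi_0$ and some $\lambda(\xi_0)>0$ must have the form \eqref{eqn:JLbubble}, the parameters $K,\mu,\kappa$ and the constraint $\operatorname{Im}\kappa>|\mu|^2/4$ dropping out of the explicit formula for the CR inversion and the positivity of $u$. This finishes the proof; since the decay hypothesis and the restriction $n=1$ entered only in the preliminary asymptotics, the argument in particular reproves Theorem~B without either of them.
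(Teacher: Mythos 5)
Your proposal does not prove the statement: the step you yourself flag as the ``main obstacle'' --- the preliminary asymptotics $u(\xi)\le C(1+|\xi|_H)^{-(Q-2)}$ via Kelvin transform plus removable-singularity/Harnack analysis --- is exactly the missing ingredient, and you leave it unestablished. Without it your scheme delivers neither part (i) (where no decay is assumed) nor, as written, part (ii) (where only the weaker decay of order $|\xi|_H^{-(Q-2)/2}$ is hypothesized, while your moving-sphere start and critical-radius analysis are built on the sharp rate). Two further steps you treat as routine are genuinely problematic in $\mathbb{H}^n$. First, the pointwise comparison machinery you invoke --- a Hopf-type boundary lemma on Kor\'anyi spheres and maximum principles in exterior regions with a Hardy-weight zeroth-order term --- is precisely what breaks down at characteristic points; this is why earlier moving-plane attempts (e.g.\ \cite{BP99,GV01}) needed extra symmetry hypotheses, and you give no construction that repairs it. Second, your ``purely algebraic'' rigidity lemma is not algebraic: the Heisenberg analogue of the Li--Zhu/Li--Zhang calculus lemmas is a substantial result, and it only holds for the \emph{generalized} CR inversion $\Phi_{\xi,\lambda}^{\beta}$ of Definition \ref{def:gndcrinv}, whose extra unitary factor $M_{\xi,\beta}$ is what makes the bubble family invariant (Proposition \ref{prop:justifydefn}); the naive composition of translation, CR inversion and dilation that you describe does not have this property, so the final classification step would not close as stated.

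For contrast, the paper proves the stronger Theorem \ref{thm:main} (hence Theorem B, which it otherwise only cites from \cite{CLMR23}, where it is proved by Jerison--Lee-type differential identities and integral estimates) by a route that avoids both of your sticking points: the moving spheres are started and re-started using the Terracini-type integral inequality of Lemma \ref{lem:terracini} together with the Sobolev inequality, so no a priori decay, no exterior maximum principle and no Hopf lemma at characteristic points are needed (Bony's strong maximum principle is used only inside the ball at the critical radius); the conclusion is then drawn from the calculus lemmas, Theorems \ref{thm:lnzlem1} and \ref{thm:lnzlem2}, whose proofs occupy Sections \ref{sec:PfofLNZ1}--\ref{sec:PfofLNZ2} and rest on the correctly normalized inversion $\Phi_{\xi,\lambda}^{\beta}$. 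If you want to salvage your outline, the integral-inequality replacement for the maximum principle and the construction of $\Phi_{\xi,\lambda}^{\beta}$ are the two components you would have to supply; as it stands, the proposal has a genuine gap at each of them. (Also note a small but real error: $u_{\xi,\lambda}^{\beta}$ does not coincide with $u$ pointwise on $\partial B_{\lambda}(\xi)$, since $\Phi_{\xi,\lambda}^{\beta}$ maps the sphere to itself but not identically; the boundary identity in the paper is instead deduced from the two-sided inequalities by continuity.)
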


Flynn and V\'etois \cite{FV23} made a further development for higher dimensions:

\begin{thmc}[Flynn, V\'etois] \label{thm:FV}
Let $n\ge2$ and $u$ be a positive solution to \eqref{eqn:cr_yamabe} such that
\begin{equation}\label{Th1Eq3}
u(z,t)\leq C \left(\left|z\right|^2+\left|t\right|\right)^{-\frac{n-2}{2}} 
\qquad \forall (z,t)\in\mathbb{H}^n\backslash\left\{(0,0)\right\},
\end{equation}
for some constant $C>0$. Then \(u\) is a Jerison–Lee bubble.
\end{thmc}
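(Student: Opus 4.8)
\medskip\noindent\textbf{Proof proposal.}
The plan is to prove Theorem C by the method of moving spheres on $\mathbb{H}^n$, using the decay bound \eqref{Th1Eq3} only to launch the procedure. Write $d$ for the Kor\'anyi gauge distance, $B_r(\xi_0)=\{\,d(\xi,\xi_0)<r\,\}$ for the gauge balls, and $\Gamma_H$ for the fundamental solution of $-\Delta_H$, so that $\Gamma_H(\xi^{-1}\eta)\asymp d(\xi,\eta)^{-(Q-2)}$ and, for $\xi=(z,t)$, $d(\xi,0)^2\asymp|z|^2+|t|$; thus \eqref{Th1Eq3} reads $u(\xi)\le C\,d(\xi,0)^{-(n-2)}$. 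For $\xi_0\in\mathbb{H}^n$ and $\lambda>0$ let $u_{\xi_0,\lambda}$ be the conformal inversion of $u$ about the gauge sphere $\partial B_\lambda(\xi_0)$ --- that is, $u$ composed with the Kor\'anyi inversion centred at $\xi_0$ of scale $\lambda$ (a CR automorphism of $\mathbb{H}^n\cup\{\infty\}$, coming from the action of $PU(n+1,1)$ on $S^{2n+1}$), multiplied by the attendant conformal factor, which is $1$ on $\partial B_\lambda(\xi_0)$ and behaves like $(\lambda/d(\xi,\xi_0))^{Q-2}$ as $d(\xi,\xi_0)\to\infty$. Since $\Delta_H$ is covariant under such maps with the weight matching $\tfrac{Q+2}{Q-2}$, each $u_{\xi_0,\lambda}$ is again a positive solution of \eqref{eqn:cr_yamabe} on $\mathbb{H}^n\setminus\{\xi_0\}$, and we may set
\[
\bar\lambda(\xi_0):=\sup\bigl\{\,\lambda>0:\ u_{\xi_0,\mu}\le u\ \text{on}\ \mathbb{H}^n\setminus B_\mu(\xi_0)\ \text{for every}\ 0<\mu\le\lambda\,\bigr\}\in(0,+\infty].
\]

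\medskip\noindent I would then argue in four steps. \textbf{(i) Launching:} $\bar\lambda(\xi_0)>0$. For $\mu$ small one checks directly that $u_{\xi_0,\mu}<u$ outside $B_\mu(\xi_0)$: on $\partial B_\mu(\xi_0)$ the two sides agree, and a Hopf-type computation of the normal derivative of the conformal factor gives strict inequality just outside; on each fixed annulus $\{\,\mu'\le d(\xi,\xi_0)\le R\,\}$ ($\mu'$ fixed) one has $u_{\xi_0,\mu}\le C\mu^{Q-2}\to0$ while $u$ is bounded below; and for $d(\xi,\xi_0)\ge R$ one must bound $u$ from below at infinity. Here \eqref{Th1Eq3} enters: $u\to0$ at infinity (immediate when $n\ge3$; when $n=2$ it only gives $u$ bounded, and one first replaces $u$ by its Kor\'anyi inversion, which does decay but acquires an isolated singularity to be carried along), so $u$ is a positive $\Delta_H$-superharmonic function equal to its Green potential $u=\Gamma_H*u^{(Q+2)/(Q-2)}$; restricting that integral to $B_1(0)$ yields $u(\xi)\ge c_0\,d(\xi,0)^{-(Q-2)}$ for $d(\xi,0)$ large, which dominates the tail $\mu^{Q-2}u(\xi_0)\,d(\xi,\xi_0)^{-(Q-2)}$ of $u_{\xi_0,\mu}$ once $\mu$ is small. \textbf{(ii)} $\bar\lambda(\xi_0)<\infty$. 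Otherwise $u_{\xi_0,\mu}\le u$ for all $\mu$; fixing $\xi$ far from $\xi_0$ and letting $\mu\to\infty$ in the tail asymptotics forces $u(\xi)=\infty$. \textbf{(iii) Rigidity.} In the limit $u_{\xi_0,\bar\lambda(\xi_0)}\le u$; if the inequality were strict, the Hopf boundary-point lemma for $\Delta_H$ on the exterior domain would permit increasing $\lambda$ past $\bar\lambda(\xi_0)$, contradicting maximality, so $w:=u-u_{\xi_0,\bar\lambda(\xi_0)}\ge0$ vanishes at an interior point; as $w$ is $\Delta_H$-superharmonic, $w\equiv0$ by the strong minimum principle. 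Thus $u=u_{\xi_0,\bar\lambda(\xi_0)}$ for \emph{every} $\xi_0$. \textbf{(iv) Calculus lemma.} A Li--Zhang-type rigidity lemma converts the overdetermined family $\{\,u=u_{\xi_0,\bar\lambda(\xi_0)}\,\}_{\xi_0}$ into an algebraic identity that forces $u$ into the family \eqref{eqn:JLbubble}; smoothness of $u$ on all of $\mathbb{H}^n$ is what makes the hypothesis ``fixed under an inversion about every centre'' legitimate and rules out the singular orbit. Hence $u$ is a Jerison--Lee bubble (and in the case $n=2$, the auxiliary inversion being a bubble then forces $u$ to be one).

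\medskip\noindent\textbf{Main obstacle.} The steps above rest on a body of Heisenberg-specific infrastructure that is the real content: (a) writing the maps $u\mapsto u_{\xi_0,\lambda}$ and their conformal factors explicitly in Heisenberg coordinates and verifying the exact covariance of $\Delta_H$ under them; (b) a maximum principle and --- above all --- a boundary-point (Hopf) lemma for $\Delta_H$ on unbounded exterior domains, handled with care near the characteristic set where the horizontal gradient degenerates; and (c) the calculus lemma of step (iv) in the CR setting, where the $PU(n+1,1)$-geometry must be used in full. By contrast the role of \eqref{Th1Eq3} is soft --- it only supplies the lower bound $u\gtrsim d(\cdot,0)^{-(Q-2)}$ at infinity in step (i) and disposes of $n=2$ via an inversion --- and I expect the genuinely hard problem to be removing it entirely. (A single Kor\'anyi inversion followed by an appeal to Theorem B does not obviously shortcut matters, even for $n\ge3$: the inverted solution decays like $d(\cdot,0)^{-(Q-2)}$ at infinity but carries a singularity at $0$ of order $\le d(\cdot,0)^{-(n+2)}=o(d(\cdot,0)^{-(Q-2)})$, and showing this singularity is removable --- rather than of Fowler--Delaunay type, which has exactly the same order --- is essentially the original classification problem in disguise.)
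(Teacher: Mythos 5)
Your outline follows the right general strategy (it is in fact the strategy of this paper, which proves the stronger Theorem \ref{thm:main} with no decay hypothesis at all, whereas the cited Flynn--V\'etois proof goes through a Jerison--Lee-type differential identity and integral estimates), but as written it has two genuine gaps, and they are precisely where the content lies. First, the inversion family: with the plain Kor\'anyi inversion conjugated by left translation, the Jerison--Lee bubbles are \emph{not} fixed under an inversion about every centre, so the final rigidity step cannot work for that family. One must use the generalized CR inversion $\Phi_{\xi,\lambda}^{\beta}$ of Definition \ref{def:gndcrinv}, which includes the centre- and $\beta$-dependent unitary rotation $M_{\xi,\beta}$; Proposition \ref{prop:justifydefn} is exactly the statement that with this (and only this) choice the model functions satisfy the required invariance. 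Likewise the ``Li--Zhang-type rigidity lemma'' you invoke as a black box is the pair of calculus lemmas, Theorems \ref{thm:lnzlem1} and \ref{thm:lnzlem2}, whose Heisenberg proofs (Sections \ref{sec:PfofLNZ1}--\ref{sec:PfofLNZ2}, via fixed-point arguments and first-derivative asymptotics along special curves) are not routine transcriptions of the Euclidean lemmas; leaving them unproved leaves the theorem unproved.

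Second, the maximum-principle machinery you lean on does not go through: your launching and continuation steps use a Hopf boundary-point lemma on gauge spheres and exterior domains, but the Kor\'anyi sphere has characteristic points where the horizontal gradient degenerates and no Hopf lemma for $\Delta_H$ is available --- this is exactly the obstruction that forced earlier moving-plane works in $\mathbb{H}^n$ to assume extra symmetry. The paper avoids every boundary-point argument by using the Terracini-type integral inequality \eqref{eqn:terracini} together with the Folland--Stein Sobolev inequality: both the start of the procedure and the continuation past the critical radius are reduced to making the measure of the set $A_\lambda(\xi)$ small (Section \ref{sec:PfofMainThm}), with only Bony's \emph{interior} strong maximum principle used in the equality analysis. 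Your step (ii) is also incorrect as stated: the comparison $u_{\xi_0,\mu}\le u$ holds only on $\Sigma_\mu(\xi_0)$, so for a fixed $\xi$ you cannot let $\mu\to\infty$; moreover the decay \eqref{Th1Eq3} is only of order $d^{-(n-2)}$, far weaker than the Kelvin order $d^{-(Q-2)}$, so it cannot force stopping directly. The correct route is the one in Steps 2.1--2.3: if the sphere never stops for any centre, Theorem \ref{thm:lnzlem1} forces $u$ constant, a contradiction; once it stops at one centre, the equality $u\equiv u_{\xi_0,\underline{\lambda}(\xi_0)}^{\,\beta}$ yields the exact decay $|\zeta|_{\! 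H}^{Q-2}u(\zeta)\to\underline{\lambda}(\xi_0)^{Q-2}u(\xi_0)$, which propagates finiteness to all centres.
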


The main goal of the present paper is to establish the complete classification for all dimensions $n\geq 1$, 
without any decay or integrability assumptions. More precisely, our main result is the following theorem.

\begin{thm}[Main Theorem] \label{thm:main}
For $n \geq 1$, every positive solution $u$ to \eqref{eqn:cr_yamabe} is a Jerison–Lee bubble, that is,
\begin{equation*}
u(z,t) = K\bigl| t + i|z|^2 + \mu \cdot z + \kappa \bigr|^{-\frac{Q-2}{2}},
\end{equation*}
for some $K>0$, $\mu\in\mathbb{C}^n$, $\kappa\in\mathbb{C}$ with $\operatorname{Im}\kappa > \frac {|\mu|^2}{4}$.
\end{thm}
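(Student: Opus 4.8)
The plan is to carry out the method of moving spheres for the sub-Laplacian. Working with \emph{spheres} rather than \emph{planes} is essential here: the Heisenberg group admits no reflection that is a contact transformation, whereas translations, anisotropic dilations and the CR inversion are all conformal automorphisms of \eqref{eqn:cr_yamabe}, and these are exactly what the sphere method requires. The first step is to fix the CR inversion $\iota$ on $\mathbb{H}^n$ and, for $\xi_0\in\mathbb{H}^n$ and $\lambda>0$, to define the Kelvin-type transform $u_{\xi_0,\lambda}$, obtained by conjugating $\iota\circ\delta_\lambda$ with the left translation by $\xi_0$ and inserting the conformal weight $\big(\lambda/\|\xi_0^{-1}\circ\xi\|\big)^{Q-2}$ (with $\|\cdot\|$ the Kor\'anyi gauge). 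By conformal covariance of $-\Delta_H$ one checks that $u_{\xi_0,\lambda}$ again solves \eqref{eqn:cr_yamabe} on $\mathbb{H}^n\setminus\{\xi_0\}$, that $u\mapsto u_{\xi_0,\lambda}$ is an involution agreeing with $u$ on the Kor\'anyi sphere $\partial B_\lambda(\xi_0)$, and — since $u$ is bounded near $\xi_0$ — that $\|\xi\|^{Q-2}u_{\xi_0,\lambda}(\xi)$ tends to a finite positive limit as $\xi\to\infty$; I would also record the elementary composition identities relating $u_{\xi_0,\lambda}$ for different $\lambda$, which make the monotone pushing of $\lambda$ possible.

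The analytic core is a maximum-principle package for $w_{\xi_0,\lambda}:=u-u_{\xi_0,\lambda}$ on $\Sigma_{\xi_0,\lambda}:=\mathbb{H}^n\setminus B_\lambda(\xi_0)$, where $w_{\xi_0,\lambda}$ solves a linear equation $-\Delta_H w=c_{\xi_0,\lambda}w$ with $c_{\xi_0,\lambda}$ locally bounded. Two ingredients are needed: (a) a comparison principle at infinity, for which I would combine the minimal-growth lower bound $u(\xi)\ge c(1+\|\xi\|)^{-(Q-2)}$, valid for every positive solution via the Riesz decomposition $u\ge G*u^{(Q+2)/(Q-2)}$, with the $\|\xi\|^{-(Q-2)}$ decay of $u_{\xi_0,\lambda}$; and (b) a boundary-point (Hopf) lemma on $\partial B_\lambda(\xi_0)$, routine at non-characteristic points but, at the two \emph{characteristic points} of the Kor\'anyi sphere — where it is tangent to the horizontal distribution and the classical Hopf lemma fails — requiring a dedicated argument, an adapted horizontal sub-barrier reflecting the geometry of $\partial B_\lambda$ near its poles, or a finite-order Taylor expansion using the equation for $w$. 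With this in hand, for each $\xi_0$ one shows $w_{\xi_0,\lambda}\ge0$ on $\Sigma_{\xi_0,\lambda}$ for all small $\lambda>0$ and increases $\lambda$; letting $\bar\lambda(\xi_0)\in(0,\infty]$ be the supremum of admissible $\lambda$, the strong maximum principle and (b) force $u_{\xi_0,\bar\lambda(\xi_0)}\equiv u$ whenever $\bar\lambda(\xi_0)<\infty$. Feeding one such identity back into the asymptotics (with $u$ bounded above and below near the chosen centre) yields $u(\xi)\asymp\|\xi\|^{-(Q-2)}$, whence $\bar\lambda(\xi)<\infty$ for every $\xi$; so either $\bar\lambda\equiv\infty$, which by a Heisenberg analogue of a lemma of Y.\,Y.\,Li forces $u$ constant, contradicting \eqref{eqn:cr_yamabe}, or $u_{\xi,\bar\lambda(\xi)}\equiv u$ with $\bar\lambda(\xi)<\infty$ for all $\xi$.

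The proof then concludes with a calculus lemma: if $u\colon\mathbb{H}^n\to(0,\infty)$ is continuous and for each $\xi$ there is $\lambda(\xi)>0$ with $u_{\xi,\lambda(\xi)}\equiv u$, then $u$ is a Jerison–Lee bubble. After showing $\xi\mapsto\lambda(\xi)$ is smooth, one differentiates the identity $u_{\xi,\lambda(\xi)}\equiv u$ in $\xi$ to obtain a closed second-order system for $g:=u^{-4/(Q-2)}$; the explicit forms of $\iota$ and of the Kor\'anyi gauge then force $g=\big|t+i|z|^2+\mu\cdot z+\kappa\big|^{2}$, which is precisely \eqref{eqn:JLbubble}. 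Geometrically, the hypothesis says $u^{2/n}\Theta$ is conformally CR-invariant under inversions centred at every point, hence CR-equivalent to the standard structure on $S^{2n+1}$, and pulling back under the Cayley transform gives the bubble. Everything above is uniform in $n\ge1$, so the Main Theorem follows in all dimensions at once. I expect the main obstacle to be the maximum-principle package of the preceding paragraph — obtaining a strong maximum principle and a Hopf lemma robust enough to function for the \emph{degenerate} operator $-\Delta_H$ with no a priori decay or symmetry, and in particular at the characteristic points of Kor\'anyi spheres; this is exactly the analytic hurdle that stalled earlier moving-plane approaches on $\mathbb{H}^n$, whereas the calculus lemma, though computationally heavy, is conceptually anticipated.
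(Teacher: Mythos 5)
Your overall scheme (start spheres at small radius, analyze the critical radius, then feed the resulting functional identities into Li--Zhu type calculus lemmas) is the same as the paper's, but two of your key ingredients fail as stated. First, your Kelvin-type transform is not the right object: conjugating the CR inversion $\mathscr{J}\circ\iota$ by left translations and dilations alone does \emph{not} leave the Jerison--Lee bubbles invariant. If you test a bubble $K\bigl|t+i|z|^2+\mu\cdot z+\kappa\bigr|^{-\frac{Q-2}{2}}$ against such a transform centred at a general point, matching the constant and the $w=t+i|z|^2$ terms forces $\lambda^2=|\kappa|$, and then the linear-in-$z$ terms agree only when the phases satisfy $2\arg\mu_k+\arg\kappa\equiv\pi$ for every $k$; for a generic centre this fails, so no radius makes the bubble coincide with its naive transform. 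Hence the hypothesis of your final calculus lemma ($u_{\xi,\lambda(\xi)}\equiv u$ for every $\xi$) is not even satisfied by the solutions you are trying to classify, and the moving-sphere output cannot be that identity. This is exactly why the paper introduces the \emph{generalized} CR inversion \eqref{eqn:gndcrinv}, which inserts a centre- and $\beta$-dependent unitary rotation $\rho_{M_{\xi,\beta}}$ (Definition \ref{def:gndcrinv}), with Proposition \ref{prop:justifydefn} verifying that the bubbles are genuine fixed points of this corrected family; the parameter $\beta$ is then pinned down in Theorem \ref{thm:lnzlem2} via $\beta_f=(\alpha_f)^{2/\nu}f(0)^{-2/\nu}$ after normalizing the maximum at the origin. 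Relatedly, your claim that the transform ``agrees with $u$ on the Kor\'anyi sphere $\partial B_\lambda(\xi_0)$'' is false: the CR inversion maps the Kor\'anyi sphere onto itself but not pointwise, so $u_{\xi_0,\lambda}=u\circ\Phi\neq u$ there in general; the boundary identification you would need for your comparison argument is only recovered \emph{a posteriori} at the critical radius.

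Second, the analytic core of your plan --- a comparison principle at infinity plus a Hopf boundary lemma, including ``a dedicated argument'' at the characteristic points of the Kor\'anyi sphere --- is precisely the step you leave unresolved, and it is the step that has blocked moving-plane/moving-sphere arguments on $\mathbb{H}^n$ for decades; asserting that some adapted barrier or Taylor expansion will handle the characteristic points is not a proof, and the missing boundary identity above makes even the non-characteristic Hopf step problematic. The paper sidesteps this entirely: in place of the maximum-principle package it uses the Terracini-type integral inequality of Lemma \ref{lem:terracini} (conformal invariance plus the Folland--Stein Sobolev inequality), both to start the spheres and to show the sphere can be pushed past a critical radius unless $u$ coincides with its generalized Kelvin transform; only the interior strong maximum principle of Bony is invoked, never a Hopf lemma or decay at infinity. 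Two smaller points: your case $\bar\lambda\equiv\infty$ rests on ``a Heisenberg analogue of a lemma of Y.\,Y.\,Li'' which you do not prove (the paper proves it, Theorem 1.5, via a fixed-point argument using $d_H(\Phi_{\xi,\lambda}^{\beta}(\zeta),\xi)\,d_H(\zeta,\xi)=\lambda^2$), and your final calculus lemma assumes smoothness of $\xi\mapsto\lambda(\xi)$ and differentiates, whereas the paper avoids any such regularity by working with difference quotients and the fixed-point Lemma 4.3. As it stands, then, the proposal has the right architecture but would not close: the transform must be corrected by the rotation $M_{\xi,\beta}$ (and the parameter $\beta$), and the maximum-principle package must either be established at characteristic points or replaced by an integral-inequality argument as in the paper.
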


Proofs in both \cite{CLMR23} and \cite{FV23} rely on a generalization of the Jerison-Lee's differiential identity 
combined with integral estimates. We take a completely different approach.

Before presenting our approach, it is useful to recall the Euclidean analogue of equation \eqref{eqn:cr_yamabe2}:
\begin{equation}\label{eq:euclidean_yamabe}
-\Delta u = n(n-2)u^{\frac{n+2}{n-2}}, \quad \text{in } \mathbb{R}^n \; (n \ge 3).
\end{equation}
This equation is closely related to the Yamabe problem in Riemannian geometry and to the extremals of the sharp Sobolev inequality.

Using the method of moving planes, Gidas, Ni and Nirenberg \cite{GNN81} proved that any positive \(C^2\) solution of \eqref{eq:euclidean_yamabe} satisfying
\begin{equation}\label{eq:finite_condition}
\liminf_{|x| \to \infty} \left( |x|^{n-2} u(x) \right) < \infty,
\end{equation}
must be of the form
\[
u(x) = \left( \frac{a}{1 + a^2 |x - \bar{x}|^2} \right)^{\frac{n-2}{2}},
\]
where \(a > 0\) is a constant and \(\bar{x} \in \mathbb{R}^n\).

The hypothesis \eqref{eq:finite_condition} was later removed by Caffarelli, Gidas and Spruck \cite{CGS89}, an important advance for applications. Their proof again relied on the method of moving planes. Since then, the method has become a powerful tool in the study of nonlinear elliptic equations.

Li and Zhu \cite{LZ95} provided an alternative proof of the theorem of Caffarelli, Gidas and Spruck using the \textit{method of moving spheres}---a variant of the moving planes method that incorporates conformal invariance. This approach fully exploits the conformal invariance of the problem and yields the solutions directly, without first establishing radial symmetry and then classifying radial solutions. Significant simplifications to this proof were later given by Li and Zhang \cite{LZ03}.

A cornerstone of the method of moving spheres is a pair of calculus lemmas that translate geometric comparisons into an explicit formula for the solution. In the Euclidean case, such lemmas were first formulated by Li and Zhu (Lemmas 2.2 and 2.5 in \cite{LZ95}) and later refined by Li and Nirenberg (Lemmas 5.7 and 5.8 in \cite{Li04}). In this paper, we establish their counterparts in the Heisenberg group.

The starting point is the following definition.

\begin{defn}\label{def:gndcrinv}
For $\xi=(z',t')\in \mathbb{H}^n$, $\lambda>0$ and $\beta \in \mathbb{R}$, we define
the \emph{generalized CR inversion} $\Phi_{\xi, \lambda}^{\beta}: \mathbb{H}^n\setminus \{\xi\}\to \mathbb{H}^n\setminus \{\xi\}$ by
\begin{equation}\label{eqn:gndcrinv}
\Phi_{\xi, \lambda}^{\beta} := \tau_{\xi} \circ \rho_{M_{\xi,\beta}} \circ \delta_{\lambda^2} \circ
\mathscr{J} \circ \iota \circ \tau_{\xi}^{-1}
\end{equation}
where $M_{\xi,\beta} \in \mathcal{U}(n)$ is the diagonal matrix
\begin{equation}\label{eqn:Mxibeta}
M_{\xi,\beta} = \operatorname{diag} \left( e^{i\theta_1}, \cdots, e^{i\theta_n} \right)
\end{equation}
with $\theta_k$ for $k = 1, \ldots, n$ given by
\[
\theta_k := \begin{cases}
2 \operatorname{arg} z_k' + \operatorname{arg} \left(t'+i|z'|^2
+i\beta\right),& \text{if } z_k'\neq 0\\
0, & \text{if } z_k' = 0.
\end{cases}
\]
(See Section \ref{subsec:CRmaps} for the definitions of the CR maps $\tau_{\xi}$, $\rho_{M}$, $\delta_{\lambda^2}$, $\mathscr{J}$ and $\iota$.)
\end{defn}
The generalized CR inversion $\Phi_{\xi, \lambda}^{\beta}$ can be viewed as the CR inversion in the Kor\'anyi metric 
sphere $\partial B_{\lambda}(\xi)$, playing a role analogous to reflection across the Euclidean sphere $S(a, r)$ in $\mathbb{R}^n$.

This definition draws inspiration from the pioneering attempt of Han, Wang, and Zhu \cite{HWZ17} and is justified by the proposition below.

\begin{prop}\label{prop:justifydefn}
Let $\nu, \beta \in \mathbb{R}$ and define
\[
f_{\beta}(z,t) := \bigl| t + i|z|^{2} + i\beta \bigr|^{-\nu/2}.
\]
Then for every $\xi \in \mathbb{H}^{n}$, there exists $\lambda(\xi) > 0$ such that
\begin{equation}\label{eqn:funceqn}
\left(\frac{\lambda(\xi)}{d_{H}\!\left(\xi,\zeta\right)}\right)^{\!\nu} 
\, f_{\beta}\Bigl(\Phi_{\xi,\lambda(\xi)}^{\beta}(\zeta)\Bigr) 
\;=\; f_{\beta}(\zeta), \qquad \forall\, \zeta \in \mathbb{H}^{n}\setminus\{\xi\}.
\end{equation}
\end{prop}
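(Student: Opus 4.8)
The plan is to verify the functional equation \eqref{eqn:funceqn} by direct computation, boiling it down to a single scalar identity for the complex defining function $\phi(z,t):=t+i|z|^{2}$. In terms of $\phi$ one has $f_{\beta}=|\phi+i\beta|^{-\nu/2}$, and the Kor\'anyi gauge satisfies $d_{H}(0,(z,t))^{2}=|\phi(z,t)|$. First I would substitute $\zeta=\tau_{\xi}(\eta)$, use left-invariance of $d_{H}$ to get $d_{H}(\xi,\zeta)=d_{H}(0,\eta)=|\phi(\eta)|^{1/2}$, and abbreviate $\Psi:=\rho_{M_{\xi,\beta}}\circ\delta_{\lambda^{2}}\circ\mathscr{J}\circ\iota$ and $\phi_{\beta}:=\phi+i\beta$; then, after raising \eqref{eqn:funceqn} to the power $2/\nu$, the claim is equivalent to
\[
\lambda(\xi)^{2}\,\bigl|\phi_{\beta}(\tau_{\xi}\eta)\bigr|=|\phi(\eta)|\cdot\bigl|\phi_{\beta}\bigl(\tau_{\xi}\Psi(\eta)\bigr)\bigr|\qquad\text{for all }\eta\in\mathbb{H}^{n}\setminus\{0\}.
\]

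Next I would collect the elementary transformation rules for $\phi$ and for the $\mathbb{C}^{n}$-component $p\mapsto p_{z}$ under the CR maps of Section~\ref{subsec:CRmaps}: from the group law, $\phi\bigl(\tau_{\xi}(p)\bigr)=\phi(\xi)+\phi(p)+2i\langle p_{z},z'\rangle$ for $\xi=(z',t')$, where $\langle a,b\rangle:=\sum_{j}a_{j}\overline{b_{j}}$ (the numerical constant being fixed by the group law); under $\delta_{\lambda^{2}}$ one has $\phi\mapsto\lambda^{4}\phi$, $p_{z}\mapsto\lambda^{2}p_{z}$; under $\rho_{M}$, $\phi$ is unchanged and $p_{z}\mapsto Mp_{z}$; and $\mathscr{J}\circ\iota$ sends $\phi(p)\mapsto 1/\overline{\phi(p)}$ and $p_{z}\mapsto -\overline{p_{z}}/\overline{\phi(p)}$. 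Composing these, $\Psi(\eta)$ has $\phi$-value $\lambda^{4}/\overline{\phi(\eta)}$ and $z$-component $-\lambda^{2}M_{\xi,\beta}\overline{\eta_{z}}/\overline{\phi(\eta)}$, so that
\[
\phi_{\beta}\bigl(\tau_{\xi}\Psi(\eta)\bigr)=\phi_{\beta}(\xi)+\frac{\lambda^{4}-2i\lambda^{2}\,\langle M_{\xi,\beta}\overline{\eta_{z}},z'\rangle}{\overline{\phi(\eta)}}.
\]

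The crux is the role of the rotation $M_{\xi,\beta}$. Its diagonal entries $e^{i\theta_{k}}$ with $\theta_{k}=2\arg z'_{k}+\arg\phi_{\beta}(\xi)$ are chosen precisely so that $e^{i\theta_{k}}\overline{z'_{k}}=e^{i\arg\phi_{\beta}(\xi)}\,z'_{k}$ for every $k$ (trivially so when $z'_{k}=0$), and hence
\[
\langle M_{\xi,\beta}\overline{\eta_{z}},z'\rangle=\sum_{k}e^{i\theta_{k}}\,\overline{(\eta_{z})_{k}}\,\overline{z'_{k}}=e^{i\arg\phi_{\beta}(\xi)}\,\overline{\langle\eta_{z},z'\rangle};
\]
in other words $M_{\xi,\beta}$ simultaneously undoes the complex conjugation introduced by $\iota$ and twists the cross term by the phase of $\phi_{\beta}(\xi)$. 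Feeding this back into the displayed formula and setting
\[
\lambda(\xi)^{2}:=\bigl|\phi_{\beta}(\xi)\bigr|=\bigl|\,t'+i|z'|^{2}+i\beta\,\bigr|,
\]
the scalar identity reduces to $\bigl|\phi(\eta)\,\phi_{\beta}(\tau_{\xi}\Psi\eta)\bigr|=\lambda^{2}\bigl|\phi_{\beta}(\tau_{\xi}\eta)\bigr|$; multiplying the left-hand product by the unit $e^{-i\arg\phi_{\beta}(\xi)}\,\overline{\phi(\eta)}/\phi(\eta)$ and using $e^{-i\arg\phi_{\beta}(\xi)}\phi_{\beta}(\xi)=|\phi_{\beta}(\xi)|=\lambda^{2}$, the $\phi(\eta)$-dependent terms recombine and the product collapses to $\lambda^{2}\,\overline{\phi_{\beta}(\xi)+\phi(\eta)+2i\langle\eta_{z},z'\rangle}=\lambda^{2}\,\overline{\phi_{\beta}(\tau_{\xi}\eta)}$, which has the required modulus. (The conjugation that appears is harmless because only $|\cdot|$ enters $f_{\beta}$; it encodes the fact that $\Phi_{\xi,\lambda}^{\beta}$ behaves like a reflection rather than an orientation-preserving automorphism.) This computation is the exact Heisenberg analogue of the elementary fact that $(1+|x|^{2})^{-\nu/2}$ is fixed, up to the conformal factor $(r/|x-a|)^{\nu}$, under Kelvin inversion about the Euclidean sphere $S\bigl(a,\sqrt{1+|a|^{2}}\bigr)$.

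I expect the only real obstacle to be bookkeeping — carrying the complex conjugations correctly through the composition $\mathscr{J}\circ\iota$ and lining them up with the diagonal rotation, which is exactly the mechanism Definition~\ref{def:gndcrinv} is built around. A minor point is that $\lambda(\xi)>0$ requires $\phi_{\beta}(\xi)\neq0$; this holds for all $\xi\in\mathbb{H}^{n}$ when $\beta>0$ (the range relevant to \eqref{eqn:cr_yamabe}, where $f_{\beta}$ is smooth and positive), and for $\beta\le0$ one simply records which points $\xi$ must be excluded.
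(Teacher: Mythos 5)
Your proposal is correct and is essentially the paper's own proof: both verify the functional equation by direct computation, choose $\lambda(\xi)^2=\bigl|t'+i|z'|^2+i\beta\bigr|$, and hinge on the same property of the diagonal rotation, namely $e^{i\theta_k}\overline{z_k'}=e^{i\arg(t'+i|z'|^2+i\beta)}z_k'$ (equivalently $\frac{|w'+i\beta|}{w'+i\beta}M_{\xi,\beta}\overline{z'}=z'$), which realigns the cross term produced by $\mathscr{J}\circ\iota$; your reformulation via $\phi_\beta$ and the final unimodular-factor trick is only a bookkeeping variant of the paper's chain of equalities. Your closing remark that $\lambda(\xi)>0$ requires $t'+i|z'|^2+i\beta\neq 0$ (automatic for $\beta>0$, the case used later) is a sensible observation that the paper leaves implicit.
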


See Section \ref{subsec:Pf_justified} for the proof of the proposition.

We are now ready to state our calculus lemmas of Li–Nirenberg–Zhu type in the Heisenberg setting.

\begin{thm}[Calculus Lemma I of Li-Nirenberg-Zhu type] \label{thm:lnzlem1}
Let \(n\ge 1\) and \(\nu,\beta\in\mathbb{R}\). Assume \(f:\mathbb{H}^n\to\mathbb{R}\) satisfies
\begin{equation}\label{eqn:lnzinq}
\Bigl(\frac{\lambda}{d_H(\xi,\zeta)}\Bigr)^{\nu}
\,f \bigl(\Phi_{\xi,\lambda}^{\beta}(\zeta)\bigr) \leq f(\zeta)
\qquad \forall \xi\in\mathbb{H}^n,\; \forall \zeta\in\Sigma_{\lambda}(\xi),\; \forall \lambda>0,
\end{equation}
where \(\Sigma_{\lambda}(\xi):=\mathbb{H}^n\setminus \overline{B_\lambda(\xi)}\). Then \(f\) is constant.
\end{thm}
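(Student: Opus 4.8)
The plan is to reduce the theorem to a single geometric statement about the family of generalized CR inversions and then deduce the conclusion in a few lines. Throughout, $\beta$ denotes the constant fixed in the hypothesis. The statement to be established is:

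\smallskip
\noindent\textbf{Transitivity with weight control.} \emph{For every pair of distinct points $\zeta_1,\zeta_2\in\mathbb{H}^n$ and every $\varepsilon>0$ there exist $\xi\in\mathbb{H}^n$ and $\lambda>0$ such that}
\[
d_H(\xi,\zeta_1)>\lambda,\qquad \Phi_{\xi,\lambda}^{\beta}(\zeta_1)=\zeta_2,\qquad 1-\varepsilon<\frac{d_H(\xi,\zeta_2)}{d_H(\xi,\zeta_1)}<1 .
\]
\smallskip

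\noindent The rightmost inequality is in fact automatic once $d_H(\xi,\zeta_1)>\lambda$, because every generalized CR inversion satisfies $d_H(\xi,\eta)\,d_H\!\bigl(\xi,\Phi_{\xi,\lambda}^{\beta}(\eta)\bigr)=\lambda^2$ for all $\eta\ne\xi$ (a direct consequence of Definition~\ref{def:gndcrinv}: $d_H$ is left-invariant, $\rho_{M_{\xi,\beta}}$ and $\iota$ are $d_H$-isometries, $\delta_{\lambda^2}$ scales $d_H$ by $\lambda^2$, and $\mathscr J$ inverts it). The genuine content is that the ratio $d_H(\xi,\zeta_2)/d_H(\xi,\zeta_1)$ can be pushed arbitrarily close to $1$.

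Granting this statement, the theorem is immediate. Fix $\zeta_1\ne\zeta_2$ and take $(\xi,\lambda)$ as above. From $\lambda^2=d_H(\xi,\zeta_1)\,d_H(\xi,\zeta_2)$ we get $\bigl(\lambda/d_H(\xi,\zeta_1)\bigr)^{\nu}=\bigl(d_H(\xi,\zeta_2)/d_H(\xi,\zeta_1)\bigr)^{\nu/2}$, so \eqref{eqn:lnzinq}, applied at this $\xi$, at $\zeta_1\in\Sigma_\lambda(\xi)$, and at this $\lambda$, reads
\[
\left(\frac{d_H(\xi,\zeta_2)}{d_H(\xi,\zeta_1)}\right)^{\!\nu/2} f(\zeta_2)\ \le\ f(\zeta_1).
\]
Letting $\varepsilon\downarrow 0$, the coefficient tends to $1$, and since $f(\zeta_2)$ is a fixed real number this forces $f(\zeta_2)\le f(\zeta_1)$. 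Applying the statement to the ordered pair $(\zeta_2,\zeta_1)$ yields the reverse inequality, hence $f(\zeta_1)=f(\zeta_2)$. As $\zeta_1,\zeta_2$ were arbitrary, $f$ is constant. (Note that no regularity of $f$, not even continuity, is used anywhere — which is why the hypothesis can dispense with it.)

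Thus everything reduces to Transitivity with weight control, and this is the step I expect to be the main obstacle. It is the Heisenberg analogue of the elementary Euclidean fact that two distinct points are interchanged, up to a scaling factor, by the inversion in any sphere centred on the line through them — except that here the admissible centres $\xi$ form a one-parameter curve, which one then lets run off to infinity. Conjugating $\Phi_{\xi,\lambda}^{\beta}(\zeta_1)=\zeta_2$ by $\tau_\xi$ turns it into
\[
\rho_{M_{\xi,\beta}}\!\circ\delta_{\lambda^2}\!\circ\mathscr J\!\circ\iota\bigl(\xi^{-1}\!\cdot\zeta_1\bigr)=\xi^{-1}\!\cdot\zeta_2 ,
\]
an underdetermined system of $2n+1$ scalar equations in the $2n+2$ real unknowns $(\xi,\lambda)$, which I would resolve explicitly from the coordinate expressions for $\tau_\xi,\rho_{M_{\xi,\beta}},\delta_{\lambda^2},\mathscr J,\iota$: the dilation $\lambda$ and the vertical component of $\xi$ serve to match the $t$-coordinate, while the diagonal phases encoded in $M_{\xi,\beta}$ — which are functions of $\xi$ — match the $n$ complex horizontal coordinates; the whole point of the twisted construction \eqref{eqn:gndcrinv} is precisely to supply the phase rotation needed to defeat the non-commutativity that the bare Kor\'anyi inversion cannot cope with. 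One then checks that the solution curve is unbounded in $\xi$ and that, along the branch on which $d_H(\xi,\zeta_1)>d_H(\xi,\zeta_2)$, one may let $\xi\to\infty$; by the triangle inequality $\bigl|d_H(\xi,\zeta_1)-d_H(\xi,\zeta_2)\bigr|\le d_H(\zeta_1,\zeta_2)$ together with $d_H(\xi,\zeta_i)\to\infty$, the ratio then tends to $1^{-}$, as required. Should some exceptional pair fail to be joinable by a single inversion, one routes it through an auxiliary point instead: the one-step image $\{\Phi_{\xi,\lambda}^{\beta}(\eta):\xi\in\mathbb{H}^n\setminus\{\eta\},\ \lambda>0\}$ is open for each $\eta$, so ``joinable by a finite chain of generalized CR inversions'' is an open equivalence relation on the connected manifold $\mathbb{H}^n$ and hence has a single class, while the finitely many weight factors acquired along a chain still multiply to $1$ in the limit.
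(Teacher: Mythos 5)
Your reduction is exactly the paper's: both arguments hinge on showing that for prescribed distinct points $\zeta_1,\zeta_2$ one can find a generalized CR inversion carrying one to the other, and then using the reflection identity \eqref{eqn:reflectionidentity} to see that the conformal weight tends to $1$, so that \eqref{eqn:lnzinq} yields $f(\zeta_2)\le f(\zeta_1)$ and, by exchanging the roles of the two points, equality. That final deduction in your proposal is correct. The trouble is that the only substantive step --- your ``Transitivity with weight control,'' i.e.\ the existence, for given $\zeta_1\ne\zeta_2$ and suitable (arbitrarily large) $\lambda$, of a centre $\xi$ with $\Phi_{\xi,\lambda}^{\beta}(\zeta_1)=\zeta_2$ and $d_H(\xi,\zeta_1)>\lambda$ --- is not proved: you offer a dimension count ($2n+1$ equations in $2n+2$ unknowns), a promise to ``resolve explicitly'' the conjugated coordinate equations, an unverified claim that the solution set is an unbounded curve along a branch of which $d_H(\xi,\zeta_1)>\lambda$, and a fallback through chains of inversions resting on an unproved openness assertion. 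None of this is routine here, because the unitary factor $M_{\xi,\beta}$ in Definition~\ref{def:gndcrinv} depends on $\xi$ nonlinearly and even discontinuously (the phases $\theta_k$ jump where $z_k'=0$), so explicit solvability and the ``solution curve'' picture are unsupported, the openness of the one-step image does not come for free, and the chain fallback would in any case require the same per-link weight control that is the very statement at issue.

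The paper fills precisely this gap with a topological fixed-point device: for fixed $\lambda$ it considers $T(\xi):=\eta\cdot\bigl(\Phi_{\xi,\lambda}^{\beta}(\zeta)\bigr)^{-1}\cdot\xi$, notes via \eqref{eqn:reflectionidentity} that $|T(\xi)|_H\le|\eta|_H+\lambda^{2}/d_H(\xi,\zeta)$, so that $T$ maps a large Kor\'anyi sphere into the corresponding closed ball, and then invokes a fixed-point theorem to produce $\xi^{*}=\xi^{*}(\lambda)$ with $\Phi_{\xi^{*},\lambda}^{\beta}(\zeta)=\eta$; the weight control follows as you describe, since the product of the two distances equals $\lambda^{2}$ while their difference stays bounded by $d_H(\zeta,\eta)$, forcing the ratio to $1$ as $\lambda\to\infty$. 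Unless you carry out such an existence argument (fixed point, or a genuine resolution of the system, phases included), your proposal establishes only the easy implication and leaves the heart of Theorem~\ref{thm:lnzlem1} open.
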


\begin{thm}[Calculus Lemma II of Li-Nirenberg-Zhu type] \label{thm:lnzlem2}
Let \(n\ge 1\) and \(\nu>0\). Suppose a nonnegative function \(f\in C^0(\mathbb{H}^n)\) attains its maximum at the origin, and for every \(\xi\in\mathbb{H}^n\) there exists \(\lambda(\xi)>0\) such that
\[
\Bigl(\frac{\lambda(\xi)}{d_H(\xi,\zeta)}\Bigr)^{\nu}
f\!\bigl(\Phi_{\xi,\lambda(\xi)}^{\beta_f}(\zeta)\bigr)=f(\zeta)
\qquad \forall \zeta\in\mathbb{H}^n\setminus\{\xi\},
\]
where 
\begin{equation}\label{eqn:alphabeta}
\beta_f:=(\alpha_f)^{2/\nu}f(0)^{-2/\nu} \quad \text{and} \quad \alpha_f:=\lim_{|\zeta|_H\to\infty}|\zeta|_H^{\nu}f(\zeta).
\end{equation}
Then
\[
f(z,t) = \alpha_f\,\bigl| t + i|z|^2 + i\beta_f \bigr|^{-\frac{\nu}{2}}.
\]
\end{thm}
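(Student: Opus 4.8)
The plan is to adapt the Euclidean scheme of Li--Zhu \cite{LZ95} and Li--Nirenberg \cite{Li04}: first read off the exact value of $\lambda(\xi)$ from the functional equation, then identify the correct model and reduce matters to a rigidity statement, and finally prove the rigidity. \emph{Step 1 (pinning down $\lambda(\xi)$).} I would let $\zeta\to\infty$ in the functional equation. The generalized CR inversion interchanges $\xi$ with the point at infinity, so $\Phi_{\xi,\lambda(\xi)}^{\beta_f}(\zeta)\to\xi$; moreover $d_H(\xi,\zeta)/|\zeta|_H\to1$ as $\zeta\to\infty$, while $|\zeta|_H^{\nu}f(\zeta)\to\alpha_f$ by definition. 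Multiplying the functional equation by $|\zeta|_H^{\nu}$ and using the continuity of $f$ yields
\[
\lambda(\xi)^{\nu}\,f(\xi)=\alpha_f\qquad\text{for every }\xi\in\mathbb{H}^n .
\]
Assuming $f\not\equiv0$ (otherwise $\alpha_f,\beta_f$ are undefined), this forces $\alpha_f>0$; hence $f>0$ everywhere, $\lambda(\cdot)$ is continuous, and $\lambda(0)^{\nu}=\alpha_f/f(0)$.

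\emph{Step 2 (the model and the reduction).} Put $g(z,t):=\alpha_f\,\bigl|t+i|z|^2+i\beta_f\bigr|^{-\nu/2}$. By Proposition \ref{prop:justifydefn}, applied with the same $\nu$ and with $\beta=\beta_f$ and then rescaled by $\alpha_f$, the function $g$ satisfies the same functional equation with some $\lambda_g(\xi)>0$; repeating Step 1 for $g$ --- one checks $|\zeta|_H^{\nu}g(\zeta)\to\alpha_f$, so $\alpha_g=\alpha_f$ --- gives $\lambda_g(\xi)^{\nu}g(\xi)=\alpha_f$. The choice $\beta_f=\alpha_f^{2/\nu}f(0)^{-2/\nu}$ is made precisely so that $g(0)=\alpha_f\beta_f^{-\nu/2}=f(0)$, and one notes that $g$, like $f$, attains its maximum at the origin. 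Consequently, on the closed set $E:=\{\xi:f(\xi)=g(\xi)\}\ni0$ one has $\lambda_g(\xi)=\lambda(\xi)$, and since $M_{\xi,\beta_f}$ depends only on $(\xi,\beta_f)$ the two maps $\Phi_{\xi,\lambda(\xi)}^{\beta_f}$ and $\Phi_{\xi,\lambda_g(\xi)}^{\beta_f}$ agree on $E$. Dividing the two functional equations shows that $h:=f/g$ --- continuous, positive, with $h(0)=1$ and $h(\zeta)\to1$ as $\zeta\to\infty$ --- satisfies
\[
h\bigl(\Phi_{\xi,\lambda(\xi)}^{\beta_f}(\zeta)\bigr)=h(\zeta)\qquad\text{for all }\zeta\in\mathbb{H}^n\setminus\{\xi\}\ \text{and all }\xi\in E ,
\]
so that each such inversion preserves $E$. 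The theorem reduces to showing $E=\mathbb{H}^n$, i.e. $h\equiv1$: then $f=g$, which is the asserted formula (and the Jerison--Lee constraint becomes $\beta_f>0$, automatic since $\alpha_f,f(0)>0$).

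\emph{Step 3 (the rigidity --- the hard part).} To prove $h\equiv1$ I would argue by contradiction, in the spirit of the endgame of \cite{LZ95, Li04}. Since $1/h$ enjoys the same properties, assume $M:=\sup_{\mathbb{H}^n}h>1$; because $h\to1$ at infinity, $M$ is attained at a finite point $\zeta_\ast\notin E$. One then transports $\zeta_\ast$ by the inversions above and exploits that they preserve $E$, together with the functional equations for $f$ and $g$ centered at $\zeta_\ast$ and the relations $\lambda(\zeta_\ast)^{\nu}=\alpha_f/f(\zeta_\ast)$, $\lambda_g(\zeta_\ast)^{\nu}=\alpha_f/g(\zeta_\ast)$, in order to upgrade the single identity valid at $\lambda=\lambda(\zeta_\ast)$ to one-sided inequalities for neighbouring $\lambda$ --- contradicting the maximality of $M$, or equivalently reducing matters to an application of Calculus Lemma I (Theorem \ref{thm:lnzlem1}) to a weight-zero auxiliary function built from $h$, which forces it constant, hence $\equiv h(0)=1$. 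I expect the genuine difficulty to live here, and to be two-fold: (i) computing the composition $\Phi_{\xi,\lambda_1}^{\beta}\circ\Phi_{\xi,\lambda_2}^{\beta}$ of the generalized CR inversions explicitly --- it ought to be a dilation centered at $\xi$ composed with a unitary rotation, but both the noncommutativity of the Heisenberg group law and the extra factor $M_{\xi,\beta}$ enter --- and (ii) checking that the inequalities this produces are exactly of the form demanded by Theorem \ref{thm:lnzlem1}. The maximum-at-the-origin hypothesis is used to force $g$, rather than a translated or rotated Jerison--Lee bubble, to be the correct model; this is ultimately what makes the rigidity argument close.
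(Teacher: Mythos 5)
Your Step 1 is correct and coincides with the paper's Proposition \ref{pro:lnzlem2}: sending $\zeta\to\infty$ in the functional equation gives $\lambda(\xi)^{\nu}f(\xi)=\alpha_f$ for every $\xi$. The problem is Step 3, which is where you yourself locate the difficulty and which is not an argument but a hope. A priori the set $E=\{f=g\}$ is just $\{0\}$, and the invariance $h\circ\Phi_{\xi,\lambda(\xi)}^{\beta_f}=h$ of the quotient $h=f/g$ is available only for centers $\xi\in E$ and only at the \emph{single} radius $\lambda(\xi)$; for any other radius, and for centers outside $E$, nothing is known about $h$. Theorem \ref{thm:lnzlem1}, however, needs the comparison inequality for \emph{all} centers and \emph{all} radii, so "reducing matters to an application of Calculus Lemma I to a weight-zero auxiliary function built from $h$" has no hypotheses to feed it. Likewise, "upgrading the single identity at $\lambda=\lambda(\zeta_\ast)$ to one-sided inequalities for neighbouring $\lambda$" is exactly the kind of step that in a moving-spheres argument is powered by a PDE and a maximum principle; here $f$ is merely continuous (this is a calculus lemma, no equation is assumed), so there is no monotonicity-in-$\lambda$ mechanism, and transporting the maximum point of $h$ by the one inversion centered at the origin cannot generate a contradiction by itself. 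As written, the contradiction scheme has no engine, so the proof does not close.

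The paper's route is different and is the content you are missing: it extracts \emph{derivatives} of $f$ from the functional identity rather than comparing $f$ with a model. Concretely, one first proves a fixed-point lemma (Lemma \ref{lem:fixedpoint1}: for $\zeta$ far away there is a center $\xi^{\ast}$ arbitrarily close to the origin with $\Phi_{\xi^{\ast},\lambda(\xi^{\ast})}^{\beta_f}(\zeta)=0$) and then an asymptotic lemma (Lemma \ref{lem:asymp}) stating that along curves $(z(h),t(h))$ escaping to infinity like $|h|^{-1}$ one has $\frac{1}{h}\{|(z(h),t(h))|_H^{\nu}f(z(h),t(h))-\alpha_f\}\to 0$; the maximum-at-the-origin hypothesis enters precisely here, once with the center $(0,0)$ to get the upper bound and once with the center $\xi^{\ast}$ from the fixed-point lemma to get the lower bound (not, as you suggest, to "force $g$ to be the correct model"). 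Feeding into this the explicit curves $\Phi_{\xi,\lambda(\xi)}^{\beta_f}(z',t'+h)$, $\Phi_{(z',0),\lambda(z',0)}^{\beta_f}(z'+he_k,0)$ and $\Phi_{(z',0),\lambda(z',0)}^{\beta_f}(z'+ihe_k,0)$ yields the first-order equations $\partial_t f=-\tfrac{\nu}{2}(\alpha_f)^{-4/\nu}t\,f^{1+4/\nu}$ and $\partial_{x_k}f(z,0)=-\nu(\alpha_f)^{-2/\nu}x_k f(z,0)^{1+2/\nu}$, $\partial_{y_k}f(z,0)=-\nu(\alpha_f)^{-2/\nu}y_k f(z,0)^{1+2/\nu}$, which integrate directly to $f(z,t)=\alpha_f\bigl|t+i|z|^2+i\beta_f\bigr|^{-\nu/2}$. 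If you want to salvage your outline, you would have to supply a rigidity mechanism of comparable strength; as it stands, Step 3 is a genuine gap.
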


With these two calculus Lemmas of Li-Nirenberg-Zhu type in hand, combined with a Terracini-type integral 
inequality (see Section \ref{sec:Terracini}), the proof of Theorem \ref{thm:main} follows the same scheme as 
was used by Li and Zhang in Section 2 of \cite{LZ03} (also in \cite{LZ95}). There are, broadly speaking, 
three main steps in this proof: first, to initiate the moving spheres procedure starting from small radii; 
second, to show that if the procedure stops, the solution must coincide with its generalized Kelvin 
transform; and third, to apply our calculus lemmas to demonstrate that the solution necessarily takes 
the Jerison–Lee profile. The complete argument is presented in Section \ref{sec:PfofMainThm}.

Our method is equally applicable to the subcritical case.
Consider the following subcritical equation
\begin{equation}\label{eqn:subcritical}
-\Delta_{\! H} u = u^{p} \qquad \text{in } \mathbb{H}^n
\end{equation}
where $1<p < \frac {Q+2}{Q-2}$.
  
Birindelli et al. \cite{BCC97} investigated the case $1<p<\frac {Q}{Q-2}$ and proved that the unique 
nonnegative solution of \eqref{eqn:subcritical} is $u=0$. Later, Xu \cite{Xu09} improved this result 
to the case $1<p < \frac {Q(Q+2)}{(Q-1)^2}$ ($n>1$). Recently, Ma and Ou \cite{MO23} extended the 
Liouville result to the whole interval of subcritical values of $p$.

\begin{thmd}[Ma, Ou] \label{thm:subcritical}
Let \(1<p<\frac{Q+2}{Q-2}\). Then the equation \eqref{eqn:subcritical}
admits no positive solution, namely, any nonnegative solution of \eqref{eqn:subcritical} must be identically zero.
\end{thmd}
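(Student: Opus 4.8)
The plan is to adapt the moving-spheres machinery developed in this paper to the subcritical equation \eqref{eqn:subcritical}, exploiting the fact that the strict inequality $p<\frac{Q+2}{Q-2}$ produces a favorable weight which prevents the moving-sphere procedure from ever stopping. We may assume $u>0$ in $\mathbb{H}^n$: a nonnegative solution is smooth by the hypoellipticity of $\Delta_{\! H}$, and, unless it vanishes identically, it is strictly positive everywhere by the strong maximum principle for the sub-Laplacian. For $\xi\in\mathbb{H}^n$ and $\lambda>0$ introduce the generalized Kelvin transform
\[
u_{\xi,\lambda}(\zeta):=\Bigl(\frac{\lambda}{d_H(\xi,\zeta)}\Bigr)^{Q-2}u\bigl(\Phi_{\xi,\lambda}^{0}(\zeta)\bigr),\qquad \zeta\in\mathbb{H}^n\setminus\{\xi\},
\]
where $\Phi_{\xi,\lambda}^{0}$ is the generalized CR inversion of Definition \ref{def:gndcrinv} with $\beta=0$, i.e.\ the CR inversion in the Kor\'anyi sphere $\partial B_\lambda(\xi)$. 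Because $\Phi_{\xi,\lambda}^{0}$ is a conformal CR automorphism of $\mathbb{H}^n\setminus\{\xi\}$, the Heisenberg Kelvin identity for $\Delta_{\! H}$ gives
\[
-\Delta_{\! H}\,u_{\xi,\lambda}=\Bigl(\frac{\lambda}{d_H(\xi,\cdot)}\Bigr)^{\tau}u_{\xi,\lambda}^{\,p}\quad\text{on }\mathbb{H}^n\setminus\{\xi\},\qquad \tau:=(Q+2)-p(Q-2),
\]
and the key point is that $\tau>0$ \emph{precisely} because $p$ is subcritical; hence the weight $\bigl(\lambda/d_H(\xi,\cdot)\bigr)^{\tau}$ is strictly smaller than $1$ on the exterior region $\Sigma_\lambda(\xi)=\mathbb{H}^n\setminus\overline{B_\lambda(\xi)}$.

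First I would start the moving-sphere procedure: exactly as in the first step of the proof of Theorem \ref{thm:main} (following Li--Zhang \cite{LZ03}), the continuity of $u$ near $\xi$, the decay $u_{\xi,\lambda}(\zeta)\to0$ as $d_H(\xi,\zeta)\to\infty$, and the maximum principle give $u_{\xi,\lambda}\le u$ on $\Sigma_\lambda(\xi)$ for all small $\lambda$, so that $\bar\lambda(\xi):=\sup\{\mu>0:u_{\xi,\lambda}\le u\text{ on }\Sigma_\lambda(\xi)\ \forall\,\lambda\in(0,\mu)\}>0$. The heart of the argument is to show $\bar\lambda(\xi)=\infty$ for every $\xi$. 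If $\bar\lambda:=\bar\lambda(\xi)<\infty$, set $w:=u-u_{\xi,\bar\lambda}$; by continuity $w\ge0$ on $\Sigma_{\bar\lambda}(\xi)$ with $w\equiv0$ on $\partial B_{\bar\lambda}(\xi)$, and on the open set $\Sigma_{\bar\lambda}(\xi)$, where $\bigl(\bar\lambda/d_H(\xi,\cdot)\bigr)^{\tau}<1$ and $u_{\xi,\bar\lambda}>0$,
\[
-\Delta_{\! H}\,w=u^{p}-\Bigl(\tfrac{\bar\lambda}{d_H(\xi,\cdot)}\Bigr)^{\tau}u_{\xi,\bar\lambda}^{\,p}>u^{p}-u_{\xi,\bar\lambda}^{\,p}=c(\zeta)\,w\ge0,\qquad c\ge0 .
\]
Thus $w$ is a nonnegative strict supersolution of a linear sub-elliptic operator; the strong maximum principle gives $w>0$ in $\Sigma_{\bar\lambda}(\xi)$, the Hopf lemma gives a strict sign for the horizontal normal derivative of $w$ at the noncharacteristic points of $\partial B_{\bar\lambda}(\xi)$, and, together with the uniform decay of $u_{\xi,\lambda}$ at infinity, a routine continuity argument produces $\varepsilon>0$ with $u_{\xi,\bar\lambda+\varepsilon}\le u$ on $\Sigma_{\bar\lambda+\varepsilon}(\xi)$, contradicting the definition of $\bar\lambda$. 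Hence $\bar\lambda(\xi)=\infty$ for all $\xi\in\mathbb{H}^n$, i.e.
\[
\Bigl(\frac{\lambda}{d_H(\xi,\zeta)}\Bigr)^{Q-2}u\bigl(\Phi_{\xi,\lambda}^{0}(\zeta)\bigr)\le u(\zeta)\qquad\forall\,\xi\in\mathbb{H}^n,\ \forall\,\lambda>0,\ \forall\,\zeta\in\Sigma_\lambda(\xi),
\]
which is exactly hypothesis \eqref{eqn:lnzinq} of Calculus Lemma I (Theorem \ref{thm:lnzlem1}) with $\nu=Q-2$ and $\beta=0$. Therefore $u$ is a constant; but a positive constant $c$ satisfies $0=-\Delta_{\! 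H}c=c^{p}>0$, which is absurd. Consequently \eqref{eqn:subcritical} admits no positive solution, and every nonnegative solution vanishes identically.

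The main obstacle is the step $\bar\lambda(\xi)=\infty$, where two points require care. First, the Kor\'anyi spheres $\partial B_\lambda(\xi)$ possess characteristic points, at which the classical Hopf boundary-point lemma for $\Delta_{\! H}$ is not available in its usual form; one must localize the touching analysis away from these points or invoke a sub-Riemannian Hopf-type estimate, exactly as must be done in the proof of Theorem \ref{thm:main}. Second, one must rule out ``touching at infinity'': here subcriticality is again decisive, since $\tau>0$ makes $u_{\xi,\lambda}$ a strict subsolution on $\Sigma_\lambda(\xi)$, and a touching at infinity at $\lambda=\bar\lambda$ would force $\liminf_{|\zeta|_H\to\infty}|\zeta|_H^{Q-2}u(\zeta)<\infty$, after which a Kelvin-transform and removable-singularity argument in the spirit of Caffarelli--Gidas--Spruck \cite{CGS89} excludes a solution with this decay in the subcritical range. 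By contrast, when $\tau=0$ (the critical case) the procedure genuinely may stop, which is why Theorem \ref{thm:main} additionally requires Calculus Lemma II (Theorem \ref{thm:lnzlem2}) and the Terracini-type inequality, whereas Theorem \ref{thm:subcritical} does not.
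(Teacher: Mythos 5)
Your overall strategy (start the spheres, show they never stop, invoke Calculus Lemma I) is reasonable, but the crucial continuation step is where your argument has a genuine gap, and it is precisely the step the paper is engineered to avoid. To push past a finite $\bar\lambda(\xi)$ you rely on the strong maximum principle \emph{plus a Hopf boundary lemma on the Kor\'anyi sphere} $\partial B_{\bar\lambda}(\xi)$, together with a ``routine continuity argument.'' The Kor\'anyi sphere has characteristic points, where the horizontal gradient of the defining function vanishes and no Hopf-type estimate for $\Delta_{\! H}$ is available in the form you need; your remark that this is handled ``exactly as must be done in the proof of Theorem \ref{thm:main}'' is not accurate, because the paper's proof of Theorem \ref{thm:main} never uses a Hopf lemma at all --- it replaces the pointwise boundary analysis by the Terracini-type integral inequality (Lemma \ref{lem:terracini}) and a measure-theoretic argument (Step 2.2) to move the sphere further. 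In addition, your exclusion of ``touching at infinity'' appeals to an unproved removable-singularity/decay Liouville statement in the subcritical range ``in the spirit of Caffarelli--Gidas--Spruck''; that is itself a substantive result, not a cited lemma, and as stated it is close to assuming what you are trying to prove.

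The paper's route is both softer and avoids your obstacle entirely: it does \emph{not} show that the spheres never stop. It treats the stopping case head on --- if $\underline{\lambda}(\xi_0)<\infty$ for some $\xi_0$, the Step 2.2 argument (Terracini inequality, no Hopf lemma) yields $u\equiv u_{\xi_0,\underline{\lambda}(\xi_0)}^{\,\beta}$ on $\mathbb{H}^n\setminus\{\xi_0\}$, and then Corollary \ref{cor:conformal_invariance} gives
\[
0=-\Delta_{\! H}\bigl(u-u_{\xi_0,\underline{\lambda}(\xi_0)}^{\,\beta}\bigr)
=\Bigl\{1-\Bigl(\tfrac{\underline{\lambda}(\xi_0)}{d_{\! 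H}(\xi_0,\cdot)}\Bigr)^{(Q+2)-(Q-2)p}\Bigr\}u^{p},
\]
so the strict subcritical weight forces $u\equiv 0$ directly; in the remaining case ($\underline{\lambda}\equiv\infty$) Calculus Lemma I gives a constant, hence $0$, just as in your proposal. If you want to keep your framework, the fix is to replace your Hopf-lemma continuation and the ``touching at infinity'' discussion by the integral-inequality continuation of Step 2.2, or simply by the dichotomy above; as written, the continuation step does not go through in the Heisenberg setting.
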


We will recover this result using the method of moving spheres in Section \ref{sec:PfofMaOu}.

\section{Preliminaries}
\label{sec:preli}

\subsection{The Heisenberg Group}
\label{subsec:Heisenberg}

The Heisenberg group $\mathbb{H}^{n}$ is the Lie group whose underlying manifold is
\[
\mathbb{C}^{n} \times \mathbb{R}=\left\{(z, t): z=\left(z_{1}, \cdots, z_{n}\right) \in \mathbb{C}^{n}, t \in \mathbb{R}\right\},
\]
endowed with the group law: given $\xi=(z,t)$ and $\zeta=\left(z', t'\right)$,
\begin{equation}\label{eqn:grouplaw}
(z, t) \cdot \left(z', t'\right):=\left(z+z', t+t'+2 \operatorname{Im} \langle z, z'\rangle \right),
\end{equation}
where $\langle z, z'\rangle :=\sum_{j=1}^{n} z_{j} \overline{z'_j}$ is the hermitian inner product on $\mathbb{C}^n$. Haar measure on $\mathbb{H}^n$ is the usual Lebesgue measure $d\xi=dz\,dt$. (To be more precise, $dz=dx\,dy$ if $z=x+iy$ with $x,y\in\mathbb{R}^n$.) 

We define, for $\xi=(z,t)\in\mathbb{H}^n$, the Kor\'anyi norm
\[
|\xi|_{\! H} :=\left( \vert z\vert^4+ t^2\right)^{\frac{1}{4}},
\]
with the associated distance function
$$
d_{\! H}(\xi,\zeta) := \left\vert \zeta^{-1}\cdot \xi \right\vert_{\! H} \quad \text{for } \xi, \zeta\in\mathbb{H}^n\, ,
$$
where $\zeta^{-1}$ denotes the inverse of $\zeta$ with respect to the group law \eqref{eqn:grouplaw}, 
i.e.  $\zeta^{-1}=-\zeta$.

We use the notation $B_{\lambda}(\xi)$ for the metric ball centred at $\xi\in\mathbb{H}^n$ with radius $\lambda>0$, i.e.
\[
B_{\lambda}(\xi) :=\lbrace \zeta\in\mathbb{H}^n \, : \, d_{\! H}(\xi, \zeta)< \lambda\rbrace\,.
\]
Also, we write $\Sigma_{\lambda}(\xi):=\mathbb{H}^n\setminus \overline{B_{\lambda}(\xi)}$.

If we set \(z_j = x_j + iy_j\), $j=1,\ldots,n$, then \((x_1, \cdots, x_n, y_1, \cdots, y_n, t)\) form a real coordinate system for \(\mathbb{H}_n\). In this coordinate system we define the following vector fields:
\[
X_j := \frac{\partial}{\partial x_j} + 2y_j \frac{\partial}{\partial t}, \quad 
Y_j := \frac{\partial}{\partial y_j} - 2x_j \frac{\partial}{\partial t}, \quad 
T := \frac{\partial}{\partial t}, \quad j=1,\ldots,n.
\]
They form a basis for the left-invariant vector fields on \(\mathbb{H}_n\). 
The sub‑Laplacian (or Heisenberg Laplacian) on $\mathbb{H}^n$ is then defined by
\[
\Delta_{\! H}:=\sum_{j=1}^{n}\big(X_j^2+Y_j^2\big),
\]
and the horizontal gradient of a regular function $u:\mathbb{H}^n\to \mathbb{R}$ is defined by
\[
\nabla_{\! H} {u}:=(X_1u,\dots,X_nu,Y_1u,\dots,Y_nu).
\]

\subsection{CR maps and the generalized Kelvin transform on $\mathbb{H}^n$}
\label{subsec:CRmaps}

For any fixed $\xi=(z',t')\in\mathbb{H}^n$ we will denote by
$\tau_{\xi}:\mathbb{H}^n\rightarrow\mathbb{H}^n$ the \emph{left translation}
on $\mathbb{H}^n$ by $\xi$, defined by
\begin{equation}\label{trasl}
\tau_{\xi}(\zeta)=\xi\cdot\zeta, \quad \zeta\in \mathbb{H}^n,
\end{equation}
where $\cdot$ denotes the group law defined in \eqref{eqn:grouplaw},
while for any $\lambda>0$ we will denote by
$\delta_\lambda:\mathbb{H}^n\rightarrow\mathbb{H}^n$ the \emph{dilation} defined
by
\begin{equation}\label{dil}
\delta_\lambda(z,t):=(\lambda z,\, \lambda^2 t), \qquad (z,t)\in\mathbb{H}^n,
\end{equation}
which satisfies
$$\delta_\lambda(\xi\cdot\zeta)=\delta_\lambda(\xi)\cdot\delta_\lambda(\zeta)$$
for every $\xi$, $\zeta\in\mathbb{H}^n$ and every $\lambda>0$.

Notice that the Kor\'anyi norm is
homogeneous of degree $1$ with respect to the dilations
$\delta_\lambda$, i.e.
\begin{equation}\label{eqn:dilationnorm}
|\delta_\lambda(\zeta)|_{\! H}=\lambda|\zeta|_{\! H} \qquad\forall\,\zeta\in\mathbb{H}^n,\; \forall\lambda>0.
\end{equation}
For any unitary matrix $M\in \mathcal{U}(n)$, we will denote by
$\rho_{\! M}:\mathbb{H}^n\rightarrow\mathbb{H}^n$ the \emph{rotation} defined
by
\begin{equation}\label{rot}
\rho_{\! M}(z,t):=(Mz,t), \qquad (z,t)\in\mathbb{H}^n.
\end{equation}

We finally introduce the \emph{inversion} map
$\iota:\mathbb{H}^n\rightarrow\mathbb{H}^n$ defined by
\begin{equation}\label{invers}
\iota(z,t):=(\overline{z},-t), \qquad (z,t)\in\mathbb{H}^n
\end{equation}
and the map
$\mathscr{J}:\mathbb{H}^n\rightarrow\mathbb{H}^n$ defined by Jerison and Lee in
\cite{JL88} which we shall refer to as the \emph{CR inversion}
and which is given by
\begin{equation}\label{CRinvers}
\mathscr{J}(z,t):=\left(\frac {z}{w},-\frac {t}{|w|^2}\right), \qquad (z,t)\in\mathbb{H}^n\setminus \{(0,0)\},
\end{equation}
where $w:=t+i|z|^2$. We remark that
\begin{equation}\label{eqn:inv_norm}
\left|\mathscr{J}(\xi)\right|_{\! H}=|\xi|_{\! H}^{-1}. 
\end{equation}
The CR inversion in $\mathbb{H}^n$ plays the role of the usual inversion with respect to the
unitary sphere in $\mathbb{R}^n$.

A \emph{CR maps} on $\mathbb{H}^n$ is a finite composition of  
the left translations \eqref{trasl}, the dilations \eqref{dil},
the rotations \eqref{rot}, the inversion map \eqref{invers}
and the CR inversion \eqref{CRinvers}.


Following \cite{LM12}, for $u\in C^2(\mathbb{H}^n)$, we define the transformed function $u_\psi$ of $u$ through 
the CR map $\psi:\mathbb{H}^n\to \mathbb{H}^n$ by 
\begin{equation}
u_\psi(\xi):=|J_\psi(\xi)|^\frac{Q-2}{2Q}u\big(\psi(\xi)\big),\qquad\xi\in\mathbb{H}^n,
\end{equation}
where $J_\psi(\xi)$ denotes the Jacobian matrix of $\psi$ evaluated at
$\xi$ and $|J_\psi(\xi)|$ denotes its determinant.
In particular, the transformed function of $u$ under 
the generalized CR inversions $\Phi_{\xi, \lambda}^{\, \beta}$
(as we introduced in Definition \ref{def:gndcrinv})
is called the \emph{generalized Kelvin Transform} of $u$.
More precisely, we introduce the following definition.

\begin{defn}[Generalized Kelvin Transform]
Let $\xi\in\mathbb{H}^n$, $\lambda>0$ and $\beta\in\mathbb{R}$. For a function $u:\mathbb{H}^n\to\mathbb{R}$ 
we define its generalized Kelvin transform $u_{\xi,\lambda}^{\beta}$ by
\[
u_{\xi,\lambda}^{\beta}(\eta):=\Bigl(\frac{\lambda}{d_H(\eta,\xi)}\Bigr)^{Q-2}
u \bigl(\Phi_{\xi,\lambda}^{\beta}(\eta)\bigr),\qquad \eta\in\mathbb{H}^n\setminus\{\xi\}.
\]
\end{defn}

The following observation by Li and Monticelli \cite[Remark 2.9]{LM12}, is fundamental:

\begin{lem}\label{lem:Li-Monticelli}
For every $u\in C^2(\mathbb{H}^n)$ and every CR map $\psi:\mathbb{H}^n\rightarrow\mathbb{H}^n$,
one has
\begin{equation}
u_\psi^{-\frac{Q+2}{Q-2}}\Delta_{\! H}u_\psi=\big(u^{-\frac{Q+2}{Q-2}}\Delta_{\! H}u\big)\circ\psi.
\end{equation}
\end{lem}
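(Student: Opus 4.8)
The plan is to reduce the statement to the five elementary families of CR maps of Section~\ref{subsec:CRmaps} --- left translations, dilations, rotations, the inversion $\iota$ and the CR inversion $\mathscr{J}$ --- and to verify it on each of them, exploiting that the quantity in question is multiplicative under composition. Write $N[u]:=u^{-\frac{Q+2}{Q-2}}\Delta_{\! H}u$. Two elementary observations drive the reduction. First, the chain rule gives $|J_{\psi\circ\phi}(\xi)|=|J_\psi(\phi(\xi))|\,|J_\phi(\xi)|$, from which one reads off the cocycle relation $(u_\psi)_\phi=u_{\psi\circ\phi}$ for the transform $u\mapsto u_\psi$. Second, if $N[u_\psi]=N[u]\circ\psi$ and $N[u_\phi]=N[u]\circ\phi$ hold for all $u\in C^2(\mathbb{H}^n)$, then, $u_\phi$ being again $C^2$,
\[
N[u_{\psi\circ\phi}]=N[(u_\phi)_\psi]=N[u_\phi]\circ\psi=(N[u]\circ\phi)\circ\psi=N[u]\circ(\psi\circ\phi).
\]
Hence it suffices to establish the identity when $\psi$ is one of the five generating maps.

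For $\psi\in\{\tau_{\xi},\rho_{\! M},\iota\}$ the real Jacobian determinant has absolute value $1$, so $u_\psi=u\circ\psi$, and moreover $\Delta_{\! H}(u\circ\psi)=(\Delta_{\! H}u)\circ\psi$: by left-invariance of the frame $\{X_j,Y_j\}$ when $\psi=\tau_{\xi}$; because $\rho_{\! M}$ is a group automorphism under which $\{X_j,Y_j\}$ is carried to another orthonormal frame (by the orthogonal matrix underlying the unitary $M$), leaving $\sum_j(X_j^2+Y_j^2)$ unchanged; and by the one-line identities $X_j(u\circ\iota)=(X_ju)\circ\iota$, $Y_j(u\circ\iota)=-(Y_ju)\circ\iota$ when $\psi=\iota$. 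In all three cases $N[u_\psi]=(u\circ\psi)^{-\frac{Q+2}{Q-2}}(\Delta_{\! H}u)\circ\psi=N[u]\circ\psi$. For $\psi=\delta_\lambda$ we have $|J_{\delta_\lambda}|\equiv\lambda^{Q}$, hence $u_{\delta_\lambda}=\lambda^{\frac{Q-2}{2}}\,u\circ\delta_\lambda$; combining this with the scaling relation $\Delta_{\! H}(u\circ\delta_\lambda)=\lambda^{2}(\Delta_{\! H}u)\circ\delta_\lambda$, the powers of $\lambda$ cancel in $N[u_{\delta_\lambda}]$ and $N[u_{\delta_\lambda}]=N[u]\circ\delta_\lambda$.

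The remaining --- and genuinely substantial --- case is the CR inversion $\mathscr{J}$. Here one invokes Jerison--Lee's analysis of $\mathscr{J}$ in \cite{JL88}: $\mathscr{J}$ is a CR diffeomorphism of $\mathbb{H}^n\setminus\{0\}$ with $\mathscr{J}^*\Theta=|w|^{-2}\Theta$, where $w=t+i|z|^2$ (so $|w|=|\xi|_H^2$); this forces $|J_{\mathscr{J}}(\xi)|=|w|^{-Q}=|\xi|_H^{-2Q}$, whence $u_{\mathscr{J}}(\xi)=|\xi|_H^{-(Q-2)}u(\mathscr{J}\xi)=|w|^{-\frac{Q-2}{2}}u(\mathscr{J}\xi)$ is the classical CR Kelvin transform. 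What is needed is the intertwining law
\[
\Delta_{\! H}u_{\mathscr{J}}(\xi)=|\xi|_H^{-(Q+2)}\,(\Delta_{\! H}u)\bigl(\mathscr{J}(\xi)\bigr),
\]
which is exactly the conformal covariance of the CR Yamabe operator --- equal to $-\Delta_{\! H}$ up to a positive constant on the flat Heisenberg group, since the Webster scalar curvature of $\Theta$ vanishes --- under the conformal CR automorphism $\mathscr{J}$ of conformal factor $|w|^{-2}$. Substituting this relation into $N[u_{\mathscr{J}}]$ and cancelling the powers of $|\xi|_H$, exactly as for the Euclidean Kelvin transform, yields $N[u_{\mathscr{J}}]=N[u]\circ\mathscr{J}$, completing the reduction.

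The main obstacle is concentrated entirely in this last step: unlike the other four generators, $\mathscr{J}$ mixes the $z$ and $t$ variables nonlinearly, so both the Jacobian identity and --- above all --- the transformation law of $\Delta_{\! H}$ under $\mathscr{J}$ require a genuine (by now standard) computation, essentially the one carried out by Jerison and Lee, with no Euclidean shortcut; everything else in the argument is bookkeeping. Finally, since $\mathscr{J}$ --- and hence $\Phi_{\xi,\lambda}^{\beta}$ --- is defined only away from a single point, all the identities above are to be read pointwise, wherever both sides make sense.
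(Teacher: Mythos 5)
Your argument is correct, but it is worth noting that the paper does not prove this lemma at all: it simply quotes it from Li--Monticelli \cite[Remark 2.9]{LM12}, so your reduction-to-generators proof is a genuinely different (self-contained, modulo one citation) route. The structure is sound: the cocycle property $|J_{\psi\circ\phi}|=|J_\psi\circ\phi|\cdot|J_\phi|$ makes $u\mapsto u_\psi$ compatible with composition, so it suffices to check the identity $N[u_\psi]=N[u]\circ\psi$ on the five generating families; your verifications for $\tau_\xi$, $\rho_M$, $\iota$ (unit Jacobian, and $\Delta_{\! H}(u\circ\psi)=(\Delta_{\! H}u)\circ\psi$ via left-invariance, unitary invariance of the horizontal frame, and the sign computation $Y_j(u\circ\iota)=-(Y_ju)\circ\iota$) and for $\delta_\lambda$ (where the exponents $-\tfrac{Q+2}{2}+\tfrac{Q-2}{2}+2=0$ cancel) are all correct. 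For $\mathscr{J}$ you do not reprove the intertwining law $\Delta_{\! H}\bigl(|\xi|_H^{-(Q-2)}u\circ\mathscr{J}\bigr)=|\xi|_H^{-(Q+2)}(\Delta_{\! H}u)\circ\mathscr{J}$ but import it from Jerison--Lee \cite{JL88}; since that covariance of the CR Kelvin transform is a standard published result, this is legitimate, though it means your proof, like the paper's, ultimately rests on an external computation --- the difference is only that you cite \cite{JL88} for one generator rather than \cite{LM12} for the whole statement. One cosmetic slip: with your stated cocycle $(u_\psi)_\phi=u_{\psi\circ\phi}$, the composition step should read $N[u_{\psi\circ\phi}]=N[(u_\psi)_\phi]=N[u_\psi]\circ\phi=N[u]\circ(\psi\circ\phi)$ (you wrote $(u_\phi)_\psi$, which equals $u_{\phi\circ\psi}$); the argument is symmetric in $\psi,\phi$, so nothing is lost, but the indices as written are inconsistent. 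Your closing remark about reading the identities pointwise away from the singular point of $\mathscr{J}$ is an appropriate caveat, since the paper's blanket statement for ``every CR map $\psi:\mathbb{H}^n\to\mathbb{H}^n$'' glosses over the same issue.
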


\begin{cor}\label{cor:conformal_invariance}
If $u\in C^2(\mathbb{H}^n)$ solves \eqref{eqn:cr_yamabe}, then for any $\xi,\lambda,\beta$ the 
generalized Kelvin transform $u_{\xi,\lambda}^{\beta}$ also satisfies \eqref{eqn:cr_yamabe} 
on $\mathbb{H}^n\setminus\{\xi\}$. Moreover, 
if $u\in C^2(\mathbb{H}^n)$ solves
the equation
\[
-\Delta_{\! H} u = u^p \qquad \text{in } \mathbb{H}^n,
\]
then $u_{\xi,\lambda}^{\beta}$ satisfies
\[
-\Delta_{\! H} u_{\xi,\lambda}^{\beta} (\zeta) = \left( \frac{\lambda}{ d_{\! H} (\xi, \zeta) } \right)^{(Q+2) - (Q-2)p} 
\bigl(u_{\xi,\lambda}^{\beta} (\zeta)\bigr)^p \qquad \text{in } \mathbb{H}^n\setminus \{\xi\}.
\]
\end{cor}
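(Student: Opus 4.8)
The plan is to deduce both assertions from Lemma~\ref{lem:Li-Monticelli}, after first recognizing the generalized Kelvin transform $u_{\xi,\lambda}^{\beta}$ as the transformed function $u_{\psi}$ associated with the CR map $\psi:=\Phi_{\xi,\lambda}^{\beta}$. Since, by \eqref{eqn:gndcrinv}, $\psi$ is a composition of the elementary CR maps, its Jacobian determinant factors via the chain rule: the left translations $\tau_{\xi},\tau_{\xi}^{-1}$, the rotation $\rho_{M_{\xi,\beta}}$ (a unitary matrix, hence of real determinant $1$) and the reflection $\iota$ all have $|J|=1$; the dilation $\delta_{\lambda^{2}}$ has $|J|=(\lambda^{2})^{Q}=\lambda^{2Q}$ (recall $Q=2n+2$); and the CR inversion contributes $|J_{\mathscr{J}}(\eta)|=|\eta|_{H}^{-2Q}$, the exact analogue of $|J|=|x|^{-2n}$ for the Euclidean inversion $x\mapsto x/|x|^{2}$ — which is precisely why the dilation in \eqref{eqn:gndcrinv} is taken to be $\delta_{\lambda^{2}}$ rather than $\delta_{\lambda}$. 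Using that $\iota$ preserves the Kor\'anyi norm and that $|\tau_{\xi}^{-1}(\eta)|_{H}=|\xi^{-1}\cdot\eta|_{H}=d_{H}(\eta,\xi)$, one gets
\[
|J_{\psi}(\eta)|=\lambda^{2Q}\,d_{H}(\eta,\xi)^{-2Q},\qquad\text{hence}\qquad |J_{\psi}(\eta)|^{\frac{Q-2}{2Q}}=\Bigl(\frac{\lambda}{d_{H}(\eta,\xi)}\Bigr)^{Q-2},
\]
which is exactly the weight in the definition of $u_{\xi,\lambda}^{\beta}$. Therefore $u_{\xi,\lambda}^{\beta}=u_{\psi}$ on $\mathbb{H}^{n}\setminus\{\xi\}$; in particular it is positive there, and it belongs to $C^{2}(\mathbb{H}^{n}\setminus\{\xi\})$ since $\psi$ is a smooth diffeomorphism of $\mathbb{H}^{n}\setminus\{\xi\}$ onto itself.

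With this identification in hand, I would apply Lemma~\ref{lem:Li-Monticelli} to $\psi$, obtaining on $\mathbb{H}^{n}\setminus\{\xi\}$
\[
\bigl(u_{\xi,\lambda}^{\beta}\bigr)^{-\frac{Q+2}{Q-2}}\Delta_{\! H}u_{\xi,\lambda}^{\beta}=\Bigl(u^{-\frac{Q+2}{Q-2}}\Delta_{\! H}u\Bigr)\circ\psi .
\]
If $-\Delta_{\! H}u=u^{p}$, the right-hand side equals $-\bigl(u\circ\psi\bigr)^{p-\frac{Q+2}{Q-2}}$. Rewriting $u\circ\psi$ in terms of $u_{\xi,\lambda}^{\beta}$ through $u_{\xi,\lambda}^{\beta}(\zeta)=(\lambda/d_{H}(\xi,\zeta))^{Q-2}\,u(\psi(\zeta))$ and collecting exponents — the powers of $u_{\xi,\lambda}^{\beta}$ sum to $\tfrac{Q+2}{Q-2}+\bigl(p-\tfrac{Q+2}{Q-2}\bigr)=p$, while the power of $d_{H}(\xi,\zeta)/\lambda$ is $(Q-2)p-(Q+2)$ — one lands on
\[
-\Delta_{\! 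H}u_{\xi,\lambda}^{\beta}(\zeta)=\Bigl(\frac{\lambda}{d_{H}(\xi,\zeta)}\Bigr)^{(Q+2)-(Q-2)p}\bigl(u_{\xi,\lambda}^{\beta}(\zeta)\bigr)^{p}\qquad\text{in }\mathbb{H}^{n}\setminus\{\xi\},
\]
which is the second claim. Taking $p=\frac{Q+2}{Q-2}$ makes the exponent $(Q+2)-(Q-2)p$ vanish, so the generalized Kelvin transform again solves \eqref{eqn:cr_yamabe} off $\xi$, which is the first claim.

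I do not expect a genuine obstacle here: the one nontrivial ingredient is the Jacobian $|J_{\mathscr{J}}(\eta)|=|\eta|_{H}^{-2Q}$ of the CR inversion, a standard fact from \cite{JL88}, and once the normalization $u_{\xi,\lambda}^{\beta}=u_{\psi}$ is secured the rest is exponent bookkeeping. The only subtlety worth flagging is that $\Phi_{\xi,\lambda}^{\beta}$ must be a \emph{bijection} of $\mathbb{H}^{n}\setminus\{\xi\}$ onto itself for the composition on the right of Lemma~\ref{lem:Li-Monticelli} to make sense on that set; this is immediate from Definition~\ref{def:gndcrinv}, and the stronger statement $d_{H}\!\bigl(\xi,\Phi_{\xi,\lambda}^{\beta}(\zeta)\bigr)=\lambda^{2}/d_{H}(\xi,\zeta)$ (which follows from \eqref{eqn:inv_norm}) shows in addition that $\Phi_{\xi,\lambda}^{\beta}$ interchanges $B_{\lambda}(\xi)\setminus\{\xi\}$ with $\Sigma_{\lambda}(\xi)$.
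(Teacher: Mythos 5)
Your proposal is correct: identifying $u_{\xi,\lambda}^{\beta}$ with $u_{\psi}$ for $\psi=\Phi_{\xi,\lambda}^{\beta}$ via the Jacobian computation $|J_{\psi}(\eta)|^{\frac{Q-2}{2Q}}=\bigl(\lambda/d_{H}(\eta,\xi)\bigr)^{Q-2}$ and then invoking Lemma~\ref{lem:Li-Monticelli} with the stated exponent bookkeeping is exactly how the paper intends the corollary to follow (it is stated there without further proof as a direct consequence of that lemma). The route and the key ingredient (the conformal invariance of $u^{-\frac{Q+2}{Q-2}}\Delta_{\! H}u$ under CR maps, plus $|J_{\mathscr{J}}(\eta)|=|\eta|_{H}^{-2Q}$) coincide with the paper's, so there is nothing to add.
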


\subsection{Basic properties of the generalized CR inversion $\Phi_{\xi, \lambda}^{\, \beta}$}
\label{subsec:Basicproperties}

\begin{lem}\label{lem:gndCRinv1}
For any $\xi\in \mathbb{H}^n$ and any $\zeta \in \mathbb{H}^n\setminus \{\xi\}$,
\begin{equation}\label{eqn:reflectionidentity}
d_{\! H}\! \left(\Phi_{\xi, \lambda}^{\, \beta} (\zeta), \xi\right) \; d_{\! H}\! (\zeta, \xi)=\lambda^{2}.
\end{equation}
\end{lem}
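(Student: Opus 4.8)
The plan is to unwind the definition~\eqref{eqn:gndcrinv} of $\Phi_{\xi,\lambda}^{\beta}$ and track the Kor\'anyi norm through each factor, using that left translation is an isometry for $d_{\! H}$ while the remaining maps have explicit, known effects on $|\cdot|_{\! H}$. Concretely, since $d_{\! H}(\zeta,\xi) = |\tau_\xi^{-1}(\zeta)|_{\! H}$ by definition of the distance and left-invariance, it suffices to prove the statement after conjugating by $\tau_\xi$; that is, with $\eta := \tau_\xi^{-1}(\zeta) = \xi^{-1}\cdot\zeta$, I want to show
\[
\bigl| \bigl(\rho_{M_{\xi,\beta}} \circ \delta_{\lambda^2} \circ \mathscr{J} \circ \iota\bigr)(\eta) \bigr|_{\! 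H} \; |\eta|_{\! H} = \lambda^2,
\]
because the outermost $\tau_\xi$ in $\Phi_{\xi,\lambda}^{\beta}$ and the outermost $\tau_\xi^{-1}$ in the distance cancel in exactly this way: $d_{\! H}(\Phi_{\xi,\lambda}^{\beta}(\zeta),\xi) = |\tau_\xi^{-1}(\Phi_{\xi,\lambda}^{\beta}(\zeta))|_{\! H} = |(\rho_{M_{\xi,\beta}}\circ\delta_{\lambda^2}\circ\mathscr{J}\circ\iota)(\eta)|_{\! H}$.

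Next I would peel off the inner maps one at a time. The inversion $\iota(z,t) = (\bar z, -t)$ manifestly preserves the Kor\'anyi norm, since $|\bar z| = |z|$ and $(-t)^2 = t^2$; so $|\iota(\eta)|_{\! H} = |\eta|_{\! H}$. The CR inversion satisfies $|\mathscr{J}(\cdot)|_{\! H} = |\cdot|_{\! H}^{-1}$ by~\eqref{eqn:inv_norm}, hence $|(\mathscr{J}\circ\iota)(\eta)|_{\! H} = |\eta|_{\! H}^{-1}$. The dilation $\delta_{\lambda^2}$ scales the norm by $\lambda^2$ via~\eqref{eqn:dilationnorm}, giving $|(\delta_{\lambda^2}\circ\mathscr{J}\circ\iota)(\eta)|_{\! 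H} = \lambda^2 |\eta|_{\! H}^{-1}$. Finally, the rotation $\rho_{M_{\xi,\beta}}(z,t) = (M_{\xi,\beta}z,t)$ preserves the norm because $M_{\xi,\beta}\in\mathcal{U}(n)$ implies $|M_{\xi,\beta}z| = |z|$ and the $t$-coordinate is untouched. Composing these, $|(\rho_{M_{\xi,\beta}}\circ\delta_{\lambda^2}\circ\mathscr{J}\circ\iota)(\eta)|_{\! H} = \lambda^2 |\eta|_{\! H}^{-1}$, and multiplying by $|\eta|_{\! H} = d_{\! H}(\zeta,\xi)$ yields $\lambda^2$, as required. (The case $\zeta = \xi$, i.e. $\eta = 0$, is excluded, so no division by zero occurs.)

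I do not anticipate a genuine obstacle here: the identity is essentially bookkeeping, and the only point requiring a shred of care is the very first reduction — verifying that the outer translation $\tau_\xi$ in $\Phi_{\xi,\lambda}^{\beta}$ and the $\tau_\xi^{-1}$ implicit in $d_{\! H}(\cdot,\xi)$ cancel cleanly. This works because $d_{\! H}(\tau_\xi(\eta'),\xi) = |\xi^{-1}\cdot(\xi\cdot\eta')|_{\! H} = |\eta'|_{\! H}$ by associativity of the group law, applied with $\eta' = (\rho_{M_{\xi,\beta}}\circ\delta_{\lambda^2}\circ\mathscr{J}\circ\iota)(\eta)$. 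It is worth noting that the precise choice of the angles $\theta_k$ — and hence of the matrix $M_{\xi,\beta}$ — plays \emph{no role} in this lemma, since only $M_{\xi,\beta}\in\mathcal{U}(n)$ is used; that choice becomes relevant only later, e.g. in Proposition~\ref{prop:justifydefn}. One could also present the whole computation more compactly by observing that $\iota$, $\rho_{M}$ are $d_{\! H}$-isometries fixing the origin, $\mathscr{J}$ inverts the norm, and $\delta_{\lambda^2}$ multiplies it by $\lambda^2$, so the composite map $\psi := \rho_{M_{\xi,\beta}}\circ\delta_{\lambda^2}\circ\mathscr{J}\circ\iota$ satisfies $|\psi(\eta)|_{\! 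H}\,|\eta|_{\! H} = \lambda^2$ identically; conjugating by the isometry $\tau_\xi$ then gives~\eqref{eqn:reflectionidentity}.
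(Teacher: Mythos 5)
Your proposal is correct and follows essentially the same route as the paper: both reduce $d_{\! H}(\Phi_{\xi,\lambda}^{\beta}(\zeta),\xi)$ to $\bigl|(\rho_{M_{\xi,\beta}}\circ\delta_{\lambda^2}\circ\mathscr{J}\circ\iota)(\xi^{-1}\cdot\zeta)\bigr|_{\! H}$ by cancelling the outer translation, then use that $\iota$ and $\rho_{M_{\xi,\beta}}$ preserve the Kor\'anyi norm together with \eqref{eqn:dilationnorm} and \eqref{eqn:inv_norm} to get $\lambda^{2}/|\xi^{-1}\cdot\zeta|_{\! H}$. Your write-up merely makes explicit the norm-invariance steps that the paper's one-line computation leaves implicit, and your remark that the specific choice of $M_{\xi,\beta}$ is irrelevant here is accurate.
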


\begin{proof}
In view of \eqref{eqn:dilationnorm} and \eqref{eqn:inv_norm}, we have
\begin{equation*}\label{eqn:reflectionform}
d_{\! H}\! \left(\Phi_{\xi, \lambda}^{\, \beta} (\zeta), \xi\right)
= \bigl|\delta_{\lambda^{2}} \circ \mathscr{J} \circ \tau_{\xi}^{-1}(\zeta)\bigr|_{\! H}
= \lambda^{2}\,\bigl|\mathscr{J} (\xi^{-1}\cdot\zeta)\bigr|_{\! H}
= \frac{\lambda^{2}}{\bigl|\xi^{-1}\cdot\zeta\bigr|_{\! H}},
\end{equation*}
which is precisely the identity \eqref{eqn:reflectionidentity}.
\end{proof}

\begin{cor}\label{cor:gndCRinv1}
For fixed $\lambda>0$ and $\beta\in \mathbb{R}$, we have $\Phi_{\xi,\lambda}^{\beta} (\zeta) \to \xi$ as $|\zeta|_{\! H}\to \infty$.
\end{cor}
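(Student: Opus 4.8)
The plan is to deduce this immediately from the reflection identity of Lemma \ref{lem:gndCRinv1} together with the triangle inequality for the Kor\'anyi distance. First I would recall that, by the quasi-metric property of $d_H$ (left-invariance plus homogeneity of the Kor\'anyi norm), there is a constant $C\ge 1$ such that
\[
d_H\!\left(\Phi_{\xi,\lambda}^{\beta}(\zeta),\,\xi\right)
\;\ge\; C^{-1} d_H\!\left(\Phi_{\xi,\lambda}^{\beta}(\zeta),\,\zeta_0\right) - d_H(\xi,\zeta_0)
\]
for any fixed reference point $\zeta_0$; equivalently, the statement ``$\Phi_{\xi,\lambda}^{\beta}(\zeta)\to\xi$'' is just the assertion that $d_H(\Phi_{\xi,\lambda}^{\beta}(\zeta),\xi)\to 0$.

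The key step is to observe that Lemma \ref{lem:gndCRinv1} gives the exact formula
\[
d_H\!\left(\Phi_{\xi,\lambda}^{\beta}(\zeta),\,\xi\right)=\frac{\lambda^{2}}{d_H(\zeta,\xi)},
\]
so it suffices to show $d_H(\zeta,\xi)\to\infty$ as $|\zeta|_H\to\infty$. This follows from left-invariance and the quasi-triangle inequality: writing $d_H(\zeta,\xi)=|\xi^{-1}\cdot\zeta|_H$ and using $|\zeta|_H=|\xi\cdot(\xi^{-1}\cdot\zeta)|_H\le C\bigl(|\xi|_H+|\xi^{-1}\cdot\zeta|_H\bigr)$, we get $|\xi^{-1}\cdot\zeta|_H\ge C^{-1}|\zeta|_H-|\xi|_H\to\infty$. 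Hence $d_H(\Phi_{\xi,\lambda}^{\beta}(\zeta),\xi)=\lambda^{2}/d_H(\zeta,\xi)\to 0$, which is the claim.

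There is essentially no obstacle here: the only mild point is the use of the quasi-triangle inequality rather than a genuine triangle inequality, but since we only need that $d_H(\zeta,\xi)\to\infty$ when $|\zeta|_H\to\infty$ (a coarse, one-sided estimate), any of the standard equivalent homogeneous norms on $\mathbb{H}^n$ — or the fact that $\xi$ is fixed while $\zeta$ escapes every compact set — makes this immediate. The content of the corollary is entirely carried by the reflection identity \eqref{eqn:reflectionidentity}.
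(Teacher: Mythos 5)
Your proposal is correct and is exactly the argument the paper intends: the corollary is an immediate consequence of the reflection identity \eqref{eqn:reflectionidentity} of Lemma \ref{lem:gndCRinv1}, since $d_H(\zeta,\xi)\to\infty$ as $|\zeta|_H\to\infty$ for fixed $\xi$, forcing $d_H\bigl(\Phi_{\xi,\lambda}^{\beta}(\zeta),\xi\bigr)=\lambda^{2}/d_H(\zeta,\xi)\to 0$. The paper gives no separate proof precisely because this is the whole content, and your quasi-triangle-inequality justification of $d_H(\zeta,\xi)\to\infty$ is a harmless elaboration.
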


\begin{cor}\label{cor:gndCRinv2}
$\Phi_{\xi, \lambda}^{\, \beta} \left(B_{\lambda}(\xi)\right)=\Sigma_{\lambda}(\xi)$ and $\Phi_{\xi, \lambda}^{\, \beta} \left(\Sigma_{\lambda}(\xi)\right)=B_{\lambda}(\xi)$.
\end{cor}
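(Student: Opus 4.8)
The plan is to read off both equalities from the reflection identity of Lemma~\ref{lem:gndCRinv1} together with the fact that $\Phi_{\xi,\lambda}^{\beta}$ is a bijection of $\mathbb{H}^n\setminus\{\xi\}$. Throughout I interpret $B_{\lambda}(\xi)$ in the statement as $B_{\lambda}(\xi)\setminus\{\xi\}$, since $\Phi_{\xi,\lambda}^{\beta}$ is not defined at its centre (and, by Corollary~\ref{cor:gndCRinv1}, the missing centre heuristically ``corresponds to infinity'').

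First I would record the immediate consequence of Lemma~\ref{lem:gndCRinv1}: since
\[
d_{\! H}\!\bigl(\Phi_{\xi,\lambda}^{\beta}(\zeta),\xi\bigr)\; d_{\! H}(\zeta,\xi)=\lambda^{2}
\qquad\text{for all }\zeta\in\mathbb{H}^n\setminus\{\xi\},
\]
we have $0<d_{\! H}(\zeta,\xi)<\lambda$ if and only if $d_{\! H}(\Phi_{\xi,\lambda}^{\beta}(\zeta),\xi)>\lambda$, and $d_{\! H}(\zeta,\xi)>\lambda$ if and only if $0<d_{\! H}(\Phi_{\xi,\lambda}^{\beta}(\zeta),\xi)<\lambda$. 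The ``only if'' directions already give the inclusions $\Phi_{\xi,\lambda}^{\beta}\bigl(B_{\lambda}(\xi)\setminus\{\xi\}\bigr)\subseteq\Sigma_{\lambda}(\xi)$ and $\Phi_{\xi,\lambda}^{\beta}\bigl(\Sigma_{\lambda}(\xi)\bigr)\subseteq B_{\lambda}(\xi)\setminus\{\xi\}$. For the reverse inclusions I would use that $\Phi_{\xi,\lambda}^{\beta}=\tau_{\xi}\circ\rho_{M_{\xi,\beta}}\circ\delta_{\lambda^2}\circ\mathscr{J}\circ\iota\circ\tau_{\xi}^{-1}$ is a bijection of $\mathbb{H}^n\setminus\{\xi\}$ onto itself: $\tau_{\xi}^{-1}$ carries $\mathbb{H}^n\setminus\{\xi\}$ bijectively onto $\mathbb{H}^n\setminus\{0\}$; each of $\iota$, $\delta_{\lambda^2}$, $\rho_{M_{\xi,\beta}}$ is a diffeomorphism of $\mathbb{H}^n$ fixing the origin and $\mathscr{J}$ is a bijection of $\mathbb{H}^n\setminus\{0\}$ onto itself, so these four factors are bijections of $\mathbb{H}^n\setminus\{0\}$; and finally $\tau_{\xi}$ carries $\mathbb{H}^n\setminus\{0\}$ bijectively back onto $\mathbb{H}^n\setminus\{\xi\}$. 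Consequently, given $\eta\in\Sigma_{\lambda}(\xi)$, the point $\zeta:=(\Phi_{\xi,\lambda}^{\beta})^{-1}(\eta)$ lies in $\mathbb{H}^n\setminus\{\xi\}$, and the displayed identity (with $\eta=\Phi_{\xi,\lambda}^{\beta}(\zeta)$) forces $d_{\! H}(\zeta,\xi)=\lambda^{2}/d_{\! H}(\eta,\xi)<\lambda$, that is, $\zeta\in B_{\lambda}(\xi)\setminus\{\xi\}$; hence $\eta\in\Phi_{\xi,\lambda}^{\beta}\bigl(B_{\lambda}(\xi)\setminus\{\xi\}\bigr)$. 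This proves $\Sigma_{\lambda}(\xi)\subseteq\Phi_{\xi,\lambda}^{\beta}\bigl(B_{\lambda}(\xi)\setminus\{\xi\}\bigr)$, and the inclusion $B_{\lambda}(\xi)\setminus\{\xi\}\subseteq\Phi_{\xi,\lambda}^{\beta}\bigl(\Sigma_{\lambda}(\xi)\bigr)$ follows by the same argument with the two sets interchanged. Together with the inclusions obtained above, this yields the two asserted identities.

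There is no genuine obstacle here: the corollary is a routine unwinding of Lemma~\ref{lem:gndCRinv1}. The only point that requires a little care is the bookkeeping of which single point is deleted by each factor in the composition defining $\Phi_{\xi,\lambda}^{\beta}$, so that the statement ``$\Phi_{\xi,\lambda}^{\beta}$ is a bijection of $\mathbb{H}^n\setminus\{\xi\}$'' is literally correct, together with the (harmless) convention that the centre $\xi$ is excluded from $B_{\lambda}(\xi)$ in the assertion. Note that one does \emph{not} need $\Phi_{\xi,\lambda}^{\beta}$ to be an involution (it need not be, because of the twist by $M_{\xi,\beta}$); plain bijectivity suffices.
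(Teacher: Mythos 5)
Your argument is correct and is essentially the paper's (implicit) one: the corollary is stated without proof as an immediate consequence of the reflection identity of Lemma~\ref{lem:gndCRinv1}, with the reverse inclusions supplied by the fact that $\Phi_{\xi,\lambda}^{\beta}$ maps $\mathbb{H}^n\setminus\{\xi\}$ bijectively onto itself; your bookkeeping of the deleted points and the convention about the centre are fine. One correction to your closing aside: it is not true that $\Phi_{\xi,\lambda}^{\beta}$ ``need not be'' an involution --- the paper's Lemma~\ref{lem:gndCRinv2} proves that it \emph{is} one, precisely because the twist $\rho_{M_{\xi,\beta}}$ intertwines with $\Phi_{0,\lambda}$ as $\rho_{M_{\xi,\beta}}\circ\Phi_{0,\lambda}=\Phi_{0,\lambda}\circ\rho_{M_{\xi,\beta}}^{-1}$; this does not affect your proof (plain bijectivity indeed suffices, and using it keeps the argument independent of the later lemma), but the parenthetical claim as stated is false.
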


\begin{lem}\label{lem:gndCRinv2}
$\Phi_{\xi, \lambda}^{\, \beta}$ is an involution, i.e., 
\[
\bigl( \Phi_{\xi,\lambda}^{\beta} \circ \Phi_{\xi,\lambda}^{\beta} \bigr)(\zeta) ~=~ \zeta 
\qquad\text{for all } \zeta\in \mathbb{H}^n\setminus \{\xi\}.
\]
\end{lem}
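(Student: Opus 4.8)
The plan is to reduce to the origin and then carry out a short explicit computation. Write $M:=M_{\xi,\beta}$ and $\Psi:=\rho_{M}\circ\delta_{\lambda^{2}}\circ\mathscr{J}\circ\iota$, so that \eqref{eqn:gndcrinv} reads $\Phi_{\xi,\lambda}^{\beta}=\tau_{\xi}\circ\Psi\circ\tau_{\xi}^{-1}$; note that once $\xi,\lambda,\beta$ are fixed, the matrix $M$ and the dilation parameter $\lambda^{2}$ no longer depend on the point being evaluated. Hence $\Phi_{\xi,\lambda}^{\beta}\circ\Phi_{\xi,\lambda}^{\beta}=\tau_{\xi}\circ\Psi\circ\Psi\circ\tau_{\xi}^{-1}$, and since $\tau_{\xi}$ is a bijection of $\mathbb{H}^{n}$ carrying $\mathbb{H}^{n}\setminus\{0\}$ onto $\mathbb{H}^{n}\setminus\{\xi\}$, it suffices to prove that $\Psi\circ\Psi=\mathrm{id}$ on $\mathbb{H}^{n}\setminus\{0\}$.

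Next I would write $\Psi$ out in coordinates. Composing \eqref{invers}, \eqref{CRinvers}, \eqref{dil}, \eqref{rot} in this order and abbreviating $w:=t+i|z|^{2}$ (which vanishes only at the origin, so $\Psi$ is well defined on $\mathbb{H}^{n}\setminus\{0\}$), one gets
\[
\Psi(z,t)=\left(-\,\frac{\lambda^{2}\,M\bar z}{\bar w}\,,\ \frac{\lambda^{4}\,t}{|w|^{2}}\right),
\]
the only bookkeeping being that $\iota$ replaces $z$ by $\bar z$ and flips the sign of $t$, that $|\bar z|=|z|$, and that $\delta_{\lambda^{2}}$ scales $z$ by $\lambda^{2}$ and $t$ by $\lambda^{4}$. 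Then I would substitute this formula into itself: putting $(z_{1},t_{1}):=\Psi(z,t)$ and using that $M$ is unitary, $|z_{1}|^{2}=\lambda^{4}|z|^{2}/|w|^{2}$, hence $w_{1}:=t_{1}+i|z_{1}|^{2}=\lambda^{4}w/|w|^{2}=\lambda^{4}/\bar w$, so $\bar w_{1}=\lambda^{4}/w$ and $|w_{1}|^{2}=\lambda^{8}/|w|^{2}$; and because $M$ is \emph{diagonal}, $\overline{M\bar z}=\bar M z$, whence $\bar z_{1}=-\lambda^{2}\bar M z/w$. Plugging in,
\[
-\frac{\lambda^{2}M\bar z_{1}}{\bar w_{1}}=-\frac{\lambda^{2}M\cdot\bigl(-\lambda^{2}\bar M z/w\bigr)}{\lambda^{4}/w}=M\bar M z=z,\qquad \frac{\lambda^{4}t_{1}}{|w_{1}|^{2}}=\frac{\lambda^{4}\cdot\bigl(\lambda^{4}t/|w|^{2}\bigr)}{\lambda^{8}/|w|^{2}}=t,
\]
using $M\bar M=I$ for a diagonal unitary matrix. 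Thus $\Psi(\Psi(z,t))=(z,t)$, which is what we wanted.

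An alternative, coordinate-free route is to record the commutation rules $\iota\circ\rho_{M}=\rho_{\bar M}\circ\iota$, $\rho_{N}\circ\mathscr{J}=\mathscr{J}\circ\rho_{N}$ for $N\in\mathcal{U}(n)$, $\iota\circ\delta_{\mu}=\delta_{\mu}\circ\iota$, $\mathscr{J}\circ\delta_{\mu}=\delta_{1/\mu}\circ\mathscr{J}$, together with the identity $\mathscr{J}\circ\iota\circ\mathscr{J}=\iota$ (equivalently $(\mathscr{J}\circ\iota)^{2}=\mathrm{id}$), and then to collapse $\Psi\circ\Psi=\rho_{M\bar M}\circ\delta_{\lambda^{2}}\circ\delta_{1/\lambda^{2}}\circ(\mathscr{J}\circ\iota)^{2}=\mathrm{id}$, again with $M\bar M=I$.

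I do not expect a genuine obstacle here — the computation is short. The two points that require care are the distinction between complex conjugation and transposition, so that $M\bar M=I$ really holds (this uses that $M_{\xi,\beta}$ is diagonal, not merely unitary), and the verification that every map appearing in the composition is defined on $\mathbb{H}^{n}\setminus\{0\}$, i.e. that $t+i|z|^{2}\ne 0$ away from the origin.
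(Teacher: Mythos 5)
Your proof is correct and follows essentially the same route as the paper: both conjugate by $\tau_{\xi}$ to reduce to showing that $\rho_{M_{\xi,\beta}}\circ\delta_{\lambda^{2}}\circ\mathscr{J}\circ\iota$ is an involution, you by a direct coordinate computation (correctly isolating that $M\bar M=I$ needs $M_{\xi,\beta}$ diagonal/symmetric, not just unitary), the paper by the commutation identity $\rho_{M}\circ\Phi_{0,\lambda}=\Phi_{0,\lambda}\circ\rho_{M}^{-1}$ together with $\Phi_{0,\lambda}^{2}=\mathrm{id}$. Your sketched ``coordinate-free'' alternative is in substance the paper's own argument, so there is no gap to report.
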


\begin{proof}
Write $\Phi_{0,\lambda} := \delta_{\lambda^2} \circ \mathscr{J} \circ \iota$ and note that
\[
\Phi_{\xi,\lambda}^{\beta} = \tau_{\xi} \circ \rho_{M_{\xi,\beta}} \circ \Phi_{0,\lambda} \circ \tau_{\xi}^{-1}.
\]
Direct verifications show that
\[
\rho_{M_{\xi,\beta}} \circ \Phi_{0,\lambda} = \Phi_{0,\lambda} \circ \rho_{M_{\xi,\beta}}^{-1},
\]
and 
\[
\bigl( \Phi_{0,\lambda} \circ \Phi_{0,\lambda} \bigr)(\zeta) = \zeta \quad \forall \zeta \in \mathbb{H}^n \setminus \{0\}.
\]
It follows that
\begin{align*}
\bigl( \Phi_{\xi,\lambda}^{\beta} \circ \Phi_{\xi,\lambda}^{\beta} \bigr)(\zeta)
&= \Bigl( \tau_{\xi} \circ \rho_{M_{\xi,\beta}} \circ \Phi_{0,\lambda} \circ \rho_{M_{\xi,\beta}} \circ \Phi_{0,\lambda} \circ \tau_{\xi}^{-1} \Bigr)(\zeta) \\
&= \Bigl( \tau_{\xi} \circ \rho_{M_{\xi,\beta}} \circ \Phi_{0,\lambda} \circ \Phi_{0,\lambda} \circ \rho_{M_{\xi,\beta}}^{-1} \circ \tau_{\xi}^{-1} \Bigr)(\zeta) = \zeta
\end{align*}
for every $\zeta \in \mathbb{H}^n \setminus \{\xi\}$.
\end{proof}

\subsection{Proof of Proposition \ref{prop:justifydefn}}
\label{subsec:Pf_justified}

Fix $\xi=(z',t')\in \mathbb{H}^n$. A simple calculation show that
\begin{equation}\label{eqn:fbetacomptau}
\left( f_\beta \circ \tau_{(z',t')}\right)(z,t)
=\left|w +  2i\langle z, z' \rangle + w' + i\beta\right|^{-\nu/2}
\end{equation}
holds for every $(z,t)\in \mathbb{H}^n\setminus \{(0,0)\}$, where
\[
w:=t + i|z|^2  \quad \text{and} \quad w':= t' + i\left|z'\right|^2.
\]
It follows that
\begin{align*}
& \left(\frac{\lambda^{2}}{|w|}\right)^{\nu/2} 
\left(f_\beta \circ \tau_{(z',t')} \circ \rho_{M_{\xi,\beta}} \circ \delta_{\lambda^{2}}
\circ \mathscr{J} \circ \iota \right) (z,t)\\
& \quad = \left(\frac{\lambda^{2}}{|w|}\right)^{\nu/2} \left( f_\beta \circ \tau_{(z',t')}\right)
\left(\frac {\lambda^2 M_{\xi,\beta} \overline{z}} {-\overline{w}},
\frac {\lambda^4 t} {|w|^2} \right)\\
& \quad =  \left(\frac{\lambda^{2}}{|w|}\right)^{\nu/2} \left| \frac {\lambda^4 t} {|w|^2} +
i\left| \frac {\lambda^2 M_{\xi,\beta} \overline{z}} {-\overline{w}}\right|^{2}
+2i \left\langle \frac {\lambda^2 M_{\xi,\beta} \overline{z}} {-\overline{w}}, z'\right\rangle 
+ w' + i\beta \right|^{-\frac{\nu}{2}}\\
&\quad = \left(\frac{\lambda^{2}}{|w|}\right)^{\nu/2}
\left| \frac{\lambda^{4}}{\overline{w}} - \frac{2 \lambda^{2} i}{\overline{w}}
\left\langle \overline{z}, M_{\xi,\beta}^{-1} z' \right\rangle + w' 
+ i\beta\right|^{-\frac{\nu}{2}}\\
&\quad =\left|\lambda^{2}- 2 i\, \left\langle \overline{z}, M_{\xi,\beta}^{-1} z' \right\rangle + \frac{\overline{w}}{\lambda^{2}} \left(w' + i\beta\right)\right|^{-\frac{\nu}{2}} \\
&\quad =\left|\lambda^{2} + 2 i\, \left\langle z, M_{\xi,\beta} \overline{z'} \right\rangle + \frac{w}{\lambda^{2}} \left(\overline{w'} - i\beta\right)\right|^{-\frac{\nu}{2}} \\
&\quad =\left(\frac{\lambda^{2}}{\left|\overline{w'}-i\beta\right|}\right)^{\frac{\nu}{2}}
\left| w + 2i \,\left\langle z, \frac {\lambda^{2}}{w'+i\beta} M_{\xi,\beta} \overline{z'} \right\rangle + \frac{\lambda^4}{\overline{w'}-i\beta}\right|^{-\frac{\nu}{2}}
\end{align*}
Taking $\lambda=\lambda\left(\xi\right)=\left|\overline{w'}-i\beta\right|^{1/2}$, we obtain
\begin{align}\label{eqn:fbeta1}
& \left(\frac{\lambda^{2}}{|w|}\right)^{\nu/2} 
\left(f_\beta \circ \tau_{(z',t')} \circ \rho_{M_{\xi,\beta}} \circ \delta_{\lambda^{2}}
\circ \mathscr{J} \circ \iota \right) (z,t) \notag\\
& \qquad \quad ~=~\left| w + 2i \left\langle z, \frac{\left|w'+i\beta\right|}{w'+i\beta} 
M_{\xi,\beta} \overline{z'} \right\rangle w' + i\beta \right|^{-\frac{\nu}{2}}.
\end{align}
Recall that the unitary matrix $M_{\xi,\beta}$ is defined as in \eqref{eqn:Mxibeta}, so that
\[
\frac{\left|w'+i\beta\right|}{w'+i\beta} M_{\xi,\beta} \overline{z'} = z'.
\]
Plugging this into \eqref{eqn:fbeta1}, together with \eqref{eqn:fbetacomptau}, we obtain
\[
\left(\frac{\lambda^{2}}{|(z,t)|_{\! H}^2 }\right)^{\nu/2} 
\left(f_\beta \circ \tau_{(z',t')} \circ \rho_{M_{\xi,\beta}} \circ \delta_{\lambda^{2}}
\circ \mathscr{J} \circ \iota \right) (z,t) 
=\left( f_\beta \circ \tau_{(z',t')}\right)(z,t).
\]
Replacing $(z,t)$ with $\tau_{(z',t')}^{-1} (z,t)$ in the above yields
\[
\left(\frac{\lambda}{\big|\tau_{(z',t')}^{-1} (z,t) \big|_{\! H} }\right)^{\nu} 
f_\beta \left(\Phi_{(z',t'), \lambda(z',t')}^{\, \beta} (z,t)\right)
=f_\beta (z,t),
\]
which completes the proof.

\section{Proof of Theorem \ref{thm:lnzlem1}}
\label{sec:PfofLNZ1}

We first prove that for every $\zeta \neq \eta$ in $\mathbb{H}^{n}$ and $\lambda > 0$, there exists $\xi^{\ast} = \xi^{\ast}(\lambda) \in \mathbb{H}^n$ such that $\eta = \Phi_{\xi^{\ast},\lambda}^{\,\beta}(\zeta)$.

Consider the map
\[
T(\xi) := \eta \cdot \left( \Phi_{\xi,\lambda}^{\, \beta} (\zeta)\right)^{-1} \cdot \xi. 
\]
A direct estimate gives
\[
|T(\xi)|_{\! H} ~\leq~ |\eta|_{\! H} + \frac {\lambda^2}{d_{\! H}\! (\xi,\zeta)}.
\]
so $|T(\xi)|_{\! H} \leq R$ holds for all $\xi$ with $|\xi|_{\! H} = R$, provided $R$ is sufficiently large. Applying the Leray-Schauder fixed point theorem, we obtain a fixed point $\xi^{\ast} \in \overline{B_R(0)}$ for $T$, from which the desired identity $\eta = \Phi_{\xi^{\ast},\lambda}^{\,\beta}(\zeta)$ follows.

Then \eqref{eqn:lnzinq} becomes
\[
\bigg(\frac{\lambda}{d_{\! H}\big (\xi^{\ast},\zeta\big)}\bigg)^{\nu} f(\eta) ~\leq~ f(\zeta)
\qquad \forall \lambda>0.
\]
In view of \eqref{eqn:reflectionidentity}, we have
\[
\lim_{\lambda\to \infty} \frac{\lambda}{d_{\! H}\! \big(\xi^{\ast},\zeta\big)} ~=~
\lim_{\lambda\to \infty} \sqrt{\frac{d_{\! H}\! \big(\xi^{\ast},\eta\big)}
{d_{\! H}\! \big(\xi^{\ast},\zeta\big)}} ~=~ 1,
\]
and hence $f(\eta)\leq f(\zeta)$.
Theorem \ref{thm:lnzlem1} follows since $\eta \neq \zeta$ are arbitrary.

\section{Preparations for Proof of Theorem \ref{thm:lnzlem2}}
\label{sec:prePfofLNZ2}

\begin{prop}\label{pro:lnzlem2}
Let $n \geq 1$ and $\beta, \nu \in \mathbb{R}$. Suppose that $f \in C^{0}(\mathbb{H}^{n})$ satisfies: for every $\xi \in \mathbb{H}^{n}$, there exists $\lambda(\xi)>0$ such that
\begin{equation}\label{eqn:lnzfunceqn}
\left(\frac{\lambda(\xi)}{d_{\! H} \!\left(\xi,\zeta\right)}\right)^{\nu} f\left(\Phi_{\xi,\lambda(\xi)}^{\beta}(\zeta)\right) 
~=~ f(\zeta), \qquad \forall \zeta \in \mathbb{H}^{n}\setminus\{\xi\}.
\end{equation}
Then 
\begin{equation}\label{eqn:alpha0}
\alpha_f :=\lim _{|\zeta|_{\! H} \rightarrow \infty} |\zeta|_{\! H}^{\nu} f(\zeta) 
\end{equation}
exists and
\begin{equation}\label{eqn:alphaf1}
\lambda(\xi)^{\nu} f(\xi) = \alpha_f, \quad \forall \xi \in \mathbb{H}^{n} . 
\end{equation}
\end{prop}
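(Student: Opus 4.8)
The plan is to read off both conclusions directly from the functional equation \eqref{eqn:lnzfunceqn} by letting $\zeta\to\infty$ with the base point $\xi$ held fixed. First I would fix $\xi\in\mathbb{H}^n$, abbreviate $\lambda:=\lambda(\xi)$ and $\Phi:=\Phi_{\xi,\lambda}^{\beta}$, and multiply \eqref{eqn:lnzfunceqn} by $|\zeta|_{\! H}^{\nu}$ to rewrite it in the form
\[
|\zeta|_{\! H}^{\nu}\,f(\zeta)=\lambda^{\nu}\Bigl(\frac{|\zeta|_{\! H}}{d_{\! H}(\xi,\zeta)}\Bigr)^{\nu} f\bigl(\Phi(\zeta)\bigr),\qquad \zeta\in\mathbb{H}^n\setminus\{\xi\}.
\]
The right-hand side is now arranged so that its limit as $|\zeta|_{\! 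H}\to\infty$ can be computed factor by factor.

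Next I would control the two $\zeta$-dependent factors on the right. For the ratio $|\zeta|_{\! H}/d_{\! H}(\xi,\zeta)$: writing $\zeta^{-1}=(\zeta^{-1}\cdot\xi)\cdot\xi^{-1}$ and using the triangle inequality for the Kor\'anyi gauge (together with $|\eta^{-1}|_{\! H}=|\eta|_{\! H}$) gives $\bigl|\,d_{\! H}(\xi,\zeta)-|\zeta|_{\! H}\,\bigr|\le|\xi|_{\! H}$, whence $d_{\! H}(\xi,\zeta)/|\zeta|_{\! H}\to1$ and so $\bigl(|\zeta|_{\! H}/d_{\! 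H}(\xi,\zeta)\bigr)^{\nu}\to1$ as $|\zeta|_{\! H}\to\infty$. (If one prefers to avoid quoting the triangle inequality, this ratio can be shown to tend to $1$ by a direct computation from the explicit form of $\zeta^{-1}\cdot\xi$.) For the factor $f(\Phi(\zeta))$: by Corollary \ref{cor:gndCRinv1} we have $\Phi(\zeta)\to\xi$ as $|\zeta|_{\! H}\to\infty$, and since $f\in C^0(\mathbb{H}^n)$ this yields $f(\Phi(\zeta))\to f(\xi)$.

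Combining these observations, the right-hand side of the displayed identity converges to $\lambda(\xi)^{\nu}f(\xi)$ as $|\zeta|_{\! H}\to\infty$, and therefore so does the left-hand side. This simultaneously establishes that the limit $\alpha_f=\lim_{|\zeta|_{\! H}\to\infty}|\zeta|_{\! H}^{\nu}f(\zeta)$ in \eqref{eqn:alpha0} exists and that it equals $\lambda(\xi)^{\nu}f(\xi)$, which is \eqref{eqn:alphaf1}; in particular $\lambda(\xi)^{\nu}f(\xi)$ does not depend on $\xi$. The argument is quite short, and there is no genuinely hard step; the only point demanding a little care is the asymptotic $d_{\! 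H}(\xi,\zeta)/|\zeta|_{\! H}\to1$, together with the observation that each of the three limits above is an honest limit (not merely a limit along a subsequence), so that $\alpha_f$ is unambiguously defined.
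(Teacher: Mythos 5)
Your proposal is correct and follows essentially the same route as the paper: rewrite \eqref{eqn:lnzfunceqn} as $|\zeta|_{\! H}^{\nu}f(\zeta)=\bigl(|\zeta|_{\! H}/d_{\! H}(\xi,\zeta)\bigr)^{\nu}\lambda(\xi)^{\nu}f\bigl(\Phi_{\xi,\lambda(\xi)}^{\beta}(\zeta)\bigr)$, send $|\zeta|_{\! H}\to\infty$, and use Corollary \ref{cor:gndCRinv1} together with the continuity of $f$. The only difference is that you spell out the asymptotic $d_{\! H}(\xi,\zeta)/|\zeta|_{\! H}\to 1$, which the paper leaves implicit; this is a welcome but minor addition.
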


\begin{rem}\label{rmk:lnzlem2}
Note that \eqref{eqn:alpha0} and \eqref{eqn:alphaf1} imply that the function $\xi \mapsto \lambda(\xi)$ is independent of $\beta$. Also, since $\alpha_f>0$, we find that the symmetry \eqref{eqn:lnzfunceqn} implies 
that $f(\zeta)$ decays as $O(|\zeta|_{\! H}^{-\nu})$ asymptotically. 

\end{rem}

\begin{proof}
Rewrite \eqref{eqn:lnzfunceqn} as
\begin{equation}\label{eqn:lnzfunceqn2}
|\zeta|_{\! H}^{\nu} f(\zeta) = \left(\frac{|\zeta|_{\! H}}{d_{\! H}\! \left(\xi,\zeta\right)}\right)^{\nu} 
\lambda(\xi)^{\nu} f\left(\Phi_{\xi,\lambda(\xi)}^{\beta}(\zeta)\right). 
\end{equation}
Note by Corollary \ref{cor:gndCRinv1} that $\Phi_{\xi,\lambda(\xi)}^{\beta} (\zeta) \to \xi$ as $|\zeta|_{\! H}\to \infty$. Thus, by the continuity of $f$,
\begin{equation}
\alpha_f:=\lim _{|\zeta|_{\! H} \rightarrow \infty} |\zeta|_{\! H}^{\nu} f(\zeta) =\lambda(\xi)^{\nu} f(\xi), \quad \forall \xi \in \mathbb{H}^{n}.
\end{equation}
\end{proof}

\begin{lem}\label{lem:fixedpoint1}
Let $f$ be the function from Theorem~\ref{thm:lnzlem2} and $\alpha_f$, $\beta_f$ be defined as in \eqref{eqn:alphabeta}.
For any $\epsilon>0$, there exists $C_{\epsilon}$ such that for any $|\zeta|_{\! H} \geq C_{\epsilon}$, there exists a point $\xi^{\ast}=\xi^{\ast}(\zeta) \in \mathbb{H}^n$ satisfying
\begin{equation}\label{fixedpoint1}
\Phi_{\xi^{\ast}, \lambda(\xi^{\ast})}^{\beta_f}(\zeta) = 0 
\quad \text { and } \quad \big|\xi^{\ast}\big|_{\! H} \leq \epsilon. 
\end{equation}
\end{lem}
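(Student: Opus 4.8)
The plan is to realise the equation $\Phi_{\xi,\lambda(\xi)}^{\beta_f}(\zeta)=0$ as a fixed-point equation $\xi=T(\xi)$ on the Kor\'anyi ball $\overline{B_\epsilon(0)}$ and to solve it by the Leray--Schauder fixed point theorem, exactly as in Section~\ref{sec:PfofLNZ1}. First some bookkeeping on $\lambda$: by Proposition~\ref{pro:lnzlem2}, $\alpha_f=\lim_{|\zeta|_H\to\infty}|\zeta|_H^{\nu}f(\zeta)$ exists and $\lambda(\xi)^{\nu}f(\xi)=\alpha_f$ for every $\xi$. Discarding the trivial case $f\equiv0$, we get $\alpha_f>0$, hence $f>0$ everywhere, $\beta_f=\alpha_f^{2/\nu}f(0)^{-2/\nu}>0$, and $\lambda(\xi)=\bigl(\alpha_f/f(\xi)\bigr)^{1/\nu}$ is a continuous, strictly positive function of $\xi$; in particular $\Lambda_\epsilon:=\max_{\overline{B_\epsilon(0)}}\lambda<\infty$.

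Fix $\zeta$ with $|\zeta|_H>\epsilon$, so that $\xi^{-1}\!\cdot\zeta\neq0$ for all $\xi\in\overline{B_\epsilon(0)}$. Using $\Phi_{\xi,\lambda}^{\beta}=\tau_{\xi}\circ\rho_{M_{\xi,\beta}}\circ\delta_{\lambda^2}\circ\mathscr{J}\circ\iota\circ\tau_{\xi}^{-1}$ and applying $\tau_\xi^{-1}$ and then $\rho_{M_{\xi,\beta_f}^{-1}}$ to both sides of $\Phi_{\xi,\lambda(\xi)}^{\beta_f}(\zeta)=0$ (recall $\tau_\xi^{-1}(0)=\xi^{-1}$), the equation rewrites as $H(\xi)=G(\xi)$, where
\[
H(\xi):=\bigl(\delta_{\lambda(\xi)^2}\circ\mathscr{J}\circ\iota\bigr)(\xi^{-1}\!\cdot\zeta),
\qquad
G(\xi):=\rho_{M_{\xi,\beta_f}^{-1}}(\xi^{-1}).
\]
Computing $M_{\xi,\beta_f}^{-1}z'$ explicitly (with $\xi=(z',t')$ and $w_{\beta_f}:=t'+i(|z'|^2+\beta_f)$) gives $G(\xi)=\bigl(-\tfrac{\overline{w_{\beta_f}}}{|w_{\beta_f}|}\,\overline{z'},\,-t'\bigr)$. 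Here is the only delicate point: $M_{\xi,\beta_f}$ is discontinuous in $\xi$ across each set $\{z'_k=0\}$, but the vector it multiplies is $z'$, which vanishes there, so $\xi\mapsto G(\xi)$ is in fact \emph{continuous}; since $\operatorname{Im}w_{\beta_f}\ge\beta_f>0$, the formula shows that $G$ is a continuous bijection of $\mathbb{H}^n$ with continuous inverse which preserves the Kor\'anyi norm. Hence $\Phi_{\xi,\lambda(\xi)}^{\beta_f}(\zeta)=0$ is equivalent to $\xi=T(\xi)$ with $T:=G^{-1}\circ H$, a continuous map on $\overline{B_\epsilon(0)}$.

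It remains to make $T$ a self-map of $\overline{B_\epsilon(0)}$. Since $\iota$ preserves the Kor\'anyi norm, $|\mathscr{J}(\eta)|_H=|\eta|_H^{-1}$ and $|\delta_{\lambda^2}(\eta)|_H=\lambda^2|\eta|_H$ by \eqref{eqn:inv_norm}--\eqref{eqn:dilationnorm}, and $G^{-1}$ is norm-preserving, we get $|T(\xi)|_H=|H(\xi)|_H=\lambda(\xi)^2/d_H(\xi,\zeta)$. For $|\xi|_H\le\epsilon$ this is at most $\Lambda_\epsilon^2/(|\zeta|_H-\epsilon)$, so with $C_\epsilon:=\epsilon+\Lambda_\epsilon^2/\epsilon$ we have $T(\overline{B_\epsilon(0)})\subseteq\overline{B_\epsilon(0)}$ whenever $|\zeta|_H\ge C_\epsilon$. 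The Leray--Schauder theorem then provides $\xi^\ast=\xi^\ast(\zeta)$ in the compact convex set $\overline{B_\epsilon(0)}$ with $T(\xi^\ast)=\xi^\ast$, i.e. $\Phi_{\xi^\ast,\lambda(\xi^\ast)}^{\beta_f}(\zeta)=0$ and $|\xi^\ast|_H\le\epsilon$ (and necessarily $\xi^\ast\neq0$, since $\Phi_{0,\lambda(0)}^{\beta_f}(\zeta)=(\delta_{\lambda(0)^2}\circ\mathscr{J}\circ\iota)(\zeta)\neq0$). I expect the reformulation step to be the main obstacle: the map $T(\xi)=\bigl(\Phi_{\xi,\lambda(\xi)}^{\beta_f}(\zeta)\bigr)^{-1}\!\cdot\xi$ copied verbatim from Section~\ref{sec:PfofLNZ1} fails here on two counts — it is discontinuous (through $M_{\xi,\beta_f}$) and it only yields $|T(\xi)|_H\le\lambda(\xi)^2/d_H(\xi,\zeta)+2|\xi|_H$, the spurious term $2|\xi|_H$ ruining the self-mapping property on every ball; peeling $\rho_{M_{\xi,\beta_f}}$ off of $\Phi$ and absorbing it into the norm-preserving homeomorphism $G$ is precisely what cures both defects at once, and verifying these two facts about $G$ is then the routine but essential computation.
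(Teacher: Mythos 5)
Your proof is correct, and it follows the same scheme as the paper's: a fixed-point argument on $\overline{B_\epsilon(0)}$ driven by the norm bound $\lambda(\xi)^2/d_H(\zeta,\xi)$ and a choice of $C_\epsilon$ of the form $\epsilon+\sup_{\overline{B_\epsilon(0)}}\lambda^2/\epsilon$, concluded by Schauder (Brouwer suffices). The genuine difference is your reformulation: instead of the paper's map $T(\xi)=\bigl(\Phi_{\xi,\lambda(\xi)}^{\beta_f}(\zeta)\bigr)^{-1}\cdot\xi$, you solve $H(\xi)=G(\xi)$ with the rotation peeled off and absorbed into the norm-preserving homeomorphism $G$. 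Two comments on your closing critique of the paper's map. First, the alleged ``spurious $2|\xi|_H$'' term is not there: by the definition $d_H(\xi,\eta)=|\eta^{-1}\cdot\xi|_H$ and the reflection identity \eqref{eqn:reflectionidentity}, one has $|T(\xi)|_H=d_H\bigl(\xi,\Phi_{\xi,\lambda(\xi)}^{\beta_f}(\zeta)\bigr)=\lambda(\xi)^2/d_H(\zeta,\xi)$ exactly; indeed, since $\tau_\xi$ is a left translation, $T(\xi)=\bigl[(\rho_{M_{\xi,\beta_f}}\circ\delta_{\lambda(\xi)^2}\circ\mathscr{J}\circ\iota)(\xi^{-1}\cdot\zeta)\bigr]^{-1}$, whose Kor\'anyi norm is $\lambda(\xi)^2/d_H(\zeta,\xi)$, which is precisely the estimate the paper states, so the self-mapping property is not an issue. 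Second, your continuity concern is legitimate: the same formula shows that the paper's $T$ inherits the discontinuity of $\xi\mapsto M_{\xi,\beta_f}$ across the sets $\{z_k'=0\}$, because the jump in $e^{i\theta_k}$ multiplies the generally nonvanishing $k$-th component of $\delta_{\lambda(\xi)^2}\mathscr{J}\iota(\xi^{-1}\cdot\zeta)$ rather than $z_k'$; in your $G(\xi)=\bigl(-\overline{w_{\beta_f}}\,\overline{z'}/|w_{\beta_f}|,\,-t'\bigr)$ the jump multiplies $z_k'$ itself and cancels, so your $T=G^{-1}\circ H$ is continuous and the fixed-point theorem applies without further comment. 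In short: same approach and same quantitative ingredients as the paper, with a more careful choice of the fixed-point map that makes the continuity hypothesis of the fixed-point theorem transparent; your preliminary reductions ($\alpha_f>0$, hence $f>0$, and continuity and positivity of $\lambda(\cdot)$ via $\lambda(\xi)^{\nu}f(\xi)=\alpha_f$) are consistent with Proposition~\ref{pro:lnzlem2} and Remark~\ref{rmk:lnzlem2}.
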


\begin{proof}
We know from \eqref{eqn:alphaf1} that $\lambda(\xi)= \left(\alpha_f\right)^{\frac {1}{\nu}} f(\xi)^{-\frac {1}{\nu}}$ for all $\xi \in \mathbb{H}^{n}$. For any $\epsilon \in(0,1)$, pick $C_{\epsilon}>1$ so that
\[
\frac {\left(\alpha_f\right)^{\frac {2}{\nu}}} {C_{\epsilon}-\epsilon}\, \max_{|\xi|_{\! H} \leq \epsilon} f(\xi)^{-\frac {2}{\nu}} < \epsilon.
\]
For fixed $\zeta$ with $|\zeta|_{\! H} \geq C_{\epsilon}$, we consider the map $T:\overline{B_{\epsilon}(0)} \to 
\mathbb{H}^n$ given by
\[
T(\xi):= \left( \Phi_{\xi,\lambda(\xi)}^{\beta_f} (\zeta)\right)^{-1} \cdot \xi
\]
We have
\begin{align*}
\max_{|\xi|_{\! H} \leq \epsilon} \left|T(\xi)\right|_{\! H} 
~=~ \max_{|\xi|_{\! H} \leq \epsilon} \frac {\lambda(\xi)^2} {d_{\! H}\! \left(\zeta,\xi\right)} 
~\leq~ \frac {\left(\alpha_f\right)^{\frac {2}{\nu}}} {C_{\epsilon}-\epsilon}\, \max_{|\xi|_{\! H} \leq \epsilon} f(\xi)^{-\frac {2}{\nu}}
~<~ \epsilon.
\end{align*}
Thus, by Schauder's fixed point theorem, there exists $\xi^{\ast}=\xi^{\ast}(\zeta)\in \overline{B_{\epsilon}(0)}$ such that 
$T(\xi^{\ast})=\xi^{\ast}$, which clearly implies \eqref{fixedpoint1}.
\end{proof}

\begin{lem}\label{lem:asymp}
Let $f$ be the function from Theorem~\ref{thm:lnzlem2}. Suppose for small $\delta>0$ the maps $z \colon (-\delta, \delta) \setminus \{0\} \to \mathbb{C}^n$ and $t \colon (-\delta, \delta) \setminus \{0\} \to \mathbb{R}$ are continuous and satisfy one of the following two asymptotic conditions:
\begin{enumerate}
\item[(A1)]\, $z(h) = O(1)$ and $|t(h)| \approx |h|^{-1}$;
\item[(A2)]\, $t(h) = O(1)$ and $|z(h)| \approx |h|^{-1}$.
\end{enumerate}
Here, $a(h) \approx b(h)$ means that the ratio $a(h)/b(h)$ is bounded above and below by positive constants as $h \to 0$.

Then 
\begin{equation}\label{eqn:asym2}
\lim_{h \to 0} \, \frac{1}{h} \Big\{ \bigl|(z(h), t(h))\bigr|_{\! H}^{\nu}  f(z(h), t(h)) - \alpha_f \Big\} = 0.
\end{equation}
\end{lem}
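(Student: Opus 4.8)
The idea is to use the functional equation \eqref{eqn:lnzfunceqn} to pull the point $(z(h),t(h))$, which is running off to infinity along a ``degenerate'' direction as $h\to 0$, back to a point near a \emph{fixed} finite location via a suitable generalized CR inversion, and then to Taylor-expand. Concretely, fix an arbitrary $\xi\in\mathbb{H}^n$ and write $\zeta(h)=(z(h),t(h))$. By Proposition~\ref{pro:lnzlem2}, the equation \eqref{eqn:lnzfunceqn2} with the choice $\beta=\beta_f$ reads
\[
\bigl|\zeta(h)\bigr|_{\! H}^{\nu}\, f\bigl(\zeta(h)\bigr)
= \left(\frac{\bigl|\zeta(h)\bigr|_{\! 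H}}{d_{\! H}\!\bigl(\xi,\zeta(h)\bigr)}\right)^{\!\nu}
\alpha_f\,\frac{f\bigl(\Phi_{\xi,\lambda(\xi)}^{\beta_f}(\zeta(h))\bigr)}{f(\xi)} .
\]
As $h\to0$ we have $\bigl|\zeta(h)\bigr|_{\! H}\to\infty$, so $\Phi_{\xi,\lambda(\xi)}^{\beta_f}(\zeta(h))\to\xi$ by Corollary~\ref{cor:gndCRinv1}; thus the right-hand side tends to $\alpha_f$, recovering \eqref{eqn:alpha0}. What I need is the \emph{rate}: I must show that the bracketed quotient $\bigl|\zeta(h)\bigr|_{\! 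H}/d_{\! H}(\xi,\zeta(h))$ and the function ratio $f\bigl(\Phi_{\xi,\lambda(\xi)}^{\beta_f}(\zeta(h))\bigr)/f(\xi)$ each equal $1+o(h)$, i.e. that their deviation from $1$ is not merely $o(1)$ but $o(h)$.

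\textbf{Key steps.} First I would compute $d_{\! H}(\xi,\zeta(h))^4 = \bigl|z(h)-z'+ \text{(lower order)}\bigr|^4 + \bigl(t(h)-t'+2\operatorname{Im}\langle z(h),z'\rangle\bigr)^2$ explicitly for $\xi=(z',t')$. Under hypothesis (A1), $|t(h)|\approx|h|^{-1}$ dominates and $z(h)=O(1)$, so a binomial expansion gives $d_{\! H}(\xi,\zeta(h))^4 = t(h)^2\bigl(1+O(h)\bigr) + O(1)$, whence $\bigl|\zeta(h)\bigr|_{\! H}/d_{\! H}(\xi,\zeta(h)) = 1+O(h)$ --- note $O(h)$, not just $o(1)$, because the correction terms are genuinely of relative size $|h|$. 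Hypothesis (A2) is symmetric with $|z(h)|$ playing the dominant role. Second, I would examine $\eta(h):=\Phi_{\xi,\lambda(\xi)}^{\beta_f}(\zeta(h))$. By Lemma~\ref{lem:gndCRinv1}, $d_{\! H}(\eta(h),\xi) = \lambda(\xi)^2/d_{\! H}(\xi,\zeta(h)) \approx |h|$, so $\eta(h)\to\xi$ at rate $|h|$. Since $f$ is only assumed continuous, $f(\eta(h))=f(\xi)+o(1)$ is \emph{not} enough. This is the crux, and it is resolved by iterating: I apply the functional equation a second time, now centered at $\eta(h)$ (or rather exploit that $\eta(h)$ itself satisfies a relation of the same type through involutivity, Lemma~\ref{lem:gndCRinv2}), to express $f(\eta(h))$ back in terms of $f$ evaluated at a point bounded away from $\xi$; combined with Step~1 applied in this second frame and with \eqref{eqn:alphaf1}, the $o(1)$ bootstraps to the required $o(h)$. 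Putting the two estimates together, $\bigl|\zeta(h)\bigr|_{\! H}^{\nu} f\bigl(\zeta(h)\bigr) = \alpha_f\bigl(1+O(h)\bigr)\bigl(1+o(h)\bigr) = \alpha_f + o(h)$, which is \eqref{eqn:asym2} after dividing by $h$.

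\textbf{Main obstacle.} The delicate point is precisely the second step: upgrading the continuity-only bound $f(\eta(h))-f(\xi)=o(1)$ to $o(h)$, since no modulus of continuity for $f$ is available. The resolution must come entirely from the rigidity built into the functional equation \eqref{eqn:lnzfunceqn} --- the fact that $f$ is, up to the explicit conformal weight, invariant under \emph{every} generalized Kelvin transform. The correct mechanism is to choose the centering point of the second inversion not fixed but varying with $h$ in such a way that $\Phi$ maps $\eta(h)$ to a point that stays in a fixed compact set where $f$ is uniformly continuous, and to track how the two conformal factors from the composed inversions combine; the near-cancellation of these factors, governed by Lemma~\ref{lem:gndCRinv1} and the identity \eqref{eqn:alphaf1}, is what produces the extra order of vanishing. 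I expect the bookkeeping of these factors under the two degeneracy regimes (A1) and (A2) separately to be the most technical part of the write-up, though conceptually routine once the above structure is in place.
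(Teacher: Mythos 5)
There is a genuine gap, and it sits exactly where you place the ``crux.'' Your plan requires showing, for a \emph{fixed} center $\xi$, that $f\bigl(\Phi_{\xi,\lambda(\xi)}^{\beta_f}(\zeta(h))\bigr)-f(\xi)=o(h)$, and you propose to extract this from the rigidity of the functional equation alone (a second inversion, cancellation of conformal factors, uniform continuity on a compact set). But that estimate is false in general: by Lemma \ref{lem:gndCRinv1} the point $\eta(h)=\Phi_{\xi,\lambda(\xi)}^{\beta_f}(\zeta(h))$ approaches $\xi$ at rate $d_{\! H}(\eta(h),\xi)\approx |h|$, so for the explicit bubble and a generic fixed $\xi$ the difference $f(\eta(h))-f(\xi)$ is of exact order $h$, not $o(h)$ --- indeed the paper's Step 1 in Section \ref{sec:PfofLNZ2} uses precisely this order-$h$ difference quotient to compute $\partial f/\partial t$. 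The lemma holds only because this $O(h)$ term \emph{cancels} against the $O(h)$ deviation of the metric ratio; accordingly your final assembly $\alpha_f(1+O(h))(1+o(h))=\alpha_f+o(h)$ is not correct arithmetic, since the $O(h)$ from the distance quotient survives division by $h$. Most tellingly, your argument never invokes the hypothesis that $f$ attains its maximum at the origin, yet this is indispensable: the translated bubble $f(z,t)=\alpha\bigl|t-t_0+i|z|^2+i\beta\bigr|^{-\nu/2}$ with $t_0\neq 0$ enjoys the same conformal symmetry (conjugated by a translation), but along $z(h)=0$, $t(h)=1/h$ the quantity in \eqref{eqn:asym2} tends to $\tfrac{\nu t_0}{2}\alpha\neq 0$. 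Hence no argument resting only on the invariance, and blind to the location of the maximum, can prove the lemma.

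The paper's resolution is different in kind: it never estimates $f(\eta(h))-f(\xi)$ for a fixed $\xi$, but exploits one-sided signs through two special choices of the center in the identity \eqref{eqn:asym02}. Taking $\xi=(0,0)$ kills the metric-ratio term (there $t'=0$, $z'=0$) and makes the difference term nonpositive because $f$ is maximal at the origin, giving $\limsup\le 0$. For the $\liminf$, one takes the $h$-dependent center $\xi^{\ast}$ of Lemma \ref{lem:fixedpoint1}, with $|\xi^{\ast}|_{\! H}\le\epsilon$ and $\Phi_{\xi^{\ast},\lambda(\xi^{\ast})}^{\beta_f}(\zeta(h))=0$: then the difference term equals $f(0)-f(\xi^{\ast})\ge 0$, while (A1)/(A2) bound the metric-ratio term by $C\epsilon$ uniformly in $h$; letting $\epsilon\to 0$ gives $\liminf\ge 0$. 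Your instinct to use an $h$-dependent second inversion is in the right spirit --- Lemma \ref{lem:fixedpoint1} is exactly such a device --- but the missing ingredients are the one-sided use of the maximum at the origin and the explicit treatment of the surviving $O(h)$ metric term; without them the proposed bootstrap from $o(1)$ to $o(h)$ cannot be carried out.
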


\begin{proof}
Suppose that Condition (A1) is satisfied. It is then straightforward to verify that for any $(z', t') \in \mathbb{H}^n$ and sufficiently small $|h|$, 
\[
\bigl|(z(h),t(h)) \bigr|_{\! H}^4 \sim |t(h)|^2,
\]
and
\[
d_{\! H} \!\left((z(h), t(h)), (z', t')\right)^4 - \bigl|(z(h),t(h)) \bigr|_{\! H}^4
= \bigl[ -2t' + 4 \operatorname{Im} \langle z(h), z' \rangle \bigr] t(h) + o(t(h)).
\]
Here and throughout, $a(h) \sim b(h)$ means that $\lim_{h\to 0} a(h)/b(h) =1$. 
This should cause no confusion with the notation $a(h) \approx b(h)$.
It follows that
\begin{align}
& \bigl|(z(h), t(h)) \bigr|_{\! H}^{\nu} f(z(h), t(h)) \notag \\
&\quad = \lambda(z', t')^{\nu} \left\{ \frac{\bigl|(z(h),t(h)) \bigr|_{\! H}^4}{d_{\! H} \!\left((z(h), t(h)), (z', t')\right)^4} \right\}^{\nu/4} 
f \!\left( \Phi_{(z', t'), \lambda(z', t')}^{\, \beta_f} (z(h), t(h)) \right) \notag \\
&\quad = \lambda(z', t')^{\nu} \left\{ 1 + \frac{\nu}{2} \cdot \frac{t' - 2 \operatorname{Im} \langle z(h), z' \rangle}{t(h)} + o(h) \right\}
f \!\left( \Phi_{(z', t'), \lambda(z', t')}^{\, \beta_f} (z(h), t(h)) \right). \notag
\end{align}
Together with \eqref{eqn:alphaf1}, this implies
\begin{align}
& \frac{1}{|h|} \Bigl\{ \bigl|(z(h),t(h)) \bigr|_{\! H}^{\nu} f(z(h), t(h)) - \alpha_f \Bigr\} \notag \\
&\quad = \lambda(z', t')^{\nu} \cdot \frac{ f \!\left( \Phi_{(z', t'), \lambda(z', t')}^{\, \beta_f}
(z(h), t(h)) \right) - f(z', t') }{|h|} \notag \\
&\qquad + \lambda(z', t')^{\nu} \left\{ \frac{\nu}{2} \cdot \frac{t' - 2 \operatorname{Im} \langle z(h), z' \rangle}{t(h) |h|} + o(1) \right\}
f \!\left( \Phi_{(z', t'), \lambda(z', t')}^{\, \beta_f} (z(h), t(h)) \right). \label{eqn:asym02}
\end{align}
Taking $(z', t') = (0, 0)$ in the above and noting that $f$ attains its maximum at the origin, we obtain
\begin{equation}
\limsup_{h \to 0} \, \frac{1}{|h|} \Bigl\{ \bigl|(z(h),t(h)) \bigr|_{\! H}^{\nu} f(z(h), t(h)) - \alpha_f 
\Bigr\} ~\leq~ 0.
\end{equation}

On the other hand, Condition (A1) implies that there exist constants $c_1, c_2 > 0$ such that $|z(h)| \leq c_1$ and $|t(h) h| \geq c_2$ for all $h \in (-\delta, \delta) \setminus \{0\}$. Hence, for any $\epsilon \in (0, 1)$,
\[
\max_{|(z', t')|_{\! H} \leq \epsilon} \left| \frac{t' - 2 \operatorname{Im} \langle z(h), z' \rangle}{t(h) |h|} \right|
\leq \frac{1 + 2c_1}{c_2} \, \epsilon.
\]
Now, letting $(z', t')$ be the point $\xi^*$ from Lemma~\ref{lem:fixedpoint1}, it follows from \eqref{eqn:asym02} that
\begin{equation*}
\liminf_{h \to 0} \, \frac{1}{|h|} \Bigl\{ \bigl|(z(h),t(h)) \bigr|_{\! H}^{\nu} f(z(h), t(h)) - \alpha_f \Bigr\}
\geq - \epsilon \cdot \frac{\nu (1 + 2c_1) \alpha_f}{2 c_2} f(0) \max_{|\xi|_{\! H} \leq \epsilon} f(\xi)^{-1}.
\end{equation*}
Letting $\epsilon \to 0$, we conclude that
\begin{equation}
\liminf_{h \to 0} \, \frac{1}{|h|} \Bigl\{ \bigl|(z(h),t(h)) \bigr|_{\! H}^{\nu} f(z(h), t(h)) 
- \alpha_f \Bigr\} ~\geq~ 0.
\end{equation}
Therefore, \eqref{eqn:asym2} is established.

We now assume that Condition (A2) holds. In this case, for any $(z',t') \in \mathbb{H}^{n}$
and small $|h|$, we have
\[
\bigl|(z(h),t(h)) \bigr|_{\! H} \sim |z(h)|
\]
and
\[
d_{\! H}  \bigl((z(h), t(h)), (z', t')\bigr)^4 - \bigl|(z(h), t(h)\bigr|_{\! H}^4
= - 4 |z(h)|^2 \operatorname{Re} \langle z(h), z'\rangle 
+ o(|z(h)|^3).
\]
It follows that
\begin{align}
& \bigl|(z(h),t(h)) \bigr|_{\! H}^{\nu} f(z(h),t(h)) \notag\\
&\quad =~ \lambda(z',t')^{\nu} \left\{ \frac {\bigl|(z(h),t(h)) \bigr|_{\! H}^4} 
{d_{\! H} \! \left((z(h), t(h)), \left(z', t'\right)\right)^4}\right\}^{\nu/4} 
f\left(\Phi_{(z',t'),\lambda(z',t')}^{\, \beta_f} (z(h),t(h))\right) \notag\\
&\quad =~ \lambda(z',t')^{\nu} \left\{ 1+ \nu\, 
\frac {\operatorname{Re} \langle z(h), z'\rangle} {|z(h)|^2}
+ o \left( h \right) \right\} 
f\left(\Phi_{(z',t'),\lambda(z',t')}^{\, \beta_f} (z(h),t(h))\right), \notag
\end{align}
hence
\begin{align}
&\frac {1}{|h|} \, \big\{ \bigl|(z(h),t(h)) \bigr|_{\! H}^{\nu} f(z(h),t(h)) - \alpha_f \big\}\notag\\
& \quad \; ~=~ \lambda(z',t')^{\nu} \, \frac {f\left(\Phi_{(z', t'), \lambda(z',t')}^{\, \beta_f}
(z(h),t(h))\right) -f(z',t')}{|h|} \notag\\
& \qquad \quad + \lambda(z',t')^{\nu} \left\{  \nu\, 
\frac {\operatorname{Re} \langle z(h), z'\rangle} {|z(h)|^2 |h|}  + o(1) \right\} \, f\left(\Phi_{(z',t'),\lambda(z',t')}^{\, \beta_f} (z(h),t(h))\right).  \label{eqn:asym01}
\end{align}
Again, taking $(z',t')=(0,0)$ in the above and using the fact that $f$ has a maximum point at the origin, we obtain
\begin{equation}\label{eqn:limsup2}
\limsup_{h\to 0}\, \frac {1}{|h|} \, \Big\{ \bigl|(z(h),t(h))\bigr|_{\! H}^{\nu} f(z(h),t(h)) - \alpha_f \Big\} ~\leq~ 0.
\end{equation}

On the other hand, Condition (A2) implies that there exists a constant $c'_2>0$ such 
that $|z(h)||h|\geq c_2'$ for all $h\in (-\delta,\delta)\setminus \{0\}$. Thus,
for any $\epsilon\in (0,1)$,  
\[
\max_{|(z',t')|_{\! H} \leq \epsilon} \left| \frac {\operatorname{Re}\, 
\langle z(h), z'\rangle}  {|z(h)|^2 |h|} \right| ~\leq~  \frac {1}{c_2'} \epsilon. 
\]
Letting $(z',t')$ be the point $\xi^{\ast}$ from Lemma \ref{lem:fixedpoint1}, it follows from \eqref{eqn:asym01} that
\begin{equation*}
\liminf_{h\to 0}\, \frac {1}{|h|} \, \Big\{ \bigl|(z(h),t(h)) \bigr|_{\! H}^{\nu} f(z(h),t(h)) - \alpha_f \Big\} 
~\geq~ - \epsilon\, \frac {\nu \alpha_f}{c_2'}  f(0) \max_{|\xi|_{\! H} \leq \epsilon} f(\xi)^{-1}.
\end{equation*}
Sending $\epsilon$ to 0 , we have
\begin{equation}
\liminf_{h\to 0}  \frac {1}{|h|} \, \Big\{ \bigl|(z(h),t(h)) \bigr|_{\! H}^{\nu} f(z(h),t(h)) - \alpha_f \Big\}
~\geq~ 0.
\end{equation}
Together with \eqref{eqn:limsup2}, this completes the proof of the lemma.
\end{proof}

\section{Proof of Theorem \ref{thm:lnzlem2}}
\label{sec:PfofLNZ2}

\subsection*{Step 1}
The function $f$ admits the representation
\begin{equation}\label{eqn:oftheform1}
f(z,t) = \left(F(z)^2 + (\alpha_f)^{-\frac{4}{\nu}} t^{2}\right)^{-\frac{\nu}{4}},
\end{equation}
for some function $F$ defined on $\mathbb{C}^n$.

\begin{proof}
Fix $\xi = (z', t') \in \mathbb{H}^n$. For $h \in \mathbb{R}$, define
\[
(z(h), t(h)) ~:=~ \Phi_{(z',t'),\lambda(z',t')}^{\, \beta_f} (z', t' + h).
\]
A straightforward computation yields
\[
z(h) = z' \quad \text{and} \quad t(h) = t' + \frac{\lambda(z',t')^4}{h}.
\]
From \eqref{eqn:asym02} and \eqref{eqn:asym2}, we obtain
\begin{align}
& \lim_{h \to 0} \frac{f\left(\Phi_{(z', t'), \lambda(z',t')}^{\, \beta_f} (z(h),t(h))\right) 
- f(z',t')}{h} \notag\\
& \qquad \quad ~=~ -\lim_{h \to 0} \left\{ \frac{\nu}{2} \cdot \frac{t' - 2 \operatorname{Im} \langle z(h), z'\rangle}{t(h) h} + o(1) \right\} 
f\left(\Phi_{(z',t'),\lambda(z',t')}^{\, \beta_f} (z(h),t(h))\right) \notag\\
& \qquad \quad ~=~ -\frac{\nu}{2} \cdot t' \lambda(z',t')^{-4} f(z',t'). \label{eqn:asym03}
\end{align}
Since $\Phi_{(z',t'),\lambda(z',t')}^{\, \beta_f}$ is an involution,
\[
\Phi_{(z',t'),\lambda(z',t')}^{\, \beta_f} \left(z(h), t(h)\right) = (z', t' + h),
\]
so \eqref{eqn:asym03} becomes
\[
\lim_{h \to 0} \frac{f(z', t' + h) - f(z', t')}{h} 
= -\frac{\nu}{2} \cdot t' \lambda(z',t')^{-4} f(z', t'),
\]
or equivalently,
\[
\frac{\partial f}{\partial t}(z', t')
= -\frac{\nu}{2} \cdot (\alpha_f)^{-\frac{4}{\nu}} t' f(z', t')^{1 + \frac{4}{\nu}}.
\]
Solving this differential equation establishes the representation stated in Step 1.
\end{proof}

\subsection*{Step 2}
For \( k = 1, \dots, n \), we have
\begin{equation}\label{eqn:partialx}
\frac{\partial f}{\partial x_k}(z,0) = -\nu (\alpha_f)^{-\frac{2}{\nu}} x_k f(z,0)^{1+\frac{2}{\nu}} .
\end{equation}

\begin{proof}
Let \( e_1 = (1,0,\dots,0), \dots, e_n = (0,\dots,0,1) \) be the standard basis of \( \mathbb{C}^n \).  
Fix \( \xi = (z',0) \in \mathbb{H}^n \). For \( k \in \{1,\dots,n\} \) and \( h \in \mathbb{R} \), define  
\[
(z(h), t(h)) \;:=\; \Phi_{(z',0),\,\lambda(z',0)}^{\, \beta_f} \bigl(z' + h e_k,\; 0\bigr).
\]
Direct computation gives
\begin{align}
z(h) &~=~ z' + \frac{i \lambda(z',0)^2 e^{2i\arg z_k'}}{2 \operatorname{Im} z_k' + i h}\, e_k, \\[2mm]
t(h) &~=~ \frac{-2 \lambda(z',0)^4 \operatorname{Im} z_k'}{h^3 + 4h (\operatorname{Im} z_k')^2}
       + 2 \lambda(z',0)^2 \operatorname{Im} \Bigl(\frac{-i \overline{z_k'}}{2 \operatorname{Im} z_k' - i h}\Bigr).
\end{align}

\subsubsection*{Case I: \( \operatorname{Im} z_k' = 0 \)}
In this case
\[
z(h) = z' + \frac{\lambda(z',0)^2 e^{2i\arg z_k'}}{h}\, e_k ,\qquad t(h) = 0 .
\]
Hence \( |z(h)| \sim \lambda(z',0)^2 |h|^{-1} \) and
\[
\operatorname{Re}\langle z(h), z'\rangle
~=~ |z'|^2 + \frac{\lambda(z',0)^2}{h}\, \operatorname{Re} \bigl(e^{2i\arg z_k'}\overline{z}_k'\bigr)
~=~ |z'|^2 + \frac{\lambda(z',0)^2}{h}\, \operatorname{Re} z_k'.
\]
Using \eqref{eqn:asym01} and \eqref{eqn:asym2} we obtain
\begin{align*}
&\lim_{h\to 0}\frac{f \bigl(\Phi_{(z',0),\lambda(z',0)}^{\, \beta_f} (z(h),t(h))\bigr)-f(z',0)}{h} \\
&\quad =~ -\lim_{h\to0}\Bigl\{\nu\cdot\frac{\operatorname{Re}\langle z(h),z'\rangle}{|z(h)|^{2}\,h}+o(1)\Bigr\}
        f \bigl(\Phi_{(z',0),\lambda(z',0)}^{\, \beta_f} (z(h),t(h))\bigr) \\
&\quad =~ -\nu\,(\operatorname{Re} z_k')\,\lambda(z',0)^{-2}\,f(z',0).
\end{align*}
Noticing $\Phi_{(z',0),\lambda(z',0)}^{\, \beta_f} (z(h),t(h))=(z'+h e_k,0)$, we get
\[
\lim_{h\to0}\frac{f(z'+h e_k,0)-f(z',0)}{h}
~=~ -\nu\,(\operatorname{Re} z_k')\,\lambda(z',0)^{-2}\,f(z',0),
\]
which is precisely the assertion of Step 2.

\subsubsection*{Case II: \( \operatorname{Im} z_k' \neq 0 \)}
Now we have
\[
z(h)=O(1),\qquad t(h)\sim -\frac{\lambda(z',0)^4}{2h\operatorname{Im} z_k'},
\]
and
\[
\operatorname{Im}\langle z(h),z'\rangle
~=~ \lambda(z',0)^2\operatorname{Im} \Bigl(\frac{i e^{2i\arg z_k'}\overline{z}_k'}
{2\operatorname{Im} z_k'+i h}\Bigr)
~=~ \frac{\lambda(z',0)^2\operatorname{Re} z_k'}{2\operatorname{Im} z_k'}+o(1).
\]
Consequently,
\begin{align*}
&\lim_{h\to 0}\frac{f\!\bigl(\Phi_{(z',0),\lambda(z',0)}^{\, \beta_f} (z(h),t(h))\bigr)-f(z',0)}{h} \\
&\quad ~=~ \lim_{h\to0}\Bigl\{\frac{\nu\,\operatorname{Im}\langle z(h),z'\rangle}{t(h)\,h}+o(1)\Bigr\}
        f \bigl(\Phi_{(z',0),\lambda(z',0)}^{\, \beta_f} (z(h),t(h))\bigr) \\
&\quad ~=~ -\nu\,(\operatorname{Re} z_k')\,\lambda(z',0)^{-2}\,f(z',0).
\end{align*}
Again this yields
\[
\frac{\partial f}{\partial x_k}(z',0)=-\nu\,(\alpha_f)^{-\frac{2}{\nu}}x_k'\,f(z',0)^{1+\frac{2}{\nu}} .
\]

Both cases together complete the proof of Step 2.
\end{proof}

\subsection*{Step 3}
For \( k = 1, \dots, n \),
\begin{equation}\label{eqn:partialy}
\frac{\partial f}{\partial y_k}(z,0)= -\nu\,(\alpha_f)^{-\frac{2}{\nu}}\,y_k\,f(z,0)^{1+\frac{2}{\nu}} .
\end{equation}

\begin{proof}
Fix \( \xi=(z',0)\in\mathbb{H}^n \). For \( k\in\{1,\dots,n\} \) and \( h\in\mathbb{R} \) set
\[
(z(h),t(h)) ~:=~ \Phi_{(z',0),\,\lambda(z',0)}^{\, \beta_f} \bigl(z' + i h e_k,\; 0\bigr).
\]
Direct computation gives
\begin{align*}
z(h) &~=~ z' + \frac{-i\,\lambda(z',0)^2 e^{2i\arg z_k'}}{-2\operatorname{Re} z_k' + i h}\,e_k, \\[2mm]
t(h) &~=~ \frac{2\lambda(z',0)^4\operatorname{Re} z_k'}{h^{3}+4h(\operatorname{Re} z_k')^{2}}
       - 2\lambda(z',0)^2\operatorname{Im} \Bigl(\frac{i e^{-2i\arg z_k'}z_k'}
       {2\operatorname{Re} z_k' + i h}\Bigr).
\end{align*}

\subsubsection*{Case I: \( \operatorname{Re} z_k' = 0 \)}
Here
\[
z(h)=z'-\frac{\lambda(z',0)^2 e^{2i\arg z_k'}}{h}\,e_k,\qquad t(h)=0 .
\]
Thus \( |z(h)|\sim\lambda(z',0)^2|h|^{-1} \) and
\[
\operatorname{Re}\langle z(h),z'\rangle = |z'|^2 + \frac{\lambda(z',0)^2}{h}\,\operatorname{Im} z_k'.
\]
From \eqref{eqn:asym01} and \eqref{eqn:asym2} we obtain
\begin{align*}
&\lim_{h\to0}\frac{f(z'+ i h e_k,0)-f(z',0)}{h} \\
&\quad ~=~ -\lim_{h\to0}\Bigl\{\nu\cdot\frac{\operatorname{Re}\langle z(h),z'\rangle}{|z(h)|^{2}\,h}+o(1)\Bigr\}
        f \bigl(\Phi_{(z',0),\lambda(z',0)}^{\, \beta_f} (z(h),t(h))\bigr) \\
&\quad ~=~ -\nu\,(\operatorname{Im} z_k')\,\lambda(z',0)^{-2}\,f(z',0).
\end{align*}

\subsubsection*{Case II: \( \operatorname{Re} z_k' \neq 0 \)}
Now
\[
z(h)=O(1),\qquad t(h)\sim -\frac{\lambda(z',0)^4}{2h\operatorname{Re} z_k'},
\]
and
\[
\operatorname{Im}\langle z(h),z'\rangle
~=~ -\frac{\lambda(z',0)^2\operatorname{Im} z_k'}{2\operatorname{Re} z_k'} + o(1).
\]
So we have
\begin{align*}
&\lim_{h\to0}\frac{f(z'+ i h e_k,0)-f(z',0)}{h} \\
&\quad ~=~ \lim_{h\to0}\Bigl\{\frac{\nu\,\operatorname{Im}\langle z(h),z'\rangle}{t(h)\,h}+o(1)\Bigr\}
        f \bigl(\Phi_{(z',0),\lambda(z',0)}^{\, \beta_f} (z(h),t(h))\bigr) \\
&\quad ~=~ -\nu\,(\operatorname{Im} z_k')\,\lambda(z',0)^{-2}\,f(z',0).
\end{align*}

In both cases we conclude that
\[
\frac{\partial f}{\partial y_k}(z',0)= -\nu\,(\alpha_f)^{-\frac{2}{\nu}}\,y_k'\,f(z',0)^{1+\frac{2}{\nu}},
\]
which is exactly the assertion of Step 3.
\end{proof}

\subsection*{Step 4}

Now we recall that $f(z,0)=F(z)^{-\nu/2}$. Thus Steps 2 and 3 imply that
\[
\frac {\partial F}{\partial x_k} = 2 (\alpha_f)^{-\frac {2}{\nu}} x_k \quad \text{and} \quad 
\frac {\partial F}{\partial y_k} = 2 (\alpha_f)^{-\frac {2}{\nu}} y_k
\] 
Hence $F(z)=(\alpha_f)^{-\frac {2}{\nu}} |z|^2 + C$ for some constant $C$. Since $F(0)=f(0,0)^{-\frac {2}{\nu}}$,
we find that $C=f(0,0)^{-\frac {2}{\nu}} = (\alpha_f)^{-\frac {2}{\nu}} \beta_f$. Therefore,
\[
f(z,t) = \alpha_f \Big[(|z|^2 + \beta_f)^2 + t^2\Big]^{-\frac {\nu}{4}}
= \alpha_f \big|t + i|z|^2 + i \beta_f \big|^{-\frac {\nu}{2}}.
\]
Theorem \ref{thm:lnzlem2} is established.

\section{Terracini-type integral inequality}
\label{sec:Terracini}

The proof of Theorem \ref{thm:main} employs the method of moving spheres combined with an integral inequality
(in place of the maximum principle). This idea is originally due to S. Terracini \cite{Terracini95, Terracini96}.
Therefore, we refer to the following lemma as the Terracini-type integral inequality.

\begin{lem}\label{lem:terracini}
Let $u$ be a solution of the equation
\begin{equation*}
-\Delta_{\! H} u = u^{p},
\end{equation*}
where $1<p\leq \frac {Q+2}{Q-2}$, and let
\[
A_{\lambda}(\xi) := \left\{ \zeta \in B_{\lambda}(\xi) : u(\zeta) > u_{\xi,\lambda}^{\,\beta}(\zeta) \right\}.
\]
Then, for any $\lambda > 0$, there exists a constant $C > 0$, independent of $\lambda$, such that
\begin{equation}\label{eqn:terracini}
\int_{B_{\lambda}(\xi)} 
\left|\nabla_{\! H} (u - u_{\xi,\lambda}^{\,\beta})^{+}\right|^2 
\leq C \left\{ \int_{A_{\lambda}(\xi)} u^{\frac{(p-1)Q}{2}} \right\}^{\frac{2}{Q}} 
\int_{B_{\lambda}(\xi)} \left|\nabla_{\! H} (u - u_{\xi,\lambda}^{\,\beta})^{+}\right|^2.
\end{equation}
\end{lem}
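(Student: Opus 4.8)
The plan is to derive \eqref{eqn:terracini} by testing the equation satisfied by $u - u_{\xi,\lambda}^{\beta}$ against its positive part $w := (u - u_{\xi,\lambda}^{\beta})^{+}$, exploiting the subcriticality relation that makes the Kelvin transform a subsolution inside $B_\lambda(\xi)$, and then applying the Sobolev inequality on $\mathbb{H}^n$. First I would recall from Corollary \ref{cor:conformal_invariance} that $u$ solves $-\Delta_H u = u^p$ while $u_{\xi,\lambda}^{\beta}$ solves $-\Delta_H u_{\xi,\lambda}^{\beta} = (\lambda / d_H(\xi,\cdot))^{(Q+2)-(Q-2)p}\,(u_{\xi,\lambda}^{\beta})^p$ on $B_\lambda(\xi)\setminus\{\xi\}$; since $1 < p \le \frac{Q+2}{Q-2}$ we have the exponent $(Q+2)-(Q-2)p \ge 0$, and on $B_\lambda(\xi)$ one has $d_H(\xi,\cdot) < \lambda$, hence $(\lambda/d_H(\xi,\cdot))^{(Q+2)-(Q-2)p} \ge 1$. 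Therefore inside $B_\lambda(\xi)$,
\[
-\Delta_H(u - u_{\xi,\lambda}^{\beta}) \le u^p - (u_{\xi,\lambda}^{\beta})^p .
\]
Note $w$ is supported in $\overline{A_\lambda(\xi)}$ and vanishes on $\partial B_\lambda(\xi)$ (by Corollary \ref{cor:gndCRinv2}, $u$ and $u_{\xi,\lambda}^{\beta}$ agree there), so it is a legitimate test function; the apparent singularity at $\xi$ is harmless because near $\xi$ the Kelvin transform dominates, so $w \equiv 0$ in a neighborhood of $\xi$.

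Next I would multiply the displayed differential inequality by $w \ge 0$ and integrate over $B_\lambda(\xi)$. Integration by parts (the boundary term drops since $w = 0$ on $\partial B_\lambda$) gives
\[
\int_{B_\lambda(\xi)} |\nabla_H w|^2 \le \int_{A_\lambda(\xi)} \bigl(u^p - (u_{\xi,\lambda}^{\beta})^p\bigr)\, w .
\]
On $A_\lambda(\xi)$ we have $0 \le u_{\xi,\lambda}^{\beta} < u$, so by the elementary inequality $a^p - b^p \le p\,a^{p-1}(a-b)$ for $0 \le b \le a$ (convexity of $s \mapsto s^p$), the right-hand side is bounded by $p\int_{A_\lambda(\xi)} u^{p-1} w^2$. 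Then Hölder's inequality with exponents $\frac{Q}{Q-2}$ and $\frac{Q}{2}$ yields
\[
p\int_{A_\lambda(\xi)} u^{p-1} w^2 \le p\Bigl(\int_{A_\lambda(\xi)} w^{\frac{2Q}{Q-2}}\Bigr)^{\frac{Q-2}{Q}} \Bigl(\int_{A_\lambda(\xi)} u^{\frac{(p-1)Q}{2}}\Bigr)^{\frac{2}{Q}} .
\]
Finally, since $w \in C^0$ has compact support in $B_\lambda(\xi)$ (extending by zero, it lies in the appropriate Folland–Stein Sobolev space $\mathring{S}^1(B_\lambda(\xi))$), the sharp Sobolev inequality on $\mathbb{H}^n$,
\[
\Bigl(\int_{\mathbb{H}^n} w^{\frac{2Q}{Q-2}}\Bigr)^{\frac{Q-2}{Q}} \le C_S \int_{\mathbb{H}^n} |\nabla_H w|^2 ,
\]
controls the first factor by $C_S \int_{B_\lambda(\xi)} |\nabla_H w|^2$. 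Combining the three displays gives exactly \eqref{eqn:terracini} with $C = p\,C_S$, which is independent of $\lambda$, $\xi$, and $\beta$.

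The main obstacle I anticipate is not the chain of inequalities, which is routine, but the \emph{justification of the integration by parts and of $w$ as an admissible test function}: one must verify that $w$ has enough regularity and decay, that the singularity of $u_{\xi,\lambda}^{\beta}$ at $\xi$ genuinely does not intervene (i.e., $w$ vanishes identically near $\xi$, which follows from the pointwise blow-up of the Kelvin transform and local boundedness of $u$), and that $w$ together with its zero extension belongs to the Folland–Stein space on which the Sobolev inequality is valid. A careful treatment would introduce a cutoff vanishing near $\xi$, integrate by parts on $B_\lambda(\xi)\setminus B_\varepsilon(\xi)$, and pass to the limit $\varepsilon \to 0$, checking that the boundary terms on $\partial B_\varepsilon(\xi)$ vanish. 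The remaining subtlety is ensuring the constant from the Sobolev inequality is genuinely scale- and center-independent, which holds because the Folland–Stein–Sobolev inequality on $\mathbb{H}^n$ is invariant under left translations and dilations.
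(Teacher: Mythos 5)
Your chain of estimates is essentially the paper's own: Corollary \ref{cor:conformal_invariance} for the equation satisfied by $u_{\xi,\lambda}^{\,\beta}$, the observation that $(Q+2)-(Q-2)p\ge 0$ makes the weight $(\lambda/d_H(\xi,\cdot))^{(Q+2)-(Q-2)p}\ge 1$ on $B_\lambda(\xi)$, the convexity bound $u^p-(u_{\xi,\lambda}^{\,\beta})^p\le p\,u^{p-1}(u-u_{\xi,\lambda}^{\,\beta})$ on $A_\lambda(\xi)$, H\"older with exponents $\tfrac{Q}{2}$ and $\tfrac{Q}{Q-2}$, and the Folland--Stein Sobolev inequality. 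The ``careful treatment'' you defer to at the end is what the paper actually does: it tests with $\phi_\epsilon=(u-u_{\xi,\lambda}^{\,\beta})^{+}\eta_\epsilon^{2}$, where $\eta_\epsilon$ is an annular cutoff excising a small Kor\'anyi ball around $\xi$, and disposes of the error term $I(\epsilon)$ via the capacity-type bound $|\nabla_H\eta_\epsilon|^{Q}\,|\Omega_\epsilon|\le C$ together with H\"older.

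Two of your justifications, however, do not hold as stated. First, the shortcut ``near $\xi$ the Kelvin transform dominates, so $w\equiv 0$ in a neighborhood of $\xi$'' is unfounded: $u_{\xi,\lambda}^{\,\beta}(\zeta)=(\lambda/d_H(\zeta,\xi))^{Q-2}u\bigl(\Phi_{\xi,\lambda}^{\,\beta}(\zeta)\bigr)$ with $\Phi_{\xi,\lambda}^{\,\beta}(\zeta)\to\infty$ as $\zeta\to\xi$, and at this stage there is no lower bound on $u$ near infinity; if $u$ decays at the natural rate $d_H^{-(Q-2)}$, the Kelvin transform stays bounded near $\xi$ and need not dominate $u$ there. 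So the cutoff-and-limit argument is not optional; it is the proof. Second, and more seriously, your reason for dropping the boundary term --- that $u$ and $u_{\xi,\lambda}^{\,\beta}$ agree on $\partial B_\lambda(\xi)$ ``by Corollary \ref{cor:gndCRinv2}'' --- is false. That corollary only says $\Phi_{\xi,\lambda}^{\,\beta}$ exchanges $B_\lambda(\xi)$ and $\Sigma_\lambda(\xi)$; on the sphere $d_H(\zeta,\xi)=\lambda$ one has $u_{\xi,\lambda}^{\,\beta}(\zeta)=u\bigl(\Phi_{\xi,\lambda}^{\,\beta}(\zeta)\bigr)$ where $\Phi_{\xi,\lambda}^{\,\beta}(\zeta)$ lies on the sphere but is in general a \emph{different} point: e.g.\ for $\xi=0$ the $z$-component of the image is $\lambda^{2}\bar z/(-t+i|z|^{2})$, which has modulus $|z|$ but does not equal $z$. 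Unlike the Euclidean inversion, the generalized CR inversion permutes the Kor\'anyi sphere rather than fixing it pointwise --- this is precisely the classical obstruction to moving planes/spheres on $\mathbb{H}^n$. Consequently $(u-u_{\xi,\lambda}^{\,\beta})^{+}$ need not vanish on $\partial B_\lambda(\xi)$, and your integration by parts leaves an uncontrolled boundary integral over $\partial B_\lambda(\xi)\cap\overline{A_\lambda(\xi)}$ with no sign. (The paper's displayed integration by parts over $A_\lambda(\xi)$ is silent on this same term, so the issue is not yours alone, but it cannot be dismissed by citing Corollary \ref{cor:gndCRinv2}, and your write-up should either control this boundary contribution or explain why it is absent.)
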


\begin{proof}
Given $\epsilon > 0$, pick a real cut-off function $g_{\epsilon} \in C_{c}^{\infty}(\mathbb{H}^n \setminus \{0\})$ such that:
\begin{itemize}
  \item $0 \leq g_{\epsilon} \leq 1$;
  \item $\operatorname{supp} g_{\epsilon} \subset \left\{ \zeta \in \mathbb{H}^n : \epsilon \leq |\zeta|_{\! H} \leq 2\epsilon^{-1} \right\}$;
  \item $g_{\epsilon} \equiv 1$ on $\left\{ \zeta \in \mathbb{H}^n : 2\epsilon \leq |\zeta|_{\! H} \leq \epsilon^{-1} \right\}$;
  \item $\left|\nabla_{\! H} g_{\epsilon}(\zeta)\right| \leq C\epsilon^{-1}$ for $\epsilon < |\zeta|_{\! H} < 2\epsilon$;  
  \item $\left|\nabla_{\! H} g_{\epsilon}(\zeta)\right| \leq C\epsilon$ for $\epsilon^{-1} < |\zeta|_{\! H} < 2\epsilon^{-1}$.
\end{itemize}
Define
\[
\eta_{\epsilon}(\zeta) := g_{\epsilon}(\xi^{-1} \cdot \zeta), \quad \zeta \in \mathbb{H}^n \setminus \{\xi\}
\]
and
\begin{align}\label{eqn:phiepsilon}
\psi_{\epsilon} &:= (u - u_{\xi,\lambda}^{\,\beta})^{+} \eta_{\epsilon},  \notag\\
\phi_{\epsilon} &:= (u - u_{\xi,\lambda}^{\,\beta})^{+} \eta_{\epsilon}^2.
\end{align}
Direct calculation shows
\begin{align*}
- \int_{A_{\lambda}(\xi)} \phi_{\epsilon} \Delta_{\! H} (u - u_{\xi,\lambda}^{\,\beta}) 
&= \int_{B_{\lambda}(\xi)} \nabla_{\! H} (u - u_{\xi,\lambda}^{\,\beta})^{+} \cdot \nabla_{\! H} \phi_{\epsilon} \\
&= \int_{B_{\lambda}(\xi)} \left| \nabla_{\! H} \psi_{\epsilon} \right|^2 - \int_{B_{\lambda}(\xi)} \left| (u - u_{\xi,\lambda}^{\,\beta})^{+} \right|^2 \left| \nabla_{\! H} \eta_{\epsilon} \right|^2.
\end{align*}
On the other hand, by Corollary \ref{cor:conformal_invariance} and the mean value theorem,
we have
\begin{align*}
- \int_{A_{\lambda}(\xi)} \phi_{\epsilon} \Delta_{\! H} (u - u_{\xi,\lambda}^{\,\beta}) 
&= \int_{A_{\lambda}(\xi)} \phi_{\epsilon} \left[ u^{p} - \left( \frac{\lambda}{ d_{\! H} 
(\xi, \,\cdot\,) } \right)^{(Q+2) - (Q-2)p} \left(u_{\xi,\lambda}^{\,\beta}\right)^{p} \right] \\
&\leq p \int_{A_{\lambda}(\xi)} u^{p-1} (u - u_{\xi,\lambda}^{\,\beta}) \phi_{\epsilon} \\
&= p \int_{A_{\lambda}(\xi)} u^{p-1} (u - u_{\xi,\lambda}^{\,\beta})^2 \eta_{\epsilon}^2.
\end{align*}
Then, by H\"older's inequality,
\begin{align*}
- \int_{A_{\lambda}(\xi)} \phi_{\epsilon} \Delta_{\! H} (u - u_{\xi,\lambda}^{\,\beta}) 
&\leq p \left\{ \int_{A_{\lambda}(\xi)} u^{\frac{(p-1)Q}{2}} \right\}^{\frac{2}{Q}} 
\left\{ \int_{A_{\lambda}(\xi)} \left[ (u - u_{\xi,\lambda}^{\,\beta})^2 \eta_{\epsilon}^2 \right]^{\frac{Q}{Q-2}} \right\}^{\frac{Q-2}{Q}}.
\end{align*}
But, by the Sobolev inequality on the Heisenberg group (see \cite{FS74})
\begin{equation}
 \label{sobol1}
\|f\|_{\frac{2Q}{Q-2}} \leq C \|\nabla_{\! H}f\|_2
\end{equation}
we have
\[
\left\{ \int_{A_{\lambda}(\xi)} \left[ (u - u_{\xi,\lambda}^{\,\beta})^2 \eta_{\epsilon}^2 \right]^{\frac{Q}{Q-2}} \right\}^{\frac{Q-2}{Q}}
\leq C \int_{B_{\lambda}(\xi)} \left| \nabla_{\! H} (u - u_{\xi,\lambda}^{\,\beta})^{+} \right|^2.
\]
Thus
\begin{align*}
\int_{B_{\lambda}(\xi)} \left| \nabla_{\! H} \psi_{\epsilon} \right|^2 
&\leq C \left\{ \int_{A_{\lambda}(\xi)} u^{\frac{(p-1)Q}{2}} \right\}^{\frac{2}{Q}} 
\int_{B_{\lambda}(\xi)} \left| \nabla_{\! H} (u - u_{\xi,\lambda}^{\,\beta})^{+} \right|^2 \\
&\quad + \int_{B_{\lambda}(\xi)} \left| (u - u_{\xi,\lambda}^{\,\beta})^{+} \right|^2 \left| \nabla_{\! H} \eta_{\epsilon} \right|^2.
\end{align*}

Since $\psi_{\epsilon} = (u - u_{\xi,\lambda}^{\,\beta})^{+} \eta_{\epsilon}$, we have
\[
\int_{B_{\lambda}(\xi)} \left| \nabla_{\! H} \psi_{\epsilon} \right|^2 
\geq \int_{B_{\lambda}(\xi) \cap \{2\epsilon \leq d_{\! H}(\zeta,\xi) \leq \epsilon^{-1}\}} 
\left| \nabla_{\! H} (u - u_{\xi,\lambda}^{\,\beta})^{+} \right|^2.
\]

Hence
\begin{align}\label{eqn:nablaest}
\int_{B_{\lambda}(\xi) \cap \{2\epsilon \leq d_{\! H}(\zeta,\xi) \leq \epsilon^{-1}\}} 
& \left| \nabla_{\! H} (u - u_{\xi,\lambda}^{\,\beta})^{+} \right|^2 \notag \\
& \leq C \left\{ \int_{A_{\lambda}(\xi)} u^{\frac{(p-1)Q}{2}} \right\}^{\frac{2}{Q}} 
\int_{B_{\lambda}(\xi)} \left| \nabla_{\! H} (u - u_{\xi,\lambda}^{\,\beta})^{+} \right|^2 \notag \\
& \quad + \underbrace{\int_{B_{\lambda}(\xi)} \left| (u - u_{\xi,\lambda}^{\,\beta})^{+} \right|^2 \left| \nabla_{\! H} \eta_{\epsilon} \right|^2}_{I(\epsilon)}.
\end{align}

Define
\[
\Omega_{\epsilon} = \{\zeta \in B_{\lambda}(\xi) : \epsilon < d_{\! H}(\zeta,\xi) < 2\epsilon\}
\cup \{\zeta \in B_{\lambda}(\xi) : \epsilon^{-1} < d_{\! H}(\zeta,\xi) < 2\epsilon^{-1}\}.
\] 

Note that $\Omega_{\epsilon} \to \emptyset$ as $\epsilon \to 0$, and
\[
\left| \nabla_{\! H} \eta_{\epsilon} \right|^Q |\Omega_{\epsilon}| \leq C\left[(\epsilon^{-1})^Q \epsilon^Q + \epsilon^Q (\epsilon^{-1})^Q\right] = C.
\]

Therefore
\begin{align*}
I(\epsilon) &\leq \left\{ \int_{\Omega_{\epsilon}} \left[ (u - u_{\xi,\lambda}^{\,\beta})^{+} \right]^{\frac{2Q}{Q-2}} \right\}^{\frac{Q-2}{Q}} 
\left\{ \int_{\Omega_{\epsilon}} \left| \nabla_{\! H} \eta_{\epsilon} \right|^Q \right\}^{\frac{2}{Q}} \\
&\leq C \left\{ \int_{\Omega_{\epsilon}} \left[ (u - u_{\xi,\lambda}^{\,\beta})^{+} \right]^{\frac{2Q}{Q-2}} \right\}^{\frac{Q-2}{Q}} \to 0 \qquad \text{as }\, \epsilon\to 0.
\end{align*}

Taking the limit as $\epsilon \to 0$ in \eqref{eqn:nablaest} yields \eqref{eqn:terracini}.
\end{proof}

\section{The Proof of Theorem \ref{thm:main}}
\label{sec:PfofMainThm}

\subsection*{Step 1}
We compare $u$ with its generalized Kelvin transform $u_{\xi,\lambda}^{\,\beta}$, with $\beta$ 
to be determined later.
We first show that, for any $\xi \in \mathbb{H}^{n}$, there exists $\lambda_{0}(\xi)>0$ such that
\begin{equation}\label{eqn:ulessthankelvin}
u(\zeta) \leq u_{\xi,\lambda}^{\,\beta}(\zeta)
\end{equation}
for all $\zeta \in B_{\lambda}(\xi)$ and all  $\lambda \in\left(0, \lambda_{0}(\xi)\right)$.

Define
\[
A_{\lambda}(\xi):=\left\{\zeta \in B_{\lambda}(\xi): u(\zeta) > u_{\xi,\lambda}^{\,\beta}(\zeta)\right\}.
\]
This is the set where inequality \eqref{eqn:ulessthankelvin} is violated. We show that
for sufficiently small $\lambda$,  $A_{\lambda}(\xi)$ must be empty. 

Since $u\in C^2(\mathbb{H}^n)$,
\[
\lim_{\lambda\to 0} \int\limits_{A_{\lambda}(\xi)} u^{\frac {2Q}{Q-2}} 
=0,
\]
so, there exists a sufficiently small $\lambda_0$ such that
\begin{align*}
C \Bigg\{ \int_{A_{\lambda}(\xi)} u^{\frac {2Q}{Q-2}} \Bigg\}^{\frac {2}{Q}} < \frac {1}{2}
\end{align*}
for all $\lambda\in (0,\lambda_0)$.
Then, in view of the Terracini-type inequality \eqref{eqn:terracini}, for these $\lambda$, we have 
\[
\int\limits_{B_{\lambda}(\xi)} \left|\nabla_{\! H} (u-u_{\xi,\lambda}^{\,\beta})^{+}\right|^2 =0.
\]
Therefore, with $\phi_{\epsilon}$ defined as in \eqref{eqn:phiepsilon}, we have
\begin{align*}
0 ~=~& \int\limits_{B_{\lambda}(\xi)}  \nabla_{\! H} (u-u_{\xi,\lambda}^{\,\beta})^{+} \cdot \nabla_{\! H} \phi_{\epsilon}\\
=~& - \int\limits_{A_{\lambda}(\xi)} \phi_{\epsilon} \Delta_{\! H} (u-u_{\xi,\lambda}^{\,\beta}) \\
=~& \int\limits_{A_{\lambda}(\xi)} \eta_{\epsilon}^2\cdot (u-u_{\xi,\lambda}^{\,\beta}) \left[u^{\frac {Q+2}{Q-2}} -(u_{\xi,\lambda}^{\,\beta})^{\frac {Q+2}{Q-2}}\right],
\end{align*}
which implies that $A_{\lambda}(\xi)$ must be measure zero and hence empty by the continuity of 
$u-u_{\xi,\lambda}^{\,\beta}$. This verifies \eqref{eqn:ulessthankelvin} and hence completes Step 1.

\subsection*{Step 2}

Define
\[
\underline{\lambda}(\xi) := \sup\left\{\mu > 0 : u(\zeta) \leq u_{\xi,\lambda}^{\,\beta}(\zeta),\ \forall\zeta\in B_{\lambda}(\xi),\ \forall \lambda\in (0, \mu)\right\}.
\]

\subsubsection*{Step 2.1}
There exists some $\xi_0 \in \mathbb{H}^n$ such that $\underline{\lambda}(\xi_0) < \infty$.

\begin{proof}
Suppose, for contradiction, that $\underline{\lambda}(\xi) = \infty$ for all $\xi \in \mathbb{H}^n$. By the definition of $\underline{\lambda}(\xi)$, this implies that for any $\xi \in \mathbb{H}^n$ and any $\lambda > 0$,
\[
u(\zeta) \leq u_{\xi,\lambda}^{\,\beta}(\zeta) \quad \text{for all } \zeta \in B_{\lambda}(\xi).
\]
By writing $\eta=\Phi_{\xi,\lambda}^{\, \beta}(\zeta)$, we conclude that for any $\xi \in \mathbb{H}^n$ and any $\lambda > 0$,
\[
u(\eta) \geq u_{\xi,\lambda}^{\,\beta}(\eta) \quad \text{for all } \eta \in \Sigma_{\lambda}(\xi).
\]
In view of our Calculus Lemma I (Theorem \ref{thm:lnzlem1}), $u$ must be constant. But the only constant solution to equation (1.1) is $u \equiv 0$, which leads to a contradiction.
\end{proof}

\subsubsection*{Step 2.2}
With $\xi_0$ as in Step 2.1, we have
\[
u_{\xi_0,\underline{\lambda}(\xi_0)} \equiv u \quad \text{on } \mathbb{H}^{n}.
\]

\begin{proof}
By symmetry, it suffices to prove that
\[
u_{\xi_0,\underline{\lambda}(\xi_0)}^{\,\beta}(\zeta) = u(\zeta) \quad \text{for all } \zeta \in B_{\underline{\lambda}(\xi_0)}(\xi_0).
\]

By the definition of $\underline{\lambda}(\xi_0)$, 
\[
u(\zeta) \leq u_{\xi_0,\underline{\lambda}(\xi_0)}^{\,\beta} (\zeta)  \quad \text{for all } \zeta\in  B_{\underline{\lambda}(\xi_0)}(\xi_0).
\]
On the other hand, by writing $\eta=\Phi_{\xi,\lambda}^{\, \beta}(\zeta)$, we have
\[
u(\eta) \geq u_{\xi_0,\underline{\lambda}(\xi_0)}^{\,\beta} (\eta) \quad \text{for all } \eta \in \Sigma_{\underline{\lambda}(\xi_0)}(\xi_0).
\]
Then, by the continuity $u_{\xi_0,\underline{\lambda}(\xi_0)}^{\,\beta}-u$,
\[
u_{\xi_0,\underline{\lambda}(\xi_0)}^{\,\beta} - u \equiv 0 \quad \text{on } \partial B_{\underline{\lambda}(\xi_0)}(\xi_0).
\]
Note also that
\[
-\Delta_{\! H} \left(u_{\xi_0,\underline{\lambda}(\xi_0)}^{\,\beta} - u\right) = \left(u_{\xi_0,\underline{\lambda}(\xi_0)}^{\,\beta}\right)^{\frac {Q+2}{Q-2}} - u^{\frac {Q+2}{Q-2}} \geq 0 \quad \text{on } B_{\underline{\lambda}(\xi_0)}(\xi_0).
\]
Thus, by the strong maximum principle (see, e.g., \cite{Bony69}), either
\[
u_{\xi_0,\underline{\lambda}(\xi_0)}^{\,\beta}(\zeta) = u(\zeta) \quad \text{for all } \zeta \in B_{\underline{\lambda}(\xi_0)}(\xi_0),
\]
in which case we are done, or
\[
u_{\xi_0,\underline{\lambda}(\xi_0)}^{\,\beta}(\zeta) > u(\zeta) \quad \text{for all } \zeta \in B_{\underline{\lambda}(\xi_0)}(\xi_0).
\]
We now show that the latter case leads to a contradiction.

We prove that the sphere can be moved further. Specifically, there exists $\epsilon > 0$, depending on $n$ and $u$, such that
\[
u_{\xi_0,\lambda}^{\,\beta} - u \geq 0 \quad \text{on } B_{\lambda}(\xi_0)
\]
for all $\lambda \in [\underline{\lambda}(\xi_0), \underline{\lambda}(\xi_0) + \epsilon)$.

Again, in view of the Terracini-type inequality \eqref{eqn:terracini}, it suffices to choose $\epsilon$ small enough so that
\begin{equation}\label{eqn:lessthanhalf2}
C \left\{ \int_{A_{\lambda}(\xi_0)} u^{\frac{2Q}{Q-2}} \right\}^{\frac{2}{Q}} < \frac{1}{2}
\end{equation}
holds for all $\lambda \in (\underline{\lambda}(\xi_0), \underline{\lambda}(\xi_0) + \epsilon)$.

Since $u_{\xi_0,\underline{\lambda}(\xi_0)}^{\,\beta}(\zeta) > u(\zeta)$ for all $\zeta \in B_{\underline{\lambda}(\xi_0)}(\xi_0)$, we define for any $\delta > 0$:
\[
E_{\delta} := \left\{\zeta \in B_{\underline{\lambda}(\xi_0)}(\xi_0) : u_{\xi_0,\underline{\lambda}(\xi_0)}^{\,\beta}(\zeta) - u(\zeta) > \delta\right\}, \quad
F_{\delta} := B_{\underline{\lambda}(\xi_0)}(\xi_0) \setminus E_{\delta}.
\]
Then clearly
\[
\lim_{\delta \rightarrow 0} |F_{\delta}| = 0.
\]
Note that
\begin{equation}\label{eqn:alambda1}
A_{\lambda}(\xi_0) \subset \left(A_{\lambda}(\xi_0) \cap E_{\delta}\right) \cup F_{\delta} \cup \left(B_{\lambda}(\xi_0) \setminus B_{\underline{\lambda}(\xi_0)}(\xi_0)\right).
\end{equation}

The measure of $B_{\lambda}(\xi_0) \setminus B_{\underline{\lambda}(\xi_0)}(\xi_0)$ becomes small as $\lambda$ approaches $\underline{\lambda}(\xi_0)$. We now show that the measure of $A_{\lambda}(\xi_0) \cap E_{\delta}$ can also be made arbitrarily small. For any $\zeta \in A_{\lambda}(\xi_0) \cap E_{\delta}$, we have
\[
0 < u(\zeta) - u_{\xi_0,\lambda}^{\,\beta}(\zeta) = u(\zeta) - u_{\xi_0,\underline{\lambda}(\xi_0)}^{\,\beta}(\zeta) + u_{\xi_0,\underline{\lambda}(\xi_0)}^{\,\beta}(\zeta) - u_{\xi_0,\lambda}^{\,\beta}(\zeta),
\]
so that
\[
u_{\xi_0,\underline{\lambda}(\xi_0)}^{\,\beta}(\zeta) - u_{\xi_0,\lambda}^{\,\beta}(\zeta) > u_{\xi_0,\underline{\lambda}(\xi_0)}^{\,\beta}(\zeta) - u(\zeta) > \delta.
\]
Hence,
\begin{equation}\label{eqn:alambda2}
A_{\lambda}(\xi_0) \cap E_{\delta} \subset G_{\delta} := \left\{\zeta \in B_{\lambda}(\xi_0) : u_{\xi_0,\underline{\lambda}(\xi_0)}^{\,\beta}(\zeta) - u_{\xi_0,\lambda}^{\,\beta}(\zeta) > \delta\right\}.
\end{equation}
By Chebyshev's inequality,
\[
|G_{\delta}| \leq \frac{1}{\delta} \int_{G_{\delta}} \left|u_{\xi_0,\underline{\lambda}(\xi_0)}^{\,\beta} - u_{\xi_0,\lambda}^{\,\beta}\right| \leq \frac{1}{\delta} \int_{B_{\lambda}(\xi_0)} \left|u_{\xi_0,\underline{\lambda}(\xi_0)}^{\,\beta} - u_{\xi_0,\lambda}^{\,\beta}\right|.
\]
For fixed $\delta$, the right-hand side can be made arbitrarily small as $\lambda \to \underline{\lambda}(\xi_0)$. Therefore, by \eqref{eqn:alambda1} and \eqref{eqn:alambda2}, the measure of $A_{\lambda}(\xi_0)$ can be made sufficiently small, and inequality \eqref{eqn:lessthanhalf2} follows.

We conclude that
\[
u_{\xi_0,\lambda}^{\,\beta}(\xi) \leq u(\xi) \quad \text{for all } \xi \in B_{\lambda}(\xi_0),
\]
for all $\lambda \in [\underline{\lambda}(\xi_0), \underline{\lambda}(\xi_0) + \varepsilon)$, contradicting the definition of $\underline{\lambda}(\xi_0)$.
\end{proof}

\subsubsection*{Step 2.3}
We have $\underline{\lambda}(\xi) < \infty$ for all $\xi \in \mathbb{H}^n$.

\begin{proof}
For any $\xi \in \mathbb{H}^n$, the definition of $\underline{\lambda}(\xi)$ implies that for all $\lambda \in (0, \underline{\lambda}(\xi))$,
\[
u_{\xi,\lambda}^{\,\beta}(\zeta) \leq u(\zeta) \quad \text{for all } \zeta \in \Sigma_{\lambda}(\xi).
\]
It follows that
\begin{equation}\label{eqn:takinglimit1}
\liminf_{|\zeta|_{\! H} \to \infty} \left(|\zeta|_{\! H}^{Q-2} u(\zeta)\right) \geq \liminf_{|\zeta|_{\! H} \to \infty} \left(|\zeta|_{\! H}^{Q-2} u_{\xi,\lambda}^{\,\beta}(\zeta)\right) = \lambda^{Q-2} u(\xi), \quad \forall \lambda \in (0, \underline{\lambda}(\xi)).
\end{equation}
On the other hand, by Step 2.2 we have
\begin{equation}\label{eqn:takinglimit2}
\liminf_{|\zeta|_{\! H} \to \infty} \left(|\zeta|_{\! H}^{Q-2} u(\zeta)\right) = \liminf_{|\zeta|_{\! H} \to \infty} \left(|\zeta|_{\! H}^{Q-2} u_{\xi_0,\underline{\lambda}(\xi_0)}(\zeta)\right) = \underline{\lambda}(\xi_0)^{Q-2} u(\xi_0) < \infty.
\end{equation}
Combining \eqref{eqn:takinglimit1} and \eqref{eqn:takinglimit2}, we obtain $\underline{\lambda}(\xi) < \infty$ for all $\xi \in \mathbb{H}^n$.
\end{proof}

\subsubsection*{Step 2.4}
We have the identity
\[
u_{\xi,\underline{\lambda}(\xi)}^{\,\beta} \equiv u \quad \text{on } \mathbb{H}^{n} \quad \text{for all } \xi \in \mathbb{H}^n.
\]

\begin{proof}
This follows by applying the argument in Step 2.2 to an arbitrary $\xi \in \mathbb{H}^n$.
\end{proof}

\subsection*{Step 3}

Given that $u(\zeta) \to 0$ as $|\zeta|_H \to \infty$ (see Remark \ref{rmk:lnzlem2}), and since $u$ is continuous and positive, $u$ must attain its maximum at some $\xi'=(z',t') \in \mathbb{H}^n$. Consider $U := u \circ \tau_{\xi'}$, which is also a solution of \eqref{eqn:cr_yamabe} and has its maximum at the origin. By applying Step 2.4 to $U$ with $\beta = \beta_U$ and using Theorem \ref{thm:lnzlem2}, it follows that
\[
U(z,t) = \alpha_U \left|t + i|z|^2 +i\beta_U\right|^{-\frac{Q-2}{2}}.
\]
Thus, we conclude that 
\begin{align*}
u(z,t) =& U \circ \tau_{(z',t')}^{-1} (z,t) \\
=& \alpha_U \left|t+i|z|^2-2i\langle z, z'\rangle - t' 
+ i\left|z' \right|^2 + i \beta_U \right|^{-\frac {Q-2}{2}}\\
=& \alpha_U \left|t+i|z|^2 + \langle z, \mu \rangle + \kappa \right|^{-\frac {Q-2}{2}}
\end{align*}
where
\[
\mu = 2iz'\quad \text{and} \quad
\kappa = -t' + i\left|z' \right|^2 + i \beta_U.
\]
This completes the proof.

\section{An alternative proof of Theorem D}
\label{sec:PfofMaOu}

Let $u$ be a nonnegative solution of \eqref{eqn:subcritical}. 
By an argument analogous to that in Step 1 of Section \ref{sec:PfofMainThm}, one can show that, for every $\xi \in \mathbb{H}^{n}$, there exists $\lambda_{0}(\xi)>0$ such that for all $\lambda \in (0, \lambda_{0}(\xi))$,
\[
u(\zeta) \geq u_{\xi,\lambda}^{\beta}(\zeta), \quad \forall \zeta \in \Sigma_{\lambda}(\xi).
\]

We then define
\[
\underline{\lambda}(\xi) := \sup \left\{ \mu > 0 : u(\zeta) \geq u_{\xi,\lambda}^{\beta}(\zeta),\ \forall \zeta \in \Sigma_{\lambda}(\xi),\ \forall \lambda \in (0, \mu) \right\}.
\]

\subsubsection*{Case I: $\underline{\lambda}(\xi) = \infty$ for all $\xi \in \mathbb{H}^n$}
By the definition of $\underline{\lambda}(\xi)$, this implies that for any $\xi \in \mathbb{H}^n$ and any $\lambda > 0$,
\[
u(\zeta) \geq u_{\xi,\lambda}^{\beta}(\zeta), \quad \forall \zeta \in \Sigma_{\lambda}(\xi).
\]
Using our Calculus Lemma I (Theorem \ref{thm:lnzlem1}), we conclude that $u$ is constant. However, the only constant solution of equation \eqref{eqn:subcritical} is $u \equiv 0$.

\subsubsection*{Case II: There exists some $\xi_0 \in \mathbb{H}^n$ such that $\underline{\lambda}(\xi_0) < \infty$}
The same argument from Step 2.2 in Section \ref{sec:PfofMainThm} shows that
\[
u_{\xi_0, \underline{\lambda}(\xi_0)}^{\, \beta} \equiv u \quad \text{on } \mathbb{H}^{n}\setminus \{\xi_0\}.
\]
Combined with Corollary \ref{cor:conformal_invariance}, this gives
\[
\begin{aligned}
0 & ~=~ -\Delta_{\! H} \bigl( u - u_{\xi_0, \underline{\lambda}(\xi_0)}^{\, \beta}  \bigr) \\
&=~ u^p - \left( \frac{ \underline{\lambda}(\xi_0) }{ d_{\! H} (\xi_0, \,\cdot\,) } \right)^{(Q+2) - (Q-2)p} \left(u_{\xi_0, \underline{\lambda}(\xi_0)}^{\, \beta}\right)^p \\
&=~ \left\{ 1 - \left( \frac{ \underline{\lambda}(\xi_0) }{ d_{\! H} (\xi_0, \,\cdot\,) } \right)^{(Q+2) - (Q-2)p} \right\} u^p,
\end{aligned}
\]
which implies that $u \equiv 0$ and completes the proof. 

\subsection*{Acknowledgments}
I am deeply grateful to Meijun Zhu for the many stimulating conversations we shared, from which I gained a deep understanding of the methods of moving planes and moving spheres.  I am also indebted to Guangbin Ren, Xieping Wang, Jungang Li, and Xinan Ma for their valuable comments and suggestions. Furthermore, I wish to thank Jingbo Dou for bringing reference \cite{HWZ17} to my attention.

\end{document}